\documentclass[12pt]{article}

\usepackage[utf8]{inputenc}
\usepackage[T1]{fontenc}
\usepackage[english]{babel}

\usepackage{amsmath,amssymb,amsthm}
\usepackage{authblk}
\usepackage{geometry}
\usepackage{graphicx}
\usepackage{microtype}
\usepackage{xcolor}
\usepackage{bbm}
\usepackage{mathrsfs}
\usepackage{enumitem}
\usepackage{hyperref}
\usepackage{mathtools}
\usepackage{fontawesome5}
\usepackage{tikz}
\usepackage{mathrsfs}
\usepackage[numeric,initials,nobysame,msc-links,abbrev]{amsrefs}

\renewcommand{\MR}[1]{}

\renewcommand{\le}{\leqslant}
\renewcommand{\leq}{\leqslant}
\renewcommand{\ge}{\geqslant}
\renewcommand{\geq}{\geqslant}

\newcommand\renorm{\cR}

\usetikzlibrary{calc,decorations.markings,arrows.meta}
\tikzset{xarrow/.style={postaction={decorate},decoration={
        markings,
        mark=at position .6 with {\arrow[#1]{Stealth[]}}}},
    yarrow/.style={postaction={decorate},decoration={
        markings,
        mark=at position .7 with {\arrow[#1]{Stealth[sep]Stealth[]}}}},
    aarrow/.style={double distance=0.5pt}
}
\newcommand\closerel[3]{\draw[xarrow] ($#3 - #2 + #1$) -- #1;
  \draw[yarrow] ($#3 - #2 + #1$) -- #3;
}
\newcommand\straightrel[2]{\coordinate (straightrel) at ($0.5*#1 + 0.5*#2 + (-135:0.3)$);
  \draw[xarrow] (straightrel) -- #1;
  \draw[yarrow] (straightrel) -- #2;
}

\def\N{\mathbb N}
\def\Z{\mathbb Z}

\DeclareMathOperator{\supp}{supp}
\DeclareMathOperator{\argmin}{argmin}
\DeclareMathOperator{\var}{Var}

\newcommand{\cC}{\ensuremath{\mathcal C}}
\newcommand{\cD}{\ensuremath{\mathcal D}}
\newcommand{\cE}{\ensuremath{\mathcal E}}

\newcommand{\cG}{\ensuremath{\mathcal G}}

\newcommand{\cI}{\ensuremath{\mathcal I}}

\newcommand{\cL}{\ensuremath{\mathcal L}}

\newcommand{\cR}{\ensuremath{\mathcal R}}

\newcommand{\cX}{\ensuremath{\mathcal X}}

\newcommand{\cZ}{\ensuremath{\mathcal Z}}

\newcommand{\bbE}{{\ensuremath{\mathbb E}} }
\newcommand{\bbF}{{\ensuremath{\mathbb F}} }

\newcommand{\bbP}{{\ensuremath{\mathbb P}} }

\newcommand{\bbR}{{\ensuremath{\mathbb R}} }

\newcommand{\bbT}{{\ensuremath{\mathbb T}} }

\newcommand{\bbZ}{{\ensuremath{\mathbb Z}} }

\newtheorem{theorem}{Theorem}
\newtheorem{prop}[theorem]{Proposition}
\newtheorem{lemma}[theorem]{Lemma}
\newtheorem{claim}[theorem]{Claim}
\newtheorem{corollary}[theorem]{Corollary}
\theoremstyle{definition}
\newtheorem{definition}[theorem]{Definition}

\numberwithin{theorem}{section}

\newcommand{\1}{\mathbb{I}}
\newcommand{\trel}{T_{\mathrm{rel}}}
\newcommand{\dtv}{d_{\mathrm{TV}}}
\newcommand{\bone}{\mathbf{1}}
\newcommand{\bzero}{\mathbf{0}}

\providecommand{\symdiff}{\mathbin{\mathpalette\triangle\relax}}

\newcommand{\lon}{{\color{black}\faLightbulb}}
\newcommand{\loff}{{\faLightbulb[regular]}}
\newcommand{\soff}{{\faToggleOff}}

\newcommand{\son}{{\faToggleOn}}

\title{Super-Arrhenius relaxation of the triangular plaquette model in any dimension}
\author[1,2]{Laurent Bartholdi}
\author[1]{Ivailo Hartarsky}
\author[2]{Ivan Mitrofanov}
    \affil[1]{Universit\'e Lyon 1, Centrale Lyon, INSA Lyon, Universit\'e Jean Monnet, CNRS, ICJ UMR5208, 69622 Villeurbanne, France, \texttt{\{laurent,hartarsky\}@math.univ-lyon1.fr}}
\affil[2]{Université de Genève, \texttt{\{Laurent.Bartholdi,Ivan.Mitrofanov\}@unige.ch}}

\begin{document}
\maketitle

\begin{abstract} 
Consider the following plaquette model from statistical physics: a lamp lies at every vertex of the triangular lattice and a switch lies at every even vertex of the (bipartite) dual hexagonal lattice. Each switch toggles the three lamps on its face. The energy of a configuration is the number of ON lamps.

For the Glauber dynamics associated with the Gibbs measure defined by this Hamiltonian at any inverse temperature $\beta>0$, we show that, in any dimension $d\ge 2$, the infinite volume relaxation time satisfies
\[e^{\beta^2/C}/C \le \trel\le Ce^{e^{C\beta}}\]
for some $C>0$. Our result entails that the Gibbs measure is unique. The $e^{\beta^2}$ scaling was conjectured by Newman and Moore in 1999 and matches the behaviour of supercritical rooted kinetically constrained models such as the East model, thus recovering fragile glass phenomenology in the absence of kinetic constraints. More precisely, we show that, on a torus of side length $2^k$, when $\beta\to\infty$ and $k/\beta\to0$, we have $\trel=e^{2\beta k(1+o(1))}$. Quite surprisingly, however, we also prove that, on non-periodic finite domains of size $n\le e^{\beta/C}$ for large $C>0$, we have the much larger asymptotics $\ln\trel=\beta n^{\Theta(1)}$.

The main ingredients of the proofs are new results in extremal and enumerative combinatorics and rely on renormalisation ideas for the dynamics and its groundstates also known as the Ledrappier subshift. We note consequences of our results to geometric group theory (more precisely to the complexity of the word problem for the Baumslag finitely presented group) and to ergodic theory.
\end{abstract}

\noindent\textbf{MSC2020:} 60K35; 05A16; 05D99; 20F65; 37A25; 37B10; 60C05; 82C20
\\
\textbf{Keywords:} triangular plaquette model; Newman-Moore model; relaxation time; Ledrappier subshift; Isodiametral function; Baumslag group

\tableofcontents
\section{Introduction}
\label{sec:intro}
One of the great achievements of statistical physics is the development of discrete, finite models whose behaviour closely follows, and ideally explains, physical phenomena. Foremost among these, the Ising model~\cites{Lenz20,Ising25} in dimension $d\ge 2$ exhibits phase transitions as manifested in magnetic bodies: there is a $\pm1$ \emph{spin} at every lattice vertex, and the system's energy is computed from spin values and nearest-neighbour interactions. In a dynamical perspective, the spins evolve independently at random so as to minimise the energy, a \emph{temperature} parameter dictating the likelihood of frustrated (${+}{-}$) bonds.

Several paradigms have been proposed to explain the more complex behaviour of glassy materials in condensed matter physics (see e.g.~\cite{Arceri21} for an overview of this vast domain). One hallmark of such materials is a \emph{super-Arrhenius} divergence of relaxation time scales at high inverse temperature $\beta>0$. These time scales may be viewed as quantifying the exponential decay rate of spin correlations with time or the viscosity of the system. For instance, a quadratic scaling $\trel\approx e^{\beta^2}$ can supply a good quantitative fit for the behaviour experimentally measured in so-called fragile glass materials. In any case, the emergence of energy barriers whose size diverges when approaching the possibly degenerate phase transition point is to be expected.

One such paradigm, introduced by Newman and Moore~\cite{Newman99} in 1999 (a similar model goes back to Baxter and Wu~\cite{Baxter73} from 1973) can be viewed as a natural generalisation of the one-dimensional Ising model to $d$ dimensions with $(d+1)$-body interactions. We concentrate on their \emph{triangular plaquette model}, and present it informally (see Section~\ref{subsec:model} for a more formal definition). Consider $\pm1$ spins arranged in a triangular lattice $\Lambda$, in which the energy is determined by the number of frustrated upwards-pointing triangles; more precisely, the energy of the system with spins $\sigma\in\{\pm1\}^\Lambda$ is written as a Hamiltonian
\[H(\sigma)=-\sum_{\Delta\subset\Lambda}\prod_{x\in\Delta}\sigma_x\]
(the sum over upwards pointing triangles $\Delta$ is infinite, but only relative energies will be considered). The probability of a spin $\sigma$ is given by the Gibbs measure
$\mu(\sigma)\propto\exp(-\beta H(\sigma))$. Spins may be made to evolve under \emph{Glauber dynamics}: each site of $\Lambda$ is equipped with a standard Poisson clock; when the clock at site $x$ rings, $\sigma$ is updated to $\sigma'$ with $\sigma'_x=-\sigma_x$ with probability $1/(1+\exp(\beta(H(\sigma')-H(\sigma))))$. This leads to numerical estimates on the relaxation time, and experimental support for their glassy (super-Arrhenius) behaviour, for which we refer to~\cites{Jack05,Garrahan02,Jack05a,Newman99}, and \cite{Inack22} for a more recent study.

\begin{figure}
\centering\begin{tikzpicture}[x={(1cm,0cm)},y={(0.5cm,0.866cm)},scale=1.2,every circle/.style={scale=1/1.2,radius=3pt}]
  \foreach \x in {0,...,6} {
    \draw[gray] (\x, 0)--(0,\x);
    \draw[gray] (\x,0)--(\x,6-\x)--(0,6-\x);}
  \foreach \x in {0,...,6} {
  	\draw (\x,0) node{\loff};}
  \foreach \x in {0,...,5} {
  	\draw (0.33+\x,0.33) node{\soff};}
  \foreach \y in {1,...,6} {
  	\draw (0,\y) node{\loff};}
  \foreach \y in {1,...,5} {
  	\draw (0.33,0.33+\y) node{\soff};}
  \foreach \x in {1,...,5} {
  	\draw (\x,6-\x) node{\loff};}
  \foreach \x in {1,...,4} {
  	\draw (0.33+\x,5.33-\x) node{\soff};}

  	\draw (1.33,2.33) node{\soff};
  	\draw (1.33,3.33) node{\soff};
  	\draw (2,1) node{\loff};
  	\draw (3,1) node{\loff};
  	\draw (2,2) node{\loff};
  	\draw (3,2) node{\loff};
  	\draw (1,3) node{\loff};
  	\draw (1,4) node{\loff};

  	\draw (1.33,1.33) node{\son};
  	\draw (2.33,2.33) node{\son};
  	\draw (2.33,1.33) node{\son};
  	\draw (1,1) node{\lon};
  	\draw (1,2) node{\lon};
  	\draw (2,3) node{\lon};
  	\draw (3.33,1.33) node{\son};
  	\draw (4,1) node{\lon};
    \end{tikzpicture}
    \caption{A configuration of lamps and switches on the triangular lattice. This is the only figure in which the triangular lattice is actually triangular; later on, we will use the standard orthogonal simplex.}
    \label{fig:lamps}
\end{figure}

An equivalent description of the model, also going back to \cite{Newman99}, is as follows, see Figure~\ref{fig:lamps}. The spins are switches ($\pm1$) at all vertices of the triangular lattice $\Lambda$; there are lamps at another triangular lattice, consisting of even positions of the dual lattice to $\Lambda$, which are ON when the product of the neighbouring switches is $-1$ and OFF otherwise; so toggling a switch toggles simultaneously three lamps. Each ON lamp consumes an amount $\exp(2\beta)$ of energy. There are numerous popular games in which lamps are controlled by complicated configurations of switches, see e.g.~\cite{Marcarini26} for a recent account. The triangular plaquette model studied in this paper corresponds to a directed (North-East) version of the `Lights Out' game or its $3\times 3$ predecessor, the `Magic square' electronic game, from 1978. In general, the interest and difficulty of these games is related to the difficulty of finding canonical paths between given configurations.

Informally, our results are as follows (see Section~\ref{subsec:results} for formal statements). Firstly, if we work on a torus of side length $2^k$ with $k/\beta\to0$ and $\beta\to\infty$, then $\ln\trel\sim 2 k\beta$, proving the prediction of \cite{Newman99}, where precisely this setting was considered (namely, $k\to\infty$ after $\beta\to\infty$). Secondly, if we work in infinite volume, for any dimension $d\ge 2$, suitably large $C>0$ and any $\beta>0$ we have
\[e^{\beta^2/C}/C\le \trel\le Ce^{e^{C\beta}}.\]
While these bounds may seem distant, we show that, strikingly, in finite volume of size $n$ without periodic boundary condition, the upper bound is close to the truth for $\ln n/\beta$ small, while we expect the lower bound to be essentially sharp when $\ln n/\beta$ is large. The latter would imply $\trel=e^{\Theta(\beta^2)}$ in infinite volume, despite the much slower relaxation in finite volume.

\subsection{Model and notation}\label{subsec:model}
Let $d$ be a positive integer. We will work on $\bbZ^d$ rather than the triangular lattice (and higher dimensional generalisations) for convenience of notation, but this is equivalent up to a linear transformation of the lattice. We denote the canonical basis of $\bbR^d$ by $e_1,\dots,e_d$. We introduce the (simplex) \emph{plaquette}
\begin{equation}
\label{eq:def:Td}
T_d=\{0,e_1,e_2,\dots,e_d\}.
\end{equation}
For any finite set $\Lambda$, let $\Omega_\Lambda=\{1,-1\}^\Lambda$ and $\Omega=\Omega_{\bbZ^d}$. We denote by $\bone_\Lambda\in\Omega_\Lambda$ the constant configuration and omit the subscript $\Lambda$ when it is clear from the context. For $\sigma\in\Omega_\Lambda$ and $x\in\Lambda$, we denote by $\sigma_x$ the value of $\sigma$ at site $x$. For $V\subseteq\Lambda$ and $\sigma\in\Omega_\Lambda$, we write $\sigma_V\in\Omega_V$ for the restriction of $\sigma$ to $V$. 
For $\sigma\in\Omega_\Lambda$ and finite $V\subseteq\Lambda$, we write $[\sigma]_V=\prod_{x\in V}\sigma_x$. 
For disjoint $\Lambda,\Lambda'\subseteq \bbZ^d$ and $\sigma\in\Omega_\Lambda,\sigma'\in\Omega_{\Lambda'}$, we write $\sigma\cdot\sigma'\in\Omega_{\Lambda\cup\Lambda'}$ for the configuration equal to $\sigma$ on $\Lambda$ and to $\sigma'$ on $\Lambda'$. For $x\in\Lambda$ and $\sigma\in\Omega_\Lambda$, we denote by $\sigma^x\in\Omega_\Lambda$ the configuration such that $\sigma^x_x=-\sigma_x$ and $\sigma_{\Lambda\setminus\{x\}}=\sigma_{\Lambda\setminus\{x\}}^x$, that is, the result of flipping the state of $x$. A function $f\colon\Omega\to\bbR$ is \emph{local}, if there exists a finite $V\subset\bbZ^d$ such that for any $\sigma,\sigma'\in\Omega$ such that $\sigma_V=\sigma'_V$, we have $f(\sigma)=f(\sigma')$. We call the smallest set $V$ as above the support of $f$ and denote it by $\supp f$. We denote the integral of a function $f$ with respect to a measure $\mu$ by $\mu(f)$.

For finite $\Lambda$ and $\tau\in\Omega_{\bbZ^d\setminus \Lambda}$ (viewed as a \emph{boundary condition}) and $\sigma\in\Omega_\Lambda$, we define the \emph{Hamiltonian}, \emph{partition function} and \emph{Boltzmann measure} by
\begin{align}
\label{eq:def:H}
    H_\Lambda^\tau(\sigma)&{}=-\sum_{x\in\Lambda-T_d}[\sigma\cdot\tau]_{x+T_d},&Z_\Lambda^\tau&{}=\sum_{\sigma\in\Omega_\Lambda}e^{-\beta H_\Lambda^\tau(\sigma)},&\mu_{\Lambda}^{\tau}(\sigma)&{}=\frac{e^{-\beta H_\Lambda^\tau(\sigma)}}{Z_\Lambda^\tau},
\end{align}
keeping the \emph{inverse temperature} parameter $\beta>0$ implicit. When $\Lambda=\{x\}$ is a singleton, we write $x$ instead of $\Lambda$ in the above notation. The degenerate cases $\beta=0$ and $\beta=\infty$ (which we do not consider unless otherwise stated) correspond to $\mu_\Lambda^\tau$ being the uniform measure on $\Omega_\Lambda$ and on the \emph{groundstates} $\argmin H_\Lambda^\tau$, respectively. Notice that, for any local $f$ and finite $\Lambda$, $\mu_\Lambda(f)$ is a function from $\Omega_{\bbZ^d\setminus\Lambda}$ to $\bbR$, which we identify with a function from $\Omega$ to $\bbR$ by projection. We denote the variance associated to $\mu_\Lambda^\tau$ by $\var_\Lambda^\tau:f\mapsto\mu_\Lambda^\tau(f^2)-(\mu_\Lambda^\tau(f))^2$.

We denote by $\mu$ any \emph{infinite volume Gibbs measure}, that is, any measure on $\Omega$ such that, for all finite $\Lambda$ and local function $f$ with $\supp f\subseteq\Lambda$, we have
\begin{equation}
\label{eq:Gibbs:DLR}
\int \mu_\Lambda^{\sigma_{\bbZ^d\setminus\Lambda}}(f)\mathrm d\mu(\sigma)=\mu(f).\end{equation}
It follows from our results that $\mu$ is unique for any $\beta>0$, but this is not clear a priori. In order to lighten notation, we set 
\[\mu_\Lambda(f)\coloneqq\mu_\Lambda^{\sigma_{\bbZ^d\setminus\Lambda}}(f),\]
whenever the configuration $\sigma$ is clear from the context, e.g.\ \eqref{eq:Gibbs:DLR} reads $\mu(\mu_\Lambda(f))=\mu(f)$.

The \emph{Glauber dynamics} in a domain $\Lambda\subseteq\bbZ^d$ with boundary condition $\tau\in\Omega_{\bbZ^d\setminus\Lambda}$ is the (continuous time) Markov process whose \emph{generator} acts on local functions $f\colon\Omega_\Lambda\to\bbR$ via 
\[\cL_\Lambda^\tau f(\sigma)=\sum_{x\in\Lambda}\left(\mu_x^{\tau\cdot\sigma_{\Lambda\setminus\{x\}}}(f)-f\right)(\sigma).\]
Equivalently, one may view this process via the following more intuitive \emph{graphical construction}. Each site of $\Lambda$ is equipped with a Poisson clock, which rings at independent intervals of time with exponential distribution of mean 1. When the clock at site $x$ rings, the current configuration $\sigma$ is updated to either $\sigma$ or $\sigma^x$ with probability such that $\sigma_x$ has distribution $\mu_x^{\tau\cdot\sigma_{\Lambda\setminus \{x\}}}$ after the update. It is classical that such finite range dynamics are well-defined in infinite volume (see e.g.\ \cite{Liggett05}) and the generator $\cL=\cL_{\bbZ^d}$ extends to a self-adjoint operator in $L^2(\Omega,\mu)$.

The \emph{Dirichlet form} associated to the generator is given by
\begin{equation}
\label{eq:def:D}
\cD_\Lambda^\tau(f)=-\mu_\Lambda^\tau(f\cL_\Lambda^\tau f)=\sum_{x\in\Lambda}\mu_\Lambda^\tau(\var_x(f)),\end{equation}
where, as before, $\var_x$ denotes the variance with respect to $\mu_{\{x\}}^{\tau\cdot\sigma_{\bbZ^d\setminus\Lambda}}$ with $\sigma$ the configuration over which $\mu_\Lambda^\tau$ runs. We may now define the \emph{relaxation time} as
\begin{equation}
\label{eq:def:trel}
\trel^{\Lambda,\tau}=\left(\inf\left\{\frac{\cD_\Lambda^\tau(f)}{\var_\Lambda^\tau(f)}:f\text{ local},\supp f\subseteq\Lambda,\var_\Lambda^\tau(f)\neq 0\right\}\right)^{-1}\in[0,\infty]\end{equation}
and $\trel=\trel^{\bbZ^d}$. This fundamental quantity has many equivalent definitions, in particular its inverse is known as the spectral gap of $\cL_\Lambda^\tau$. It governs the exponential rate at which correlations decay in the system. We direct the reader to \cite{Martinelli99} for background on Glauber dynamics though we will not assume familiarity with this classical reference.

All the above quantities are also apply with periodic boundary condition as follows. For positive integer $n$, let 
\[\bbT_n=(\bbZ/n\bbZ)^d\]
denote the torus of dimension $d$ and (linear) size $n$. In this case, no boundary condition needs to be specified, so we write $\mu_{\bbT_n}$, $\cL_{\bbT_n}$ $\cD_{\bbT_n}$, $\trel^{\bbT_n}$ for the corresponding Boltzmann measure, generator, Dirichlet form and relaxation time defined as above.

We denote by $\Lambda_{d,n}$ the simplex of side length $n$:
\begin{equation}\label{eq:def:Lambda:dn}
  \Lambda_{d,n}\coloneqq\left\{(x_1,\dots,x_d)\in \N^d \mid x_1+\dots+x_d\leqslant n\right\}\subset \Z^d.
\end{equation}
(throughout this text $\N$ denotes the natural numbers and contains $0$) and we call \emph{simplex} a translate $x+\Lambda_{d,n}$ for some $x\in\bbZ^d$.

We shall consider at various moments \emph{switch} or \emph{spin} configurations, that take values in $\{\pm1\}$, and \emph{lamp} configurations, that take values in $\{0,1\}$ or are represented as subsets via their characteristic function. Even though both are considered on domains $\Lambda_{d,n}$, there should be no confusion.

\subsection{Results}
\label{subsec:results}

We are now ready to state our main results. We begin with those pertaining to infinite-volume configurations.
\begin{theorem}[Infinite volume relaxation time]
\label{th:main}
For any $d\ge 2$ there exists $C=C(d)\ge1$ such that, for all inverse temperature $\beta>0$, we have 
\[e^{\beta^2/C}/C\le \trel\le Ce^{e^{C\beta}}
\]
and the Gibbs state $\mu$ is unique.
\end{theorem}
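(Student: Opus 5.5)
The proof splits into three parts: the lower bound via a test function with a bottleneck, the upper bound via a renormalisation/block-dynamics argument, and uniqueness as a consequence of the finite upper bound.

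\textbf{Lower bound.} I would work in lamp variables. Under $\mu$, the lamps $\eta_x=\bone\{[\sigma]_{x+T_d}=-1\}$ are i.i.d.\ Bernoulli with parameter $p=1/(1+e^{2\beta})$, since the change of variables from switches to lamps is triangular (upper-triangular in the North-East order). Flipping the spin at $y$ toggles the $d+1$ lamps at $y-T_d$.

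The plan is to construct a local test function $f$ that detects whether some large simplex region has been ``reached'' by a defect. A concrete choice is the spin $f(\sigma)=\sigma_0$ conditioned on a lamp-free neighbourhood. By the Ledrappier (Pascal triangle mod 2) structure, changing $\sigma_0$ while keeping a simplex of side $2^k$ free of lamps, with $2^k\approx e^{\beta}$ (the typical distance between lamps), requires passing through configurations with at least about $k$ lamps simultaneously ON. This is the combinatorial ``energy barrier'' lemma, proved by induction on $k$ via self-similarity of Sierpiński patterns, in the spirit of the East model.

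The barrier has cost $e^{-2\beta k}=e^{-\Theta(\beta^2)}$. A standard bottleneck (Cheeger-type) estimate, $\cD(f)\le \mu(\text{boundary set})$ against $\var(f)$ of order $1$ after localising, then gives $\trel\ge e^{\beta^2/C}/C$.

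\textbf{Upper bound.} I would use a renormalisation/block dynamics scheme. Choose a scale $L=e^{C\beta}$. With probability close to $1$, a block of side $L$ contains an ``empty'' region playing the role of a facilitating vacancy. Within a simplex of side $n$ with fixed boundary condition, the finite-volume relaxation time is at most $e^{O(\beta n^{d})}$ by a crude canonical-path bound: each configuration is connected to any other by flipping sites in lexicographic order, with at most $|\Lambda|$ lamps on along the way. At $n=L$ this yields $e^{e^{C\beta}}$.

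One then compares the infinite-volume dynamics to a block dynamics on overlapping boxes of size $L$. Because lamps are i.i.d., one can exploit conditional independence: conditioning on lamps outside a box makes the inside a product measure in lamp coordinates, and a spin variable depends only on the lamps in its backward cone. The key step is then a bisection/East-like argument, in the style of Martinelli's block techniques for kinetically constrained models, giving a gap of the block chain bounded below uniformly in volume; combining with the block relaxation time $e^{e^{C\beta}}$ gives the claim.

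\textbf{Uniqueness and main obstacle.} Uniqueness follows because a positive spectral gap for the infinite-volume generator with respect to any Gibbs $\mu$ forces $\mu$ to be the unique invariant measure, given that finite-volume estimates are uniform in boundary condition. Alternatively, one can use directly that the lamp field being i.i.d.\ determines the spins: $\sigma_0$ is a function of lamps in the backward cone plus the tail, and the tail effect vanishes.

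I expect the main obstacle to be the combinatorial barrier lemma for the lower bound, namely showing that every path must carry $\gtrsim k$ simultaneous lamps at scale $2^k$, in every dimension $d$. For the upper bound, the difficulty is making the block comparison uniform without any kinetic constraint.
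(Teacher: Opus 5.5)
\textbf{Upper bound and uniqueness.} You have the right outer shell for the upper bound: the crude bound $\trel^{-\Lambda_{d,N},\tau}\le e^{O((\beta+1)N^d)}$ (Lemma~\ref{lem:trivial}) combined with $\trel\le\trel^{(N)}\sup_\tau\trel^{-\Lambda_{d,N},\tau}|\Lambda_{d,N}|$. But nothing in your proposal bounds the relaxation time $\trel^{(N)}$ of the block dynamics, and that is where the difficulty lies. There is no kinetic constraint here, so a ``facilitating vacancy'' and an East-type bisection have nothing to act on. The real obstruction is that the block Gibbs measure depends on the boundary condition. Your remark that conditioning on the lamps outside a box leaves a product measure inside is true for $\pi$ but irrelevant, because block dynamics resample \emph{spins} given the spins outside. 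Given those spins, the lamps of $\Lambda-T_d$ are confined to a coset of the admissible subspace (for the $+$ boundary this is Lemma~\ref{lem:admissible:preimage}). So the block law is $\pi$ conditioned on order $N^{d-1}$ linear constraints, not a product measure.

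What you need is a proof that these constraints are felt only near the boundary once the scale exceeds $e^{C\beta}$. The paper obtains this from two ingredients. The first is the high-temperature expansion $\sup_\tau|\cZ^\tau_{-\Lambda_{d,n}}-1|\le G_{d,n+1}(\tanh\beta)-1$. The second is the enumerative estimate $G_{d,n}(t)\le 1+n^{-s}$ for $t\le 1-1/m$, $n\ge m^s$ (Theorem~\ref{th:upper}, proved by a recursion over binary layers). Via Lemma~\ref{lem:measures}, these make a path coupling of the simplex-block dynamics contracting. Finite speed of propagation then gives uniform $\ell^\infty$ exponential ergodicity, hence $\trel^{(N)}\le 4d^d/N^d$ \emph{and} uniqueness.

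Your uniqueness arguments do not replace this. A spectral gap in $L^2(\mu)$ does not exclude other Gibbs measures, which are invariant and may be singular with respect to $\mu$. Uniformity of your crude bound in $\tau$ says nothing about how the measure depends on $\tau$. ``The i.i.d.\ lamps determine the spins'' is circular, for two reasons. First, $\sigma_0$ is not a function of the lamps, since $\ker\Phi$ is the Ledrappier subshift. Second, the i.i.d.\ property of the lamps under an arbitrary Gibbs measure is exactly what the paper deduces \emph{from} uniqueness (Lemma~\ref{lem:Phi:mu:pi}). Your triangular change of variables needs a boundary to start the recursion, which infinite volume lacks.

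\textbf{Lower bound.} Your lower bound has the paper's architecture (i.i.d.\ lamps, a barrier growing logarithmically in the scale, an indicator test function), but three points fail as written.

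First, the scale. With radius $2^k\approx e^{\beta}$ the box $B$ holds $\Theta(1)$ lamps when $d=2$, and many more when $d\ge 3$. Then $\pi(|\omega\cap B|\ge k)\approx (p|B|)^k/k!$, which is nowhere near $e^{-2\beta k}$: you dropped the entropy factor $\binom{|B|}{k}$. You need $p|B|\le e^{-c\beta}$, for example $|B|\approx e^{\beta}$, radius $e^{\beta/d}$ and $k\approx\beta/(d\ln 2)$. This gives $(p|B|)^{k-d}=e^{-\Theta(\beta^2)}$.

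Second, $\sigma_0$ times the indicator of a lamp-free box is not a usable test function. A single flip anywhere inside the box already changes it, so its Dirichlet form is of order at least $|B|e^{-2(d+1)\beta}$, and you only get an Arrhenius bound. The right choice is the indicator of the set of configurations from which some chain keeping few lamps in $B$ reaches a fixed reference state. This function is local because chains are unconstrained outside $B$.

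Third, the barrier lemma is the heart of the matter, and ``induction via self-similarity'' does not produce it. The paper's parity map $\renorm$ on inverted plaquettes $K_x=2x-T_d$ sends chains to chains. By itself, however, it only transfers the bound from scale $k-1$ to scale $k$ with no gain. The extra lamp per dyadic scale comes from an extremal analysis of minimal good chains and critical configurations, with an additional argument for odd $d$.

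Your variant of the barrier (flipping $\sigma_0$ between lamp-free states) also needs $0$ deep inside the region, with lamps counted in a box around it. If $0$ lies on a face of the lamp-free simplex, flipping the spins along that face one at a time keeps at most one lamp on inside it.
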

From the physics perspective, the lower bound establishing slowness of the dynamics is the most important and matches the prediction of Newman and Moore~\cite{Newman99}. The fact that $\trel<\infty$ when $d=2$ was established in~\cite{Chleboun17}, while a quantitative upper bound of order $e^{e^{C\beta}}$ follows immediately from that work by~\cite{Martinelli99}*{Theorem~3.8 and Proposition~4.10}, since strong spatial mixing is known to imply finite relaxation time~\cite{Martinelli94a} (see \cite{Ott25} for more recent work on mixing conditions in two dimensions). The approach of \cite{Chleboun17} does not generalise to higher dimension. The uniqueness of Gibbs state implies that the model does not undergo a phase transition.

Turning to finite volume, we start with (our formal interpretation of) the conjecture of \cite{Newman99}*{Figure~3 and (13)} concerning tori of size $2^k$ with subcritical size.
\begin{theorem}[Subcritical asymptotics for dyadic tori]
\label{th:2k:asymptotics}
    In dimension $d=2$, we have
  \[\lim_{\substack{\beta\to \infty\\k/\beta\to0}}\frac{\ln\trel^{\bbT_{2^k}}}{k\beta}=2.\]
\end{theorem}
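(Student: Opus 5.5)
The plan is to reduce the relaxation time in this low-temperature regime ($\beta\to\infty$, $k=o(\beta)$) to an energy-barrier computation. The identification $\ln\trel^{\bbT_{2^k}}=\beta\,(\Gamma+o(1))$ is standard, with $\Gamma$ the minimal (over the worst pair of groundstates) maximal excess energy along nearest-neighbour flip paths. The lower bound comes from a bottleneck or Dirichlet-form test function. The upper bound comes from a canonical path argument, since the state space has only $2^{4^k}$ elements and the combinatorial prefactors are $e^{O(4^k)}$. One caveat is that $e^{O(4^k)}$ need not be $e^{o(k\beta)}$, so the upper bound must be run more cleverly than a crude path count. I would instead run it through a hierarchical/renormalised canonical path whose congestion only costs factors $e^{O(k)}$ per level, and so $e^{O(k^2)}=e^{o(k\beta)}$ in total. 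The target is therefore $\Gamma=2k$, measured in units where flipping a lamp ON costs energy $2$, i.e.\ a barrier of $k$ simultaneously ON lamps.

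\textbf{Groundstates and the renormalisation.} On $\bbT_{2^k}$ the groundstates are the spin configurations with all lamps OFF, i.e.\ $\sigma_{x}\sigma_{x+e_1}\sigma_{x+e_2}=1$ for all $x$. Over $\bbF_2$ this is the Ledrappier/Pascal rule $s_{x+e_2}=s_x+s_{x+e_1}$. Because $(1+X)^{2^k}=1+X^{2^k}$, the first row is free, so there are $2^{2^k}$ of them. The key structural fact I would use is the renormalisation map. Restricting a configuration to the sublattice $2\bbZ^2$ and using $(1+X)^2=1+X^2$ shows that lamp configurations at scale $2$ are sums of those at scale $1$. Concretely, flipping a switch creates an upward triangle of $3$ ON lamps, and groups of such moves at scale $2^j$ realise a scale-$2^j$ triangle.

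\textbf{Upper bound, $\Gamma\le 2k$.} I would build by induction on $j$ a flip path that creates the configuration whose only ON lamps are the three corners of a triangle of side $2^j$, starting from a configuration with a single extra defect, while never exceeding $j+O(1)$ simultaneous ON lamps. This is the East-like hierarchical construction: a defect at scale $2^j$ is built from scale-$2^{j-1}$ defects, and one lamp stays ON as a "facilitating" marker at each level. This mirrors the East model's $\log_2$ barrier. Moving between any two groundstates amounts to toggling one row value, which is realised by a triangle of side $2^k$ wrapping around, whose corners coincide and cancel. That gives a barrier of $k+O(1)$ ON lamps, i.e.\ energy $2k+O(1)$. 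Combining with the hierarchical canonical-path congestion bound gives $\ln\trel\le 2k\beta(1+o(1))$.

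\textbf{Lower bound, $\Gamma\ge 2k$, the main obstacle.} This requires an extremal combinatorial statement: any path between two distinct groundstates must pass through a configuration with at least $k$ ON lamps. I would first try induction via renormalisation. Take a path and consider the first time a "scale-$2^{j}$ obstruction" is crossed. Projecting the lamp set onto the coarse lattice, one must show that the number of ON lamps needed to connect the relevant sets increases by one per dyadic level, in analogy with the East combinatorial lemma of Chung–Diaconis–Graham. The delicate point is that lamps are not independent coordinates: lamp configurations with fewer than $k$ ON lamps must be shown to be "confined" to a region of diameter below $2^k$, so that they cannot wrap around the torus. I would prove confinement by a lemma stating that $m$ ON lamps can only be produced by switch sets supported in a set of diameter at most $2^{m}$, up to constants. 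Given that lemma, the set $A$ of configurations reachable from $\bone$ through configurations with fewer than $k$ lamps ON (energy below $2(k-1)$) excludes the other groundstates. The test function $f=\mathbbm 1_A$ then has variance bounded below by a constant times $2^{-2^k}$ relative weight, while the Dirichlet form is at most $e^{-2k\beta}$ times the same weight. This yields $\trel\ge e^{2k\beta(1-o(1))}$.
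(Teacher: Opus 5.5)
There is a genuine gap, and it begins with the energy landscape. On $\bbT_{2^k}$ the first row is not free. For the shift $S$ on $\bbZ/2^k\bbZ$ one has $(1+S)^{2^k}=1+S^{2^k}=0$, so the wrap-around condition $(1+S)^{2^k}r=r$ forces $r=0$, and $\bone$ is the \emph{unique} groundstate. Equivalently, the spin-to-lamp map is a bijection (Lemma~\ref{lem:Phi:bij}), and the equilibrium lamp field is i.i.d.\ Bernoulli with parameter $p=1/(1+e^{2\beta})$. Your wrapped triangle of side $2^k$ does not cancel either: its three coinciding corners sum to a \emph{single} ON lamp, as in Figure~\ref{fig:Sierpinski}.

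Consequences for your argument:
\begin{itemize}
\item Your $\Gamma$, defined through pairs of groundstates, is not the relevant quantity.
\item The set $A$ has no other groundstate to exclude, and the variance bound of order $2^{-2^k}$ has no basis. As written, your lower bound yields nothing.
\end{itemize}
The slow mode is the removal of an isolated defect, which is a strict local minimum at energy $2$. The paper tests with the indicator of configurations linked to a single ON lamp at the origin by chains that keep at most $k-O(1)$ lamps ON in a box $B$ of side $\asymp 2^k$. Its variance is $\gtrsim p$ and its Dirichlet form is $\lesssim (p|B|)^{k-O(1)}$. This gives $e^{2\beta k(1-o(1))}$, since $|B|^k=e^{O(k^2)}$ and $k=o(\beta)$.

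Even after this correction, the combinatorial core is missing. You need that a chain starting from a single ON lamp and clearing a ball of radius $\asymp 2^k$ has, at some time, more than $k$ ON lamps in that ball (Theorem~\ref{thm:lower}). Your confinement lemma is false as a static statement. The large plaquette $\{0,2^je_1,2^je_2\}$ has only three ON lamps, yet its unique finitely supported switch preimage is a Sierpi\'nski gasket of diameter $2^j-1$.

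The needed statement is about chains, and the paper proves it as follows:
\begin{itemize}
\item It uses the lamp renormalisation $\renorm(W)(x)=\sum_{p\in K_x}W(p)$ over the disjoint inverted plaquettes $K_x=2x-T_d$.
\item This map sends chains to chains (Lemma~\ref{lem:renorm}) and satisfies $|\renorm(W)\cap U_{k-1}|\le|W\cap U_k|$ on nested simplices.
\item It then runs an induction on $k$ through minimal good chains. For $d=2$ the critical step closes by parity, since a spread plaquette toggles three lamps.
\end{itemize}

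On the upper-bound side, your hierarchical construction of a side-$2^j$ triangle with $j+O(1)$ extra lamps is the right ingredient, once reinterpreted as toggling one lamp. However, you leave the global congestion bound unspecified. The paper avoids it by exploiting the product structure of the lamp measure:
\begin{itemize}
\item It applies Efron--Stein over the four classes $r+2\bbT_{2^k}$, each of which is a copy of the $\bbT_{2^{k-1}}$ dynamics under the moves $x-2T_d$.
\item It adds a three-step path per renormalised move that creates at most one extra lamp.
\end{itemize}
Together these give $\trel^{\bbT_{2^k}}\le(Ce^{2\beta})^k$.
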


We complement this result by a much larger lower bound essentially matching the upper bound in Theorem~\ref{th:main}, for a domain $\Lambda_{d,n}$ of subcritical size. It does not seem to have been predicted in the physics studies.
\begin{theorem}[Stronger bottleneck]
\label{th:finite}
  For any $d\ge 2$ there exists $C=C(d)\ge 1$ such that the following holds. If $n\le e^{\beta/C}$, then 
  \[e^{\beta n^{1/C}}/C\le \trel^{-\Lambda_{d,n},\bone}\le Ce^{Cn^{d}(\beta+1)}.\]
\end{theorem}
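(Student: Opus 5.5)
The upper and lower bounds in Theorem~\ref{th:finite} are of a very different nature. I would treat the upper bound as a crude finite-state Markov chain estimate. The lower bound I would reduce to a bottleneck (Cheeger-type) estimate, whose entire content is a combinatorial energy-barrier statement for the lamp/switch system on the simplex.

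\emph{Upper bound.} The state space $\Omega_{-\Lambda_{d,n}}$ has $N=2^{|\Lambda_{d,n}|}\le 2^{Cn^d}$ elements. Each plaquette contributes $\pm1$ to the Hamiltonian, and the number of plaquettes meeting the domain is at most $C n^d$. Hence
\[\min_\sigma\mu^{\bone}_{-\Lambda_{d,n}}(\sigma)\ge 2^{-Cn^d}e^{-2C\beta n^d}.\]
Since each site lies in $d+1$ plaquettes, every single-site flip has rate at least $e^{-2(d+1)\beta}/2$. For the canonical path from $\sigma$ to $\eta$, I would flip the disagreeing sites in lexicographic order; each such path has length at most $|\Lambda_{d,n}|$. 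The standard canonical path bound then gives
\[\trel\le |\Lambda_{d,n}|\max_{e}\frac{\sum_{\gamma_{\sigma\eta}\ni e}\mu(\sigma)\mu(\eta)}{\mu(\sigma)\,c(e)}\le |\Lambda_{d,n}|\,N\,\big(\min\mu\big)^{-1}e^{2(d+1)\beta},\]
which is at most $Ce^{Cn^d(\beta+1)}$.

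\emph{Lower bound: reduction.} I would pass to lamp variables. With the $\bone$ boundary condition on the complement of $-\Lambda_{d,n}$, the map from switches to lamps (ON plaquettes) is a bijection, since the triangular linear system over $\bbF_2$ is solved recursively starting from the boundary. The energy relative to $\bone$ is $2\beta$ times the number of ON lamps, and a single switch flip toggles the $d+1$ lamps of the plaquettes containing it.

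Next I would fix a target configuration $\sigma^*$ with exactly one ON lamp, placed far from the boundary, say near the centre of the simplex. Its switch configuration is a Pascal-triangle pattern mod $2$ that spreads over a large part of the domain. For a threshold $m=n^{1/C}$, I would let $A$ be the set of configurations reachable from $\bone$ by single flips through configurations with fewer than $m$ ON lamps. Then:
\begin{itemize}
\item $\mu(A)\ge\mu(\bone)$;
\item $\mu(A^c)\ge\mu(\sigma^*)\ge e^{-2\beta}\mu(\bone)$, provided $\sigma^*\notin A$;
\item the edge boundary of $A$ only involves configurations with between $m-d-1$ and $m$ lamps.
\end{itemize}
There are at most $\binom{Cn^d}{m}\le n^{Cm}$ such configurations. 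Using $n\le e^{\beta/C}$, their total mass is at most $n^{Cm}e^{-2\beta(m-d-1)}\mu(\bone)\le e^{-\beta m}\mu(\bone)$. Moreover $\mu(\bone)\ge 1/2$ or so, again because $n\le e^{\beta/C}$ makes the total weight of excited configurations small. Testing the variational formula~\eqref{eq:def:trel} with $f=\mathbf 1_A$ then gives $\trel\ge e^{\beta m}/C$.

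\emph{Main obstacle: the combinatorial barrier lemma.} What remains, and what I expect to be the hard step, is to show that every single-flip path from $\bone$ to $\sigma^*$ must at some time have at least $n^{1/C}$ ON lamps. Equivalently, in the Ledrappier/Pascal system on the simplex, one cannot build the large switch pattern of $\sigma^*$ incrementally while keeping the lamp set small. Without the boundary this fails, which is why the torus gives only the $k\beta$ behaviour of Theorem~\ref{th:2k:asymptotics}.

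I would attack this by renormalisation, using the Frobenius self-similarity $P(x)^{2}=P(x^{2})$ over $\bbF_2[x_1,\dots,x_d]$. The first step is to show that a configuration with $\ell$ lamps has switch support determined by the lamps through their "light cones". The second step is to coarse-grain by a factor $2$: a path with at most $\ell$ lamps at each scale induces a path at the coarser scale with at most $c\ell$ lamps, while the defect's distance to the boundary halves. The boundary (the hyperplane sum $=-n$ and the coordinate faces) is what prevents lamps from being annihilated cheaply. Iterating over $\log_2 n$ scales, I would aim for a lamp count that grows by a constant factor per scale, which yields $m\ge n^{1/C}$. If exact multiplicativity fails, the fallback is an isodiametral-type bound: the diameter of the switch region generated by $\ell$ lamps, reachable with an $\ell$-lamp budget, is at most $\ell^{C}$.
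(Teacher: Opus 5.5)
Your upper bound is fine. It is essentially the same crude finite-state estimate as Lemma~\ref{lem:trivial}, with canonical paths in place of a single path between the extrema of $f$.

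The lower bound has a genuine gap. Your target $\sigma^*$ does not exist, and the barrier lemma you isolate is false for its natural repair. The triangular system makes switches bijective only with the lamps based inside $-\Lambda_{d,n}$. But $H^{\bone}_{-\Lambda_{d,n}}$ sums over all $x\in-\Lambda_{d,n}-T_d$, which includes the layer of plaquettes based at level $\sum_ix_i=-n-1$; those lamps are then forced and cost energy. Equivalently (Lemma~\ref{lem:admissible:preimage}), the reachable lamp sets are exactly $-X$ with $X$ admissible, and by Lemma~\ref{le:small_admissible} a nonempty admissible set has at least $d+1$ points. Concretely, the Pascal pattern below your central lamp $c$ reaches the face $\sum_ix_i=-n$. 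It lights the corresponding multinomial row mod $2$ in that layer, including at least its $d$ corners $c-\ell e_i$. So $\mu(\sigma^*)\ge e^{-2\beta}\mu(\bone)$ is false.

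Now take the cleanest case $\ell=2^j$. The state is then exactly the (reflected) large plaquette $c-2^jT_d$. Since $2^jT_d$ is the sum of the $d+1$ translates $t+2^{j-1}T_d$, $t\in2^{j-1}T_d$, it can be built recursively with only $O_d(j)=O_d(\ln n)$ lamps ON at any time. The construction flips only switches in the Pascal simplex $\{x\le c,\ |c-x|_1\le \ell-1\}\subseteq-\Lambda_{d,n}$. This is the sharpness of Theorem~\ref{thm:lower}: renormalisation gains one lamp per scale, not a constant factor, so your scheme yields at best $\trel\ge e^{c\beta\ln n}$.

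Nor is the boundary what prevents cheap annihilation; Theorem~\ref{th:complexity:2} even lets chains leave the domain. The only role of the $\bone$ boundary is to restrict states to reflected admissible sets. This is what stops lamps from being removed one at a time, as they can be on the torus (Lemma~\ref{lem:Phi:bij}).

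The missing idea is an \emph{entangled} target carrying many lamps. The paper takes an admissible $X\subseteq\Lambda_{d,n}$ whose points are pairwise at distance at least $c_d\ln n$ in the metric $\rho$ generated by the long vectors $\pm2^ke_i$, $\pm2^k(e_i-e_j)$. Such an $X$ exists by Corollary~\ref{cons:set_exists}, via the linear algebra of Lemma~\ref{le:subset_adm} and the ball count of Lemma~\ref{le:size_rho}. It is combined with the exponential decay Lemma~\ref{le:long_sum}: $\sum_{x\in X\setminus\{\alpha\}}\lambda_d^{\rho(x,\alpha)-1}\ge2$ for every admissible $X\ni\alpha$. Consider the first time a chain from $X$ to $\varnothing$ behaves unusually in the $\rho$-ball $U_1$ of radius $r/3$ around some $x_1$. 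Then the admissible set $X_t\symdiff X$ contains $x_1$ and at most one other point of $U_1$, hence has at least $\lambda_d^{1-r/3}\ge n^{\alpha_d}$ points. Counting old versus new points then gives Theorem~\ref{th:complexity:2}.

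Because $|X|$ may be of order $n^{d-1}$, your probabilistic step must change as well. An absolute threshold ``at least $m$ lamps'' is useless: the target already exceeds $m$. At threshold $|X|+m$, the number of such configurations is not compensated by $e^{-2\beta m}$ when $\beta$ is of order $\ln n$. The paper instead uses the bottleneck $\{X':|X'\setminus X|\ge n^{\alpha_d}+\tfrac32|X\setminus X'|\}$, on which $|X'|-|X|\ge\tfrac45n^{\alpha_d}+\tfrac15|X'\symdiff X|$. The sum of $e^{-2\beta|Y|/5}$ over $Y=X'\symdiff X$ is then $O(1)$ for $n\le e^{\beta/C}$, and it is compared with $\mu(\sigma)\approx e^{-2\beta|X|}$.
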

We note however that, because of the subcriticality assumption, despite \eqref{eq:trel:small:scales} below, Theorem~\ref{th:finite} does not imply that the same lower bound holds in infinite volume. Indeed, we rather expect a highly unusual speed-up to take place in larger volumes and the lower bound of Theorem~\ref{th:main} to be closer to the truth. The result of Theorem~\ref{th:finite} is in fact not restricted to this particular shape of domain (e.g.\ $\{1,\dots,n\}^d$ would work) or boundary condition, but is specific to the size specified in the statement.

Further consequences of our study in geometric group theory and ergodic theory will be presented directly in Sections~\ref{ss:applications} and~\ref{sec:Ledrappier}, once the necessary background is recalled.

\subsection{Further motivation and context}
\subsubsection{From kinetically constrained models to plaquette models}
We mention another class of models that have been proposed to explain the behaviour of glassy materials, the \emph{kinetically constrained models}. An example in one dimension, is the \emph{East model}: a site is occupied ($-1$) or empty ($+1$); and a site may switch its occupation status as in Glauber dynamics, but only when its East (right) neighbour is occupied. Aldous and Diaconis~\cite{Aldous02} showed, in their pioneering work (see \cite{Faggionato13} for an overview), that this model also follows a quadratic super-Arrhenius law $\trel=e^{(C+o(1))\beta^2}$ as $\beta\to\infty$ with $C=1/(2\ln2)$, see \cite{Cancrini08}.

Within kinetically constrained models, an important role is played by those in the so-called `supercritical rooted' universality class, including the East model. In two dimensions, Mar\^ech\'e, Martinelli, Morris and Toninelli showed that the same $e^{\beta^2}$ behaviour holds~\cites{Martinelli19a,Mareche20combi}; we direct the reader to overviews from a physical~\cites{Ritort03,Garrahan11} and a mathematical~\cite{HartarskybookKCM} perspective.

A major drawback of these kinetically constrained models is that their constraints are imposed ad hoc to reflect the intuition of dynamical facilitation and caging effects, without any microscopic justification for the emergence of such constraints. This was an important motivation to introduce a more physically plausible model in the form of plaquettes. While widely studied in physics (nowadays, quantum versions of these models are actively investigated in physics, see \cites{Sfairopoulos23,Sfairopoulos25}) plaquette models remained largely intractable mathematically until the work of Chleboun, Faggionato, Martinelli and Toninelli \cite{Chleboun17}. The behaviour of such models is highly dependent on the shape of the plaquette. Some models, such as the square plaquette one, present behaviour similar to the `supercritical unrooted' kinetically constrained models featuring an Arrhenius scaling $\trel=e^{\beta}$ and are much more accessible \cites{Chleboun21,Chleboun20,Espriu04,Mueller17}. 

\subsubsection{The Ledrappier subshift}
The triangular plaquette model is also closely related to important notions in quite different fields. In ergodic theory, the \emph{Ledrappier subshift}, or \emph{$3$-dot subshift}~\cite{Ledrappier78} consists in spin configurations in which every triangle contains an odd number of $+1$; equivalently, in minimal-energy configurations also known as \emph{groundstates}. It is a prominent example of $\Z^2$-subshift which is mixing but not $2$-mixing. We refer to Section~\ref{sec:Ledrappier} for more background and consequences of our results to the description of configurations with small support.

\subsubsection{Baumslag's metabelian group}
Within group theory, Baumslag \cite{Baumslag72} gave an example of finitely-presented metabelian group $\Gamma$ containing the wreath product $\Z\wr\Z=\bigoplus_{\Z}\Z\rtimes\Z$. A variant contains the ``lamplighter group'', namely the wreath product $(\Z/2\Z)\wr\Z$. It may be given by the presentation
\begin{equation}\label{eq:Baumslag}
\Gamma=\left\langle a,x,y\mid xy=yx,a^2=a\cdot x^{-1}a x\cdot y^{-1}a y=1\right\rangle\end{equation}
or as $\Gamma=S\rtimes\Z^2$ where $S$ consists in equivalence classes of finitely supported lamp configurations (in other words, those that differ by an element of the Ledrappier subshift). We refer to Section~\ref{ss:applications} for more background and consequences or our results to the word problem of $\Gamma$.

\subsection{Outline of the paper}
In Section~\ref{ss:combinatorial}, we present three combinatorial results, corresponding respectively to: the lower bounds of Theorems~\ref{th:main} and~\ref{th:2k:asymptotics}; the lower bound of Theorem~\ref{th:finite}; the upper bound of Theorem~\ref{th:main}. 
The first (Section~\ref{subsec:distance}) may be stated as follows in terms of lamps and switches: converting the configuration with a lone ON lamp at the origin into a configuration with no ON lamp in a ball of radius $n$ requires, at some intermediate step, to have at least $\log_2 n$ ON lamps in that ball; so the energy of the system must rise significantly at some intermediate step. Since this is improbable, the relaxation time is bounded from below.

The second combinatorial result (Section~\ref{subsec:complexity}) asserts that there exist configurations in a region of size $n$ which can be reduced to the constant $+1$ configuration, but require the creation of $n^{\Theta(1)}$ additional ON lamps to do so; we call these \emph{entangled} configurations.

The third combinatorial result (Section~\ref{subsec:cycles}) considers \emph{cycles}: switch configurations whose associated lamp configuration has all lamps OFF in a given region. We obtain estimates on a generating function counting cycles; namely, we show that, in a suitably large simplex, there are not too many cycles, when weighted by their number of $-1$ switches. It follows that the partition functions associated with two different completions outside a large simplex
differ very little.

In Section~\ref{ss:probabilistic}, we provide the probabilistic arguments that make use of the combinatorial results mentioned above. The proof of Theorems~\ref{th:main}, \ref{th:2k:asymptotics} and \ref{th:finite} is completed there. Note however that the upper bounds of Theorems~\ref{th:2k:asymptotics} and~\ref{th:finite} are proved directly (by the bisection technique or canonical paths) without input from Section~\ref{ss:combinatorial}.

Finally, in Sections~\ref{ss:applications} and~\ref{sec:Ledrappier}, we elaborate on the applications of the combinatorial results to group theory and subshifts.

\section{Combinatorial results}\label{ss:combinatorial}
In this section we focus on three purely deterministic statements of combinatorial nature which will be the key to the main results.

\subsection{Extremal combinatorics: logarithmic bottleneck for moving a single ON lamp}
\label{subsec:distance}

In this subsection, we only consider the lamp configurations; every lamp may be ON ($1$) or OFF ($0$). We call elements of $\{0,1\}^{\Z^d}$ simply \emph{configurations}, and we identify them with subsets of $\Z^d$ by considering their support. We add configurations pointwise modulo $2$; this corresponds to symmetric difference.

Recall the plaquette $T_d=\{0,\, e_1,\, e_2,\dots,e_d\}$ from \eqref{eq:def:Td}. In one move we may choose a point $x\in\Z^d$ and toggle the lamps at all points of the set $x+T_d$ (i.e.\ add its indicator function modulo $2$).
Any shift $x+T_d$ of this set is called a \emph{plaquette}.

\begin{definition}[{Chain}]\label{def:chain}
A sequence of configurations $(W_i)_{i=0}^n$ is called a \emph{chain} if for every $i$ the symmetric difference
\[
W_i\symdiff W_{i+1}
\]
is a plaquette or $\varnothing$.
\end{definition}

\medskip

We will use the closed $\ell_{\infty}$-ball in $\Z^d$:
\begin{equation}\label{eq:ball}
B(x,R) \coloneqq \left\{y\in\Z^d\mid \|x-y\|_{\infty} \le R\right\}.
\end{equation}

\begin{theorem}[{Combinatorial bottleneck}]\label{thm:lower}
Denote the origin of $\Z^d$ by $O$.
Consider $k\in\N$, and set $B_k = B(O,3d\,2^{k})$.
Let $(W_0,\dots,W_n)$ be a chain such that $W_0=\{O\}$ and $W_n\cap B_k=\varnothing$.
Then for some index $i$ we have
\[
|W_i\cap B_k| > k.
\]
\end{theorem}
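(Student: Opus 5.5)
Throughout, identify configurations with polynomials over $\bbF_2$: a configuration $W$ corresponds to $\sum_{x\in W}X^x$ in $\bbF_2[X_1^{\pm1},\dots,X_d^{\pm1}]$. A plaquette $x+T_d$ is then $X^x P$ with $P=1+X_1+\dots+X_d$, so $W_i\symdiff W_{i+1}\in\{0\}\cup X^{\Z^d}P$. Since $P^{2^j}=1+X_1^{2^j}+\dots+X_d^{2^j}$ by Frobenius, the plaquette scaled by $2^j$ is a sum of plaquettes.

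I would argue by induction on $k$ with a strengthened, translation-invariant statement. For $k=0$ it is trivial: $|W_0\cap B_0|=1>0$. For the inductive step the idea is renormalisation. Split $\Z^d$ into the $2^d$ cosets of $2\Z^d$. Along the chain, watch the trace of $W_i$ on a box of radius $\sim 3d\,2^{k-1}$ around $O$, and on boxes around $e_1,\dots,e_d$ after a change of scale. The lone lamp at $O$ must be destroyed inside $B_k$. At the last time it is still "charged" there, some parity or residue invariant must be carried elsewhere.

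The key invariant I would use: for a finite configuration $W$ and a ball $B$, consider the class of $W\cap B$ modulo the ideal generated by $P$, restricted to "local" moves. Equivalently, a configuration with no lamp in a region $R$ can be reached from $\{O\}$ only if $X^0$ agrees with something supported off $R$ modulo multiples of $P$ whose support is locally controlled. A cleaner route is a two-phase argument. Let $t$ be the first time at which $W_t$ contains no lamp within distance $3d\,2^{k-1}$ of $O$. Before $t$, the configuration restricted to $B(O,3d\,2^{k-1})$ is nonempty. Also, $\{O\}$ differs from $W_t$ by a polynomial multiple $QP$, and $QP$ must be nonzero near the origin. I would show the support of $W_t$ inside $B_k$ must contain a pattern that itself requires, by the induction hypothesis applied at scale $2^{k-1}$ around some other point, more than $k-1$ lamps. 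Adding one extra lamp, witnessed in a disjoint region, then gives $k$ at one time.

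The main obstacle is simultaneity: the extra lamp and the $k-1$ lamps of the sub-bottleneck must be present at the same index $i$. I would handle this with a "first exit" time. Consider the first index $s$ at which the small ball $B(O,3d\,2^{k-1})$ becomes empty. Just before $s$, that ball holds at least one lamp, and the removal required emitting the "charge" into the annulus $B_k\setminus B(O,3d\,2^{k-1})$. Using the Frobenius identity, this emitted charge forms a configuration congruent mod $P$ to a translate of a lone lamp, at distance $\le 2^{k}$-ish. So from time $s$ onward there is a configuration in a translated ball of radius $3d\,2^{k-1}$, disjoint from the central ball, which starts as an equivalent of a single lamp and must finally be cleared. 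By induction, at some later time that translated ball contains more than $k-1$ lamps. The remaining difficulty is guaranteeing at that same time an additional lamp elsewhere in $B_k$. To get it, I would strengthen the induction hypothesis to "more than $k$ lamps in $B_k$, or the chain leaves a certain residue invariant". The constant $3d$ provides the room for disjoint balls at each scale, and the strengthened hypothesis is what I expect to need the real work.
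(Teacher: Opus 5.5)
Your proposal has a genuine gap: it does not contain a proof of the inductive step. The only nontrivial content of the theorem is getting $k$ rather than $k-1$ lamps in $B_k$ \emph{at the same index}, and you explicitly defer this to a ``strengthened hypothesis'' with an unnamed ``residue invariant''. The steps leading up to it also fail as written.
(i) Congruence modulo $(P)$ is a global relation and does not localise to balls. At the first time the central ball is empty, nothing forces the emitted charge to be congruent to a single lamp in one translated ball. Since $P^{2^j}=1+X_1^{2^j}+\dots+X_d^{2^j}$, a chain can go from $\{O\}$ to $\{2^je_1,\dots,2^je_d\}$, so the charge generally splits into several pieces in different places. In the natural hierarchical strategy, the extra lamp is exactly one of these other pieces.
(ii) Your induction hypothesis is about chains starting at exactly one lamp. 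It says nothing about a chain starting from a configuration merely congruent to one, so it cannot be applied to the trace on a subregion.
(iii) In the $\ell^\infty$ metric, any ball of radius $3d\,2^{k-1}$ contained in $B_k=B(O,3d\,2^k)$ must contain $O$. So no translated ball of that radius lies inside $B_k$ and is disjoint from the central one, and lamps counted in such a ball need not lie in $B_k$.

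The missing idea is to pass from scale $2^k$ to scale $2^{k-1}$ on the \emph{same} region, rather than on disjoint subregions. The paper does this with the renormalisation map $\renorm(W)(x)=\sum_{p\in K_x}W(p)$, where $K_x=2x-T_d$. Every plaquette meets the inverted plaquettes in one of three ways: in no points, in two points of a single $K_u$, or in one point each of $K_x,K_{x+e_1},\dots,K_{x+e_d}$. Hence $\renorm$ maps chains to chains and fixes $\{O\}$. On nested simplices $U_k$ built so that $\bigcup_{x\in U_{k-1}}K_x\subseteq U_k$, it also satisfies $|\renorm(W)\cap U_{k-1}|\le|W\cap U_k|$. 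Applying the induction hypothesis to the renormalised chain of a chain with at most $k$ lamps gives an index where this inequality is an equality. At that index the configuration is \emph{critical}: $k$ lamps, one in each of $k$ internal inverted plaquettes. The extra lamp then comes from the rigidity of this equality case. Take a lexicographically minimal chain that keeps at most $k$ lamps. An exchange argument shows that the plaquettes entering and leaving a critical configuration are disjoint spread plaquettes. This is impossible by parity when $d$ is even. When $d$ is odd, a finer analysis of the lamps inside the internal inverted plaquettes yields a contradiction, for instance two distinct common points of the coarse plaquettes $\mathcal S_1,\mathcal S_2$, contradicting Lemma~\ref{lem:intersect2}. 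So the $+1$ is not a lamp sitting in a separate region; it comes from the impossibility of entering the equality case while keeping at most $k$ lamps.
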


We note that this result is sharp up to an additive constant in the last display when $d=2$ and up to a multiplicative one for $d>2$. This was already noted for $d=2$ in \cite{Newman99}, and can be easily seen by induction, using the configuration of Figure~\ref{fig:Sierpinski} and  \eqref{eq:Sierpinski}. Since we will not use this fact, we omit the details.

\subsubsection{Renormalisation}

For $x\in\Z^d$ define the \emph{inverted plaquette}
\begin{equation}
\label{eq:def:inverted}
K_x \coloneqq 2x - T_d = \left\{2x,\, 2x-e_1,\, 2x-e_2, \dots, 2x-e_d\right\}.
\end{equation}

\begin{lemma}[{Types of inverted plaquettes}]
\label{lem:intersection}\leavevmode
\begin{enumerate}
\item\label{item:1} If $x\neq y$ are in $\Z^d$, then $K_x\cap K_y=\varnothing$.
\item\label{item:2} A plaquette intersects inverted plaquettes in exactly one of the following ways:
\begin{enumerate}
\item\label{item:intersection:a} it intersects none;
\item\label{item:intersection:b} it intersects exactly one inverted plaquette and does so in exactly two points;
\item\label{item:intersection:c} it has the form $2x+T_d$ for some $x\in\Z^d$ and intersects exactly the $d+1$ inverted plaquettes
$K_x,K_{x+e_1},K_{x+e_2},\dots, K_{x+e_d}$, each in one point.
\end{enumerate}
\end{enumerate}
\end{lemma}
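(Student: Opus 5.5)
The plan is a parity-and-coordinate-sum argument. Each point of $K_x$ determines $x$, and a plaquette is classified by the parity vector of its base point.

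\textbf{Part \ref{item:1}.} I will show that every $z\in\Z^d$ lies in at most one $K_x$, and that $x$ can be recovered from $z$. If $z=2x$, then every coordinate of $z$ is even. If $z=2x-e_j$, then exactly one coordinate of $z$, namely the $j$-th, is odd. So the set of odd coordinates of $z$ is empty or a singleton, and it identifies which element of $K_x$ the point $z$ is. From that, $x=z/2$ or $x=(z+e_j)/2$ is determined. Hence $K_x\cap K_y\neq\varnothing$ forces $x=y$.

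\textbf{Part \ref{item:2}: setup.} Take a plaquette $P=y+T_d$ and let $S\subseteq\{1,\dots,d\}$ be the set of coordinates in which $y$ is odd. By part \ref{item:1}, a point lies in some inverted plaquette exactly when it has at most one odd coordinate. The point $y$ has odd set $S$. The point $y+e_i$ has odd set $S\setminus\{i\}$ if $i\in S$, and $S\cup\{i\}$ if $i\notin S$. I split into cases by $|S|$.

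\textbf{Part \ref{item:2}: case analysis.}
\begin{itemize}
\item If $|S|\ge3$, every point of $P$ has at least two odd coordinates. So $P$ meets no inverted plaquette, which is case \ref{item:intersection:a}.
\item If $|S|=2$, say $S=\{i,j\}$, exactly the points $y+e_i$ and $y+e_j$ qualify. Their odd sets are $\{j\}$ and $\{i\}$. The decoding from part \ref{item:1} gives the same $x$ in both cases, namely $x=(y+e_i+e_j)/2$. This is case \ref{item:intersection:b}.
\item If $|S|=1$, say $S=\{i\}$, the qualifying points are $y$ (odd set $\{i\}$) and $y+e_i$ (all coordinates even). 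Both decode to $x=(y+e_i)/2$. The other points $y+e_j$, $j\neq i$, have two odd coordinates. This is again case \ref{item:intersection:b}.
\item If $S=\varnothing$, write $y=2x$. Then $y\in K_x$, and $y+e_i=2(x+e_i)-e_i\in K_{x+e_i}$. So $P=2x+T_d$ meets each of $K_x,K_{x+e_1},\dots,K_{x+e_d}$ in exactly one point, and by part \ref{item:1} these inverted plaquettes are distinct. This is case \ref{item:intersection:c}.
\end{itemize}

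\textbf{Exclusivity.} The three cases are mutually exclusive because they have different intersection patterns with the inverted plaquettes: none, one in two points, or $d+1\ge 2$ in one point each (here $d\ge1$).

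There is no real obstacle here. The only care needed is the decoding map in part \ref{item:1}, which makes both the disjointness and the identification of the intersecting $K_x$ immediate.
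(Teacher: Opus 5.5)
Your proof is correct and takes essentially the same route as the paper. The paper also characterises $\bigcup_u K_u$ by parity ($p \bmod 2\in T_d$, i.e.\ at most one odd coordinate), sorts plaquettes $y+T_d$ by the number of odd coordinates of $y$, and identifies $y=2u-e_i$ and $y=2u-e_i-e_j$ in case (b); in part 1 it writes $2(x-y)=t-s$ and notes that $t-s\in\{-1,0,1\}^d$ cannot be a nonzero even vector, which is the same observation as your parity decoding.
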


\begin{proof}
\ref{item:1}. If $2x-t = 2y-s$ for some $t,s\in T_d$, then $2(x-y)=t-s$.
But $t-s$ has coordinates in $\{-1,0,1\}$, hence cannot equal a nonzero even vector (that is, a vector in $2\Z^d$). Therefore $t=s$ and $x=y$.

\ref{item:2}. Modulo $2$ we have $2y\equiv 0$ and $2y-e_i\equiv e_i$, hence
\[
p\in\bigcup_y K_y \iff p \bmod 2 \in T_d.
\]

For a plaquette $B=x+T_d$ the $(d+1)$ residues are $(x\bmod 2)+T_d$, so the number of points of $B$ lying in $\bigcup_y K_y$ is $(d+1)$ iff $x$ is even, $2$ iff $x$ has exactly one or two odd coordinates and otherwise $0$.

Thus we get cases \ref{item:intersection:c}, \ref{item:intersection:b}, \ref{item:intersection:a}. If $x=2u{\in2\Z^d}$, then
\[
2u\in K_u,\qquad 2u+e_i\in K_{u+e_i}\ (i=1,\dots, d),
\]
so $2u+T_d$ meets $K_u,K_{u+e_1},\dots,K_{u+e_d}$. By \ref{item:1}. of the proof, the intersections of $2x+T_d$ with each of these inverted plaquettes are disjoint, so they consist of one point each.

If $x = 2u-e_i$, then $x+T_d$ has two common points $\{2u,2u-e_i\}$ with $K_{u}$. By \ref{item:1}. of the proof, $x+T_d$ has no intersection with any $K_v$ for $v\in\Z^d\setminus\{u\}$.

If $x=2u-e_i-e_j$ for $i\neq j$, then $x+T_d$ has two common points $\{2u-e_i,2u-e_j\}$ with $K_u$. By \ref{item:1}. of the proof, there $x+T_d$ has no intersection with any $K_v$ for $v\in\Z^d\setminus\{u\}$.

In the other cases, $x$ and $x+e_i$ have at least two odd coordinates, so cannot belong to an inverted plaquette. 
\end{proof}

\begin{figure}
    \centering
    \begin{tikzpicture}[x=0.5cm,y=0.5cm,radius=3pt]
\foreach \x in {0,...,8} {
    \draw[gray] (\x, 0)--(0,\x);
    \draw[gray] (\x,0)--(\x,8-\x)--(0,8-\x);}
\foreach \x in {0,...,4} {
  	\foreach \y in {0,...,\the\numexpr4-\x\relax} {
  	\fill[color=red,opacity=0.5] (2*\x,2*\y)--(2*\x,2*\y-1)--(2*\x-1,2*\y)--cycle;}}
  	\fill[color=blue,opacity=0.5] (2,2)--(3,2)--(2,3)--cycle;
  	\fill[color=black,opacity=0.4] (1,4)--(2,4)--(1,5)--cycle;
  	\fill (1,1) circle;
  	\fill (1,2) circle;
  	\fill (2,3) circle;
  	\fill (3,2) circle;
  	\fill (3,1) circle;
  	\fill (0,4) circle;
  	\fill (3,0) circle;
  	\fill (3,4) circle;
  	\fill (5,3) circle;
  	\fill (2,4) circle;
  	\fill (6,1) circle;
  	\fill (6,2) circle;
  	\fill (5,2) circle;
  	\end{tikzpicture}
  	\qquad
  	\begin{tikzpicture}[
  	x=1cm,y=1cm,radius=3pt]
  	\draw[->,thick,>=Stealth] (0,0) -- node[below] {$e_1$} (1,0);
  	\draw[->,thick,>=Stealth] (0,0) -- node[left] {$e_2$} (0,1);
\foreach \x in {0,...,4} {
    \draw[gray] (\x, 0)--(0,\x);
    \draw[gray] (\x,0)--(\x,4-\x)--(0,4-\x);}
    \phantom{\draw (0,0)--(0,-0.5);}
  	\fill[color=blue,opacity=0.5] (1,1)--(2,1)--(1,2)--cycle;
    \fill (0,2) circle;
  	\fill (1,1) circle;
  	\fill (2,0) circle;
  	\fill (2,1) circle;
  	\fill (2,2) circle;
  	\fill (3,1) circle;
  	\end{tikzpicture}
    \caption{Illustration of the renormalisation map $\renorm$ of \eqref{eq:renorm}, with $\Lambda_{2,8}$ on the left and $\Lambda_{2,4}$ on the right, see \eqref{eq:def:Lambda:dn}. The red inverted plaquettes (see \eqref{eq:def:inverted}) on the left are contracted to the vertices on the right. The blue spread plaquette (see Definition~\ref{def:spread}) corresponds to the blue plaquette on the right, while the gray non-spread plaquette has no effect on the renormalised configuration.}
    \label{fig:renorm}
\end{figure}

We now define the \emph{renormalisation map} $\renorm$ from configurations to configurations as illustrated in Figure~\ref{fig:renorm}.
Given a configuration $W$, define $\renorm(W)$ at a point $x\in\Z^d$ as the parity of the number of ON lamps of $W$ inside $K_x$:
\begin{equation}
\label{eq:renorm}
\renorm(W)(x) \coloneqq \sum_{p\in K_x} W(p)\pmod 2.
\end{equation}
(this operator already appears, for $d=2$, in the proof of \cite{Arenas-Carmona08}*{Theorem 7.1}). From Lemma~\ref{lem:intersection} we obtain the following key fact.

\begin{lemma}[Renormalisation]\label{lem:renorm}
If $(W_0,\dots,W_n)$ is a chain, then $\bigl(\renorm(W_0),\dots,\renorm(W_n)\bigr)$ is also a chain.\qed
\end{lemma}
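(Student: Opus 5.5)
The plan is to exploit the linearity of $\renorm$ over $\bbF_2$ and reduce to the case of a single move. By \eqref{eq:renorm}, $\renorm(W)(x)$ is a sum modulo $2$ of values of $W$, so $\renorm$ is additive: $\renorm(W\symdiff W')=\renorm(W)\symdiff\renorm(W')$. This holds because the sets $K_x$ are finite, so everything is well defined pointwise, including for infinite configurations. It therefore suffices to show the following: for every $i$, the set $\renorm(W_i)\symdiff\renorm(W_{i+1})=\renorm(W_i\symdiff W_{i+1})$ is either empty or a plaquette. Since $W_i\symdiff W_{i+1}$ is either $\varnothing$ or a plaquette $P=z+T_d$, we only need to compute $\renorm(\varnothing)=\varnothing$, which is immediate, and $\renorm(P)$ for an arbitrary plaquette $P$.

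To compute $\renorm(P)$, note that $\renorm(P)(x)=|P\cap K_x|\bmod 2$, and apply Lemma~\ref{lem:intersection}\ref{item:2}. There are three cases.
\begin{itemize}
\item In case \ref{item:intersection:a}, every intersection $P\cap K_x$ is empty, so $\renorm(P)=\varnothing$.
\item In case \ref{item:intersection:b}, exactly one $K_x$ meets $P$, and it does so in two points. That parity is even, so again $\renorm(P)=\varnothing$.
\item In case \ref{item:intersection:c}, we have $P=2x+T_d$, and $P$ meets exactly $K_x,K_{x+e_1},\dots,K_{x+e_d}$, each in a single point. Hence $\renorm(P)=\{x,x+e_1,\dots,x+e_d\}=x+T_d$, which is a plaquette.
\end{itemize}
Lemma~\ref{lem:intersection}\ref{item:2} guarantees these cases are exhaustive, so $\renorm(P)$ is always either $\varnothing$ or a plaquette. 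This proves the chain property for $(\renorm(W_i))_i$.

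There is no genuine obstacle, since the combinatorial content sits entirely in Lemma~\ref{lem:intersection}. The only point needing care is additivity of $\renorm$: it is what lets us pass from the symmetric difference of the $W_i$ to that of their images.
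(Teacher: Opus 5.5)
Your proof is correct and is the argument the paper intends. The paper gives no separate proof, stating the lemma with a \qed\ as an immediate consequence of Lemma~\ref{lem:intersection}. Your reduction via $\bbF_2$-linearity of $\renorm$ to a single plaquette $P$, followed by the three-case parity count showing $\renorm(P)\in\{\varnothing,\,x+T_d\}$, is exactly that deduction written out.
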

Let us note that Lemma~\ref{lem:renorm} applies without change to any plaquette shape instead of $T_d$, although we will not need this fact.

In Lemma~\ref{lem:intersection}, we classified the possible intersection types between plaquettes and inverted plaquettes. We will now look at intersections with simplices, recalling \eqref{eq:def:Lambda:dn}.

\begin{lemma}\label{lem:intersect2}
\leavevmode
\begin{enumerate}
\item\label{item:intersect2:1} A plaquette is either entirely contained in a simplex, or it intersects the simplex in at most one point.
\item\label{item:intersect2:2} Two distinct plaquettes intersect in at most one point.
\end{enumerate}
\end{lemma}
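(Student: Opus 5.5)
Both parts are elementary facts about differences of points, and I would derive them directly from the coordinate descriptions: a plaquette is $x+T_d=\{x,x+e_1,\dots,x+e_d\}$, and a simplex is $y+\Lambda_{d,n}$. The key observation is that any two distinct points of a plaquette differ by $e_i$ or by $e_i-e_j$ with $i\neq j$. In either case the difference has coordinates in $\{-1,0,1\}$ and coordinate sum in $\{0,\pm1\}$.

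For part~\ref{item:intersect2:2}, suppose two distinct plaquettes $x+T_d$ and $y+T_d$ share two distinct points $p\ne q$. Write $p=x+t=y+s$ and $q=x+t'=y+s'$ with $t,t',s,s'\in T_d$. Then $t-t'=s-s'$, and $x-y=s-t$. I will check that the difference map $(t,t')\mapsto t-t'$ is injective on ordered pairs of distinct elements of $T_d$, by inspecting its values:
\begin{itemize}
\item $e_i-0=e_i$ and $0-e_i=-e_i$;
\item $e_i-e_j$ for $i\ne j$.
\end{itemize}
These vectors are pairwise distinct and each determines its ordered pair uniquely. Hence $t=s$ and $t'=s'$, so $x=y$, a contradiction.

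For part~\ref{item:intersect2:1}, translate so the simplex is $\Lambda_{d,n}$. Membership of a point $z$ is given by the $d+1$ linear inequalities $z_i\ge 0$ for each $i$, together with $\sum_i z_i\le n$. Suppose the plaquette $x+T_d$ contains two points of $\Lambda_{d,n}$; I want to show all of $x+T_d$ lies in $\Lambda_{d,n}$. I will argue inequality by inequality, using that for each constraint at most one point of the plaquette takes a value different from the others:
\begin{itemize}
\item For the constraint $z_i\ge0$: the points of $x+T_d$ all have $i$-th coordinate $x_i$, except $x+e_i$, which has $x_i+1$. Any two points of the plaquette include one with $i$-th coordinate $x_i$. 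So if two points lie in the simplex, then $x_i\ge 0$, and hence every point satisfies $z_i\ge0$.
\item For the constraint $\sum_i z_i\le n$: all points have coordinate sum $|x|+1$, where $|x|=\sum_i x_i$, except $x$ itself, which has sum $|x|$. Any two points include one of sum $|x|+1$. So if two points lie in the simplex, then $|x|+1\le n$, and hence all points satisfy the constraint.
\end{itemize}
Thus all points of the plaquette lie in the simplex.

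I expect no real obstacle here. The only care needed is in the case bookkeeping of part~\ref{item:intersect2:1}: one must note that for each defining inequality of the simplex, a single "exceptional" point of the plaquette differs from all the others, so any pair of points certifies the inequality for the whole plaquette.
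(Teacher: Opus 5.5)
Your proof is correct, but it is organised differently from the paper's.

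For part~1, the paper splits into cases according to where the base point $x$ of the plaquette lies:
\begin{itemize}
\item if $x\in\Lambda_{d,m}$ and the plaquette is not contained, then $x_1+\dots+x_d=m$ and the intersection is $\{x\}$;
\item if $x\notin\Lambda_{d,m}$ but some $x+e_i\in\Lambda_{d,m}$, then $x_i=-1$ and the intersection is $\{x+e_i\}$.
\end{itemize}
You instead argue facet by facet. For each of the $d+1$ inequalities defining the simplex, exactly one vertex of the plaquette is exceptional. So any two points of the plaquette lying in the simplex certify every inequality for the whole plaquette. This is slightly cleaner and shows why the statement holds: each functional $z\mapsto z_i$ and $z\mapsto\sum_j z_j$ isolates exactly one vertex of $T_d$. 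It proves the contrapositive, whereas the paper's case analysis also names the single intersection point explicitly.

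For part~2, the paper gives no separate argument. Since $x+T_d=x+\Lambda_{d,1}$ is itself a simplex, two common points force one plaquette inside the other, hence equality by cardinality. Your direct argument, via injectivity of $(t,t')\mapsto t-t'$ on ordered pairs of distinct elements of $T_d$, is also valid and self-contained. It is the same kind of difference argument as in Lemma~\ref{lem:intersection}, but it costs a separate verification that the paper gets for free from part~1.
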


\begin{proof}
\ref{item:intersect2:1}. It suffices to consider a simplex of the form $\Lambda_{d,m}$.
Suppose $T_d+x$ has a nonempty intersection with $\Lambda_{d,m}$ but is not contained in $\Lambda_{d,m}$. There are two cases.

Assume $x\in \Lambda_{d,m}$. Then $x_1+\dots+x_d = m$ (otherwise $x+e_i\in \Lambda_{d,m}$ for all $i$), and therefore
\[
(T_d+x)\cap\Lambda_{d,m} = \{x\}.
\]

Assume $x\notin \Lambda_{d,m}$ and $x+e_i\in \Lambda_{d,m}$ for some $i$. Then $x_i = -1$, and
\[
(T_d+x)\cap \Lambda_{d,m} = \{x+e_i\}.
\]

\ref{item:intersect2:2}. This is immediate from \ref{item:intersect2:1}., since the plaquette $T_d+x$ is precisely $x+\Lambda_{d,1}$.
\end{proof}

\subsubsection{Proof of Theorem \ref{thm:lower}} 

We construct recursively a sequence of nested simplices $U_k$.
We start with $U_0\coloneqq\{O\}$. 
Assuming that the simplex $U_k$ has been constructed, define
\[
U'_{k+1}\coloneqq \bigcup_{x\in U_{k}}K_x.
\]
Let $U_{k+1}$ be the smallest simplex that contains, in full, every plaquette that intersects $U'_{k+1}$.

\begin{prop}[{Inductive statement of the combinatorial bottleneck}]
\label{prop:chains}
Let $(W_0,\dots,W_n)$ be a chain such that $W_0=\{O\}$ and $W_n\cap U_k=\varnothing$.
Then for some index $i$, we have
\[
|W_i\cap U_k| > k.
\]
\end{prop}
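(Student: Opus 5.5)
We argue by induction on $k$, using the renormalisation map $\renorm$ of \eqref{eq:renorm} to reduce scale $k+1$ to scale $k$. The base case $k=0$ is immediate. Indeed $U_0=\{O\}$, so $|W_0\cap U_0|=1>0$.

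For the inductive step, let $(W_0,\dots,W_n)$ be a chain with $W_0=\{O\}$ and $W_n\cap U_{k+1}=\varnothing$. By Lemma~\ref{lem:renorm}, $(\renorm(W_i))_i$ is again a chain, and it satisfies the hypotheses at scale $k$.
\begin{enumerate}
\item At the start, $O=2O\in K_O$ and $O\notin K_x$ for $x\neq O$ by Lemma~\ref{lem:intersection}\ref{item:1}. Hence $\renorm(W_0)=\{O\}$.
\item At the end, for $x\in U_k$ we have $K_x\subseteq U'_{k+1}\subseteq U_{k+1}$. Since $W_n\cap U_{k+1}=\varnothing$, this gives $\renorm(W_n)\cap U_k=\varnothing$.
\end{enumerate}
The induction hypothesis therefore yields an index $i$ with $|\renorm(W_i)\cap U_k|\ge k+1$. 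Each $x\in U_k$ with $\renorm(W_i)(x)=1$ forces at least one lamp of $W_i$ in $K_x$. The sets $K_x$ are pairwise disjoint and contained in $U_{k+1}$, so $|W_i\cap U_{k+1}|\ge k+1$.

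The main obstacle is gaining the one extra lamp needed for the strict inequality $>k+1$. I would choose $i$ to be the \emph{last} index with $|\renorm(W_i)\cap U_k|\ge k+1$. This index exists and is $<n$, since the final renormalised configuration is empty on $U_k$.

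Suppose, for contradiction, that $|W_{j}\cap U_{k+1}|\le k+1$ for all $j$. Then $W_i\cap U_{k+1}$ consists of exactly one lamp in each odd $K_x$ and nothing else.

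Now look at the move $P=W_i\symdiff W_{i+1}$. It must change $\renorm$ on $U_k$, so by Lemma~\ref{lem:intersection} it is of type~\ref{item:intersection:c}: $P=2y+T_d$, meeting $K_y,K_{y+e_1},\dots,K_{y+e_d}$ in one point each. Since $P$ meets $U'_{k+1}$, the construction of $U_{k+1}$ gives $P\subseteq U_{k+1}$.

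Let $a$ be the number of odd sites among $\{y,y+e_1,\dots,y+e_d\}\cap U_k$ and $b$ the number of even ones. For the count to drop below $k+1$ we need $a>b$. Under the minimality assumption,
\[
|W_{i+1}\cap U_{k+1}| = k+1+(d+1)-2|P\cap W_i|,
\]
and $|P\cap W_i|\le a$, since $W_i$ has exactly one lamp in each odd $K_x$. Every site $y+e_j$ that is not in $U_k$ still contributes a point of $P$ lying in $U_{k+1}$. Lemma~\ref{lem:intersect2} controls how $P$ can straddle the simplices $U'_{k+1}$ and $U_{k+1}$.

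I would then run the same analysis on the steps after $i+1$. The renormalised count stays $\le k$ there, while the genuine lamps must all be cleared from $U_{k+1}$. The aim is to show that the surplus lamps created by $P$ (points of $P$ outside $W_i$) cannot be removed without, at some time, exceeding $k+1$ lamps in $U_{k+1}$.

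If this bookkeeping fails, I would instead strengthen the inductive statement, for instance to $|W_i\cap U_k|\ge k+1$ together with a lamp outside $U'_k$, so that the extra unit propagates automatically through the induction. This is the step I expect to require the most care.
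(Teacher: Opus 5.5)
\textbf{There is a genuine gap: the extra lamp behind the strict inequality is never proved, and the tail-based plan you sketch for it cannot work.}

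Your setup is right. Lemma~\ref{lem:renorm} and your two endpoint checks give some $i$ with $|\renorm(W_i)\cap U_k|\ge k+1$, hence $|W_i\cap U_{k+1}|\ge k+1$. You also essentially recover Lemma~\ref{lem:critical}: at a critical configuration, a move that changes $\renorm$ on $U_k$ without exceeding $k+1$ lamps must be a spread plaquette.

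The whole content of the proposition, however, is the extra unit, and there you only have a plan. Looking at the move $P$ out of the last peak and at the steps after it cannot produce a contradiction, because nothing forces surplus lamps. If $P\subseteq W_i$, your own formula gives $|W_{i+1}\cap U_{k+1}|=k-d$, so $P$ creates no new lamp at all. Here is a concrete instance:
\begin{itemize}
\item Take $k+1=2(d+1)$.
\item Take two spread plaquettes $P,Q\subseteq U'_{k+1}$ meeting disjoint sets of iip.
\item Set $W_i=P\cup Q\cup\{2^me_1,\dots,2^me_d\}$ with $m$ large.
\end{itemize}
Then $W_i\symdiff\{O\}$ is admissible, so $W_i$ occurs in chains from $\{O\}$, and $W_i$ is critical. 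The tail $W_i,\ W_i\symdiff P,\ W_i\symdiff P\symdiff Q$ ends empty on $U_{k+1}$. Its lamp counts in $U_{k+1}$ are $k+1,\,d+1,\,0$, and its renormalised counts after time $i$ are at most $k$. So no argument confined to the steps from $i$ onwards can succeed; the contradiction must come from how the critical configuration was reached. Your proposed strengthening of the induction hypothesis is unspecified, and none is needed.

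The missing idea is a \emph{minimal} counterexample combined with an exchange argument. Take a good chain minimising first its length, then $\sum_j|W_j\cap U_{k+1}|$, then $\sum_j|\renorm(W_j)\cap U_k|$.
\begin{itemize}
\item Around a critical $W_t$ with $0<t<n$, the two moves $\mathcal T_1=W_t\symdiff W_{t-1}$ and $\mathcal T_2=W_{t+1}\symdiff W_t$ are distinct spread plaquettes, hence disjoint, so they commute.
\item Swapping them and invoking minimality forces $|W_{t\pm1}\cap U_{k+1}|=k+1$, and the swapped chain is again minimal (Lemma~\ref{lem:flip}).
\item For even $d$ this already contradicts parity, since a spread plaquette lies in $U_{k+1}$ and has $d+1$ points.
\item For odd $d$, take $t$ to be the \emph{first} index at which the renormalised count rises to $k+1$; this is why level $1$ is a separate base case. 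Then $W_{t-1}$ has two lamps in some iip $K_u$, and $u\in\mathcal S_1\cap\mathcal S_2$, where $\mathcal S_j=\{x:K_x\cap\mathcal T_j\neq\varnothing\}$.
\item If $W_{t+1}$ is not critical, you get a second point of $\mathcal S_1\cap\mathcal S_2$, contradicting Lemma~\ref{lem:intersect2}.
\item If $W_{t+1}$ is critical, a double exchange with $\mathcal T_3=W_{t+2}\symdiff W_{t+1}$ yields a minimal good chain in which $W_{t-1}\symdiff\mathcal T_3$ must be critical, yet it still has two lamps in $K_u$.
\end{itemize}
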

\noindent Before proving this result, let us deduce Theorem~\ref{thm:lower} from it.
\begin{proof}[Proof of Theorem~\ref{thm:lower}]
  If $U_k=x+\Lambda_{d,m}$ then $U_{k+1}=x'+\Lambda_{d,m'}$ with $m'=2m+3d$ and $x'=2x-2(e_1+\dots+e_d)$.
  We obtain by induction $U_k=\Lambda_{d,3d(2^k-1)}-2(2^k-1)(e_1+\dots+e_d)\subset B_k$, and we are done by Proposition~\ref{prop:chains}. 
\end{proof}

\begin{definition}[{Good chain}]
Given $k\in\N$, a chain $\mathbb{W} = (W_0,\dots,W_n)$ is \emph{good}, if $W_0=\{O\}$, $W_n\cap U_k=\varnothing$ and 
\[|W_i\cap U_k|\le k\]
for all $i=0,\dots,n$.
\end{definition}

\begin{definition}[{Minimal good chain}]
Given $k\in\N$, a good chain $(W_0,\dots,W_n)$ is \emph{minimal}, if, for any other good chain $(W'_0,\dots,W'_{n'})$, one of the following holds:
\begin{align*}
n&{}< n';\\
n&{}=n',&\sum_{i=0}^n|W_i\cap U_k|&{}<\sum_{i=0}^{n}|W'_i\cap U_k|;\\
n&{}=n',&\sum_{i=0}^n|W_i\cap U_k|&{}=\sum_{i=0}^{n}|W'_i\cap U_k|,&\sum_{i=0}^n|\renorm(W_i)\cap U_{k-1}|&{}\le\sum_{i=0}^n|\renorm(W'_i)\cap U_{k-1}|.
\end{align*}
\end{definition}

\begin{definition}[{Internal inverted plaquette}]
Given $k\in\N$, if $u\in U_{k-1}$, we call the inverted plaquette $K_u$ \emph{internal}.
In what follows, we abbreviate ``internal inverted plaquette'' as \emph{iip}.
\end{definition}
Recall that the union of all iip is the set $U'_k\subset U_k$.

\begin{definition}[{Critical configuration}]
Given $k\in\N$, we call a configuration $W$ \emph{critical} if inside $U'_k$ there are exactly $k$ iip, each containing exactly one ON lamp, and there are no other ON lamps in $U_k$.
\end{definition}

Clearly, a configuration is critical if and only if $|W\cap U_k|=|\renorm(W)\cap U_{k-1}| = k$.

\begin{definition}[{Spread plaquette}]
\label{def:spread}
Given $k\in\N$, we say that a plaquette is \emph{spread} if it is contained in $U'_k$.
\end{definition}
Note that a spread plaquette is necessarily of type \ref{item:intersection:c} in Lemma~\ref{lem:intersection} (i.e.\ it intersects $(d+1)$ inverted plaquettes).

\begin{lemma}\label{lem:critical}
Let $k\in\N$ and let $W$ be a critical configuration, let $\mathcal{T}$ be a plaquette, and assume $W\cap U_k\neq (W\symdiff\mathcal T)\cap U_k$
and $|(W\symdiff\mathcal{T})\cap U_k|\le k$.
Then $\mathcal{T}$ is a spread plaquette.
\end{lemma}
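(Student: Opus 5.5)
The plan is to first force $\mathcal T\subseteq U_k$ using Lemma~\ref{lem:intersect2}, then count ON lamps to show $\mathcal T$ contains at least two of them, and finally use Lemma~\ref{lem:intersection} together with the simplex property of $U_{k-1}$ to place $\mathcal T$ inside $U'_k$.

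\emph{Step 1: $\mathcal T\subseteq U_k$.} Since toggling $\mathcal T$ changes $W\cap U_k$, the plaquette $\mathcal T$ meets $U_k$. By Lemma~\ref{lem:intersect2}\ref{item:intersect2:1}, either $\mathcal T\subseteq U_k$ or $\mathcal T\cap U_k=\{p\}$ for a single point $p$. Suppose the latter.
\begin{itemize}
\item If $p\notin W$, then $|(W\symdiff\mathcal T)\cap U_k|=k+1$, contradicting the hypothesis.
\item If $p\in W$, then, since $W$ is critical, $p$ lies in some iip $K_u\subseteq U'_k$. So $\mathcal T$ is a plaquette intersecting $U'_k$. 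By the construction of $U_k$ (the smallest simplex containing in full every plaquette meeting $U'_k$), this gives $\mathcal T\subseteq U_k$. That contradicts $|\mathcal T\cap U_k|=1<d+1$.
\end{itemize}
Hence $\mathcal T\subseteq U_k$.

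\emph{Step 2: counting.} With $\mathcal T\subseteq U_k$, all $d+1$ points of $\mathcal T$ are toggled inside $U_k$, so
\[|(W\symdiff\mathcal T)\cap U_k|=k+(d+1)-2|\mathcal T\cap W|.\]
The bound $\le k$ forces $|\mathcal T\cap W|\ge (d+1)/2$. For $d\ge2$ this means $\mathcal T$ contains at least two ON lamps of $W$. By criticality, these lie in two distinct iip, say $K_u$ and $K_v$ with $u\neq v$ and $u,v\in U_{k-1}$.

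\emph{Step 3: $\mathcal T\subseteq U'_k$.} By Lemma~\ref{lem:intersection}\ref{item:2}, a plaquette meeting two distinct inverted plaquettes must be of type~\ref{item:intersection:c}. So $\mathcal T=2x+T_d$, and it meets exactly $K_x,K_{x+e_1},\dots,K_{x+e_d}$, one point in each. Thus $\{u,v\}\subseteq x+T_d$. Since $x+T_d$ is a plaquette meeting the simplex $U_{k-1}$ in at least two points, Lemma~\ref{lem:intersect2}\ref{item:intersect2:1} gives $x+T_d\subseteq U_{k-1}$. Therefore every one of $K_x,K_{x+e_i}$ is internal, and $\mathcal T\subseteq\bigcup_{y\in x+T_d}K_y\subseteq U'_k$. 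So $\mathcal T$ is spread.

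\emph{The case $d=1$.} The counting in Step~2 only yields one ON lamp, so this case (if needed) is treated separately. Here $\mathcal T\cap W\neq\varnothing$, so $\mathcal T$ meets some iip. Type~\ref{item:intersection:a} is therefore excluded. In type~\ref{item:intersection:b}, $\mathcal T$ equals the two-point iip $K_u$ itself, so $\mathcal T\subseteq U'_k$. Type~\ref{item:intersection:c} is handled as above.

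\emph{Main obstacle.} The only delicate point is Step~3: two internal inverted plaquettes alone do not obviously make all $d+1$ of them internal. This is resolved by applying the ``all or at most one point'' dichotomy of Lemma~\ref{lem:intersect2} to the renormalised plaquette $x+T_d$ relative to $U_{k-1}$. That in turn uses that each $U_j$ is a simplex, which is part of the recursive construction.
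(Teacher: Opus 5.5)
Your argument for $d\ge2$ is correct and follows essentially the paper's proof. You force $\mathcal T\subseteq U_k$ from the defining property of $U_k$; your Step~1 case split is just a longer way of saying that $\mathcal T$ must switch off a lamp of $W\cap U_k\subseteq U'_k$. You then use $k+(d+1)-2|\mathcal T\cap W|\le k$ to find two ON lamps in distinct iip, conclude type~\ref{item:intersection:c} of Lemma~\ref{lem:intersection}, and apply Lemma~\ref{lem:intersect2} to $x+T_d$ and the simplex $U_{k-1}$.

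Drop the paragraph on $d=1$, though. In type~\ref{item:intersection:c} you only get one ON lamp there, so that case cannot be ``handled as above''. In fact the lemma is false for $d=1$. Take $k=1$, so $U_1=\{-2,\dots,1\}$ and $U'_1=K_0=\{-1,0\}$, with $W=\{0\}$ and $\mathcal T=\{0,1\}$. All hypotheses hold, yet $\mathcal T\not\subseteq U'_1$. This is why the paper's proof relies on $d+1\ge3$.
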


\begin{proof}
  $\mathcal{T}$ intersects $U_k$, so toggling $\mathcal{T}$ must turn off at least one ON lamp of $W$ in $U_k$; otherwise we would only turn lamps on and hence would get $|(W\symdiff\mathcal T)\cap U_k|>k$.
Thus, $\mathcal{T}$ intersects $U'_{k}$, because $W\subset U'_k$.
Then, by the definition of $U_k$, we get $\mathcal{T}\subset U_k$.

Since $d+1\ge 3$, applying $\mathcal{T}$ must switch off at least two ON lamps of $W$.
Each inverted plaquette contains at most one ON lamp, so $\mathcal{T}$ intersects at least two iip.
This forces case \ref{item:intersection:c} of Lemma~\ref{lem:intersection}: applying $\mathcal{T}$ changes lamps in $(d+1)$ inverted plaquettes of the form $K_x,K_{x+e_1},K_{x+e_2},\dots, K_{x+d}$.

Since $U_{k-1}$ is a simplex, Lemma~\ref{lem:intersect2} implies
\[
|(x+T_d)\cap U_{k-1}| \in\{0, 1,d+1\}.
\]
The only case compatible with our assumptions is the third one.
In this case, all $(d+1)$ inverted plaquettes intersecting $\mathcal{T}$ are iip.
\end{proof}

\begin{lemma}[{Minimal good chains around a critical configuration}]
\label{lem:flip}
Let $k\in\N$ and let $\mathbb{W}=(W_0, W_1, \dots, W_n)$ be a minimal good chain.
Let $0 < i < n$ be an index such that $|\renorm(W_i)\cap U_{k-1}| = k$.
Set $\mathcal{T}_1\coloneqq W_i\symdiff W_{i-1}$ and $\mathcal{T}_2\coloneqq W_{i+1}\symdiff W_i$, and set $W'\coloneqq W_{i-1}\symdiff\mathcal{T}_2$.
Then
\begin{enumerate}
\item $\mathcal{T}_1$ and $\mathcal{T}_2$ are disjoint spread plaquettes;
\item $|W_{i-1}\cap U_k| = |W_{i+1}\cap U_k| = k$;
\item the configurations $W_{i\pm 1}$ contain no ON lamps in $U_k\setminus U'_k$;
\item the configuration $W'$ is also critical;
\item the sequence $(W_0, W_1, \dots, W_{i-1}, W', W_{i+1}, \dots, W_n)$ is also a minimal good chain.
\end{enumerate}
\end{lemma}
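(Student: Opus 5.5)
The first step is to show that $W_i$ is critical. Every iip $K_u$ with $u\in\renorm(W_i)\cap U_{k-1}$ contains an odd, hence positive, number of ON lamps of $W_i$. Since $|\renorm(W_i)\cap U_{k-1}|=k$, this gives $|W_i\cap U'_k|\ge k$. Goodness gives $|W_i\cap U_k|\le k$, so all inequalities are equalities: $W_i$ has exactly one ON lamp in each of $k$ iip and no other ON lamp in $U_k$.

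Next I show that neither step $W_{i-1}\to W_i$ nor $W_i\to W_{i+1}$ is trivial inside $U_k$. Suppose $W_{i+1}\cap U_k=W_i\cap U_k$, so that $\mathcal T_2\cap U_k=\varnothing$ (or the step is empty). Then delete step $i\to i+1$ and replace $W_j$ by $W_j\symdiff\mathcal T_2$ for $j\ge i+2$. The result is a chain of length $n-1$ starting at $\{O\}$, with the same traces on $U_k$ and ending outside $U_k$, contradicting minimality. The same trick with $\mathcal T_1$ works: delete step $i-1\to i$ and xor the tail from index $i+1$ on with $\mathcal T_1$. Here it matters that we modify the tail and not the head, since $W_0=\{O\}$ must be preserved. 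Likewise $\mathcal T_1=\mathcal T_2$ is excluded, because then $W_{i-1}=W_{i+1}$ and two steps can be removed.

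Lemma~\ref{lem:critical} now applies to $\mathcal T_2$ from $W_i$, and to $\mathcal T_1$ from $W_i$ by reversing the step. Goodness gives $|W_{i\pm1}\cap U_k|\le k$, so both plaquettes are spread. This already gives the third item: a spread plaquette lies in $U'_k$, so no lamp of $U_k\setminus U'_k$ changes, and $W_i$ had none there.

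For the second item I count. A spread plaquette meets $d+1$ iip in one point each. If it switches off $a$ of the $k$ ON lamps, the new count in $U_k$ is $k+d+1-2a$, and the new value of $|\renorm(\cdot)\cap U_{k-1}|$ is also $k+d+1-2a$. To force this to equal $k$, I would use the lexicographic minimality. Suppose, say, $|W_{i+1}\cap U_k|<k$. I would try to reroute through a configuration with fewer ON lamps: either replace $W_i$ by $W'=W_{i-1}\symdiff\mathcal T_2$, or compare with a shortened chain, so as to lower $\sum_j|W_j\cap U_k|$ with the same $n$. This is the step I expect to be the main obstacle.

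Disjointness of $\mathcal T_1$ and $\mathcal T_2$ is the other delicate point. By Lemma~\ref{lem:intersect2} they share at most one point. If they share a point $p$, the two toggles of $p$ cancel, and I would do a case analysis on whether $p$ is ON in $W_i$. The aim is to show that $W_{i-1}\symdiff W_{i+1}$, which then has $2d$ points spread over the iip, forces either a too-large count at $W_i$ or a shorter or cheaper chain.

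Once disjointness and the count $k$ at $W_{i\pm1}$ are in place, the remaining items follow. In $W'=W_{i-1}\symdiff\mathcal T_2=W_i\symdiff\mathcal T_1\symdiff\mathcal T_2$, the plaquette $\mathcal T_2$ acts on iip lamps exactly as it did at $W_i$, since $\mathcal T_1$ and $\mathcal T_2$ are disjoint. Hence $\mathcal T_2$ switches off the same lamps it switched off at $W_i$, and its off/on balance is the same. So $|W'\cap U_k|=k$, with the lamps still in distinct iip, and $W'$ is critical. The modified sequence is a chain, because $W'\symdiff W_{i-1}=\mathcal T_2$ and $W_{i+1}\symdiff W'=\mathcal T_1$. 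It is good, and it has the same length and the same three sums as $\mathbb W$, since $|W'\cap U_k|=|\renorm(W')\cap U_{k-1}|=k$ exactly as for $W_i$. Therefore it is again a minimal good chain.
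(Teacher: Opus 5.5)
Your arguments for the criticality of $W_i$, for the non-triviality of both steps inside $U_k$ (the tail-shifted shortening is correct, and more explicit than the paper's one-line remark), for $\mathcal T_1\neq\mathcal T_2$, for spreadness and for item~3 are fine. However, the two steps you flag as obstacles are left unproved, and each has a short resolution you are missing. \emph{Disjointness needs no case analysis.} As noted after Definition~\ref{def:spread}, a spread plaquette has the form $2u+T_d$. If $2u+s=2v+t$ with $s,t\in T_d$, then $2(u-v)=t-s\in\{-1,0,1\}^d$, so $u=v$. Hence distinct spread plaquettes are disjoint, and item~1 follows from $\mathcal T_1\neq\mathcal T_2$. \emph{For item~2,} the observation in your last paragraph is exactly what is needed: by disjointness, $\mathcal T_2$ has the same off/on balance at $W_{i-1}$ as at $W_i$. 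But you must use it \emph{before} the counts are known, in the form of the exchange identity
\[|W_i\cap U_k|+|W'\cap U_k|=|W_{i-1}\cap U_k|+|W_{i+1}\cap U_k|\le 2k.\]
Since $|W_i\cap U_k|=k$, this gives $|W'\cap U_k|\le k$. So the swapped chain is good and has length $n$, and minimality (second criterion) forces $|W'\cap U_k|\ge k$. Hence all four quantities equal $k$.

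Your derivation of item~4 has a further gap. Equal off/on balance yields $|W'\cap U_k|=k$, but not that the ON lamps of $W'$ lie in distinct iip. Suppose the iip met by $\mathcal T_1$ and by $\mathcal T_2$ share some $K_u$ with $W_i\cap K_u=\varnothing$. Then $\mathcal T_1$ and $\mathcal T_2$ light two different points of $K_u$, so $|W'\cap K_u|=2$. For odd $d$ this is compatible with every count being $k$. For instance, take $d=3$, $\mathcal T_1=2v+T_3$ and $\mathcal T_2=2v+2e_1+T_3$, which both meet $K_{v+e_1}$. Let $W_i$ have ON lamps at the points of $\mathcal T_1$ in $K_v,K_{v+e_2}$, at the points of $\mathcal T_2$ in $K_{v+2e_1},K_{v+e_1+e_2}$, and $k-4$ further lamps in iip not met by $\mathcal T_1\cup\mathcal T_2$. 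Such a $W'$ is not critical, and nothing in your argument excludes it. The same misconception underlies your claim that $|\renorm(\cdot)\cap U_{k-1}|$ also equals $k+d+1-2a$ after one toggle: a spread plaquette may light a second lamp in an iip already containing one, as happens for $W_{t-1}$ in the proof of Proposition~\ref{prop:chains}. What rules this out is the third tiebreak in the definition of minimality, which you never invoke. Since
\[|\renorm(W')\cap U_{k-1}|\le|W'\cap U_k|=k=|\renorm(W_i)\cap U_{k-1}|,\]
the swapped chain is at least as good as $\mathbb W$ and hence itself minimal (item~5). A strict inequality would contradict the minimality of $\mathbb W$, so $|\renorm(W')\cap U_{k-1}|=k$ and $W'$ is critical. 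In other words, item~5 must be established first and item~4 deduced from minimality, not the other way round.
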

\begin{proof}
\noindent Clearly, $W_i$ is critical.

$\mathcal{T}_1$ and $\mathcal{T}_2$ both intersect $U_k$; otherwise we could shorten the chain.
By Lemma~\ref{lem:critical}, it follows that $\mathcal{T}_1$ and $\mathcal{T}_2$ are spread plaquettes.
In particular, they are contained in $U'_k$, which proves 3.

If $\mathcal{T}_1 = \mathcal{T}_2$, then $W_{i-1} = W_{i+1}$ and we can again shorten the chain.
Therefore, $\mathcal{T}_1$ and $\mathcal{T}_2$ are disjoint, proving 1. Indeed, distinct spread plaquettes are disjoint by \ref{item:1}. of Lemma~\ref{lem:intersection}, which applies to spread plaquettes by symmetry.

We may apply $\mathcal{T}_1$ and $\mathcal{T}_2$ in the opposite order: consider the chain
\[
\mathbb{W}'\coloneqq(W_0, W_1, \dots, W_{i-1}, W', W_{i+1}, \dots, W_n).
\]
If $|W'\cap U_k| < k$, then $\mathbb W'$ is a good chain and this contradicts the minimality of $\mathbb{W}$. Therefore, $|W'\cap U_k| \ge k$. Since $\mathcal{T}_1$ and $\mathcal{T}_2$ are disjoint,
\[
|W_i\cap U_k| + |W'\cap U_k|
=
|W_{i-1}\cap U_k| + |W_{i+1}\cap U_k|.
\]
The left-hand side is at least $2k$, while the right-hand side is at most $2k$.
Therefore, the only possibility is
\[
|W_{i-1}\cap U_k|=|W_{i+1}\cap U_k|=|W'\cap U_k|=k.
\]
This proves 2.

Next, $|\renorm(W')\cap U_{k-1}| \le |W'\cap U_{k}| = k$, so $\mathbb{W}'$ is also a minimal good chain, proving 5.
If $|\renorm(W')\cap U_{k-1}| < |W'\cap U_{k}|$, we again contradict the minimality of $\mathbb{W}$.
Hence, $|\renorm(W')\cap U_{k-1}| = k$, so $W'$ is critical, proving 4.
\end{proof}

\begin{proof}[Proof of Proposition~\ref{prop:chains}]

We proceed by induction on $k$. The base cases $k=0,1$ are straightforward, using the assumption $d\ge2$. Fix $k>1$. Assume the statement holds for $k-1$, and let us prove it for $k$.

Our goal is to show that no good chain exists.
Suppose, for the sake of contradiction, that there exists a good chain $\mathbb{W} = (W_0,\dots,W_n)$ that we may assume minimal.

Define
\[
V_i \coloneqq \renorm(W_i).
\]
Then $V_0$ consists of the single point $O$.
Moreover, $V_n\cap U_{k-1}=\varnothing$, because if $x\in U_{k-1}$ then
$K_x\subseteq U'_k\subset U_k$, so $W_n$ has no ON lamps in $K_x$.

By Lemma~\ref{lem:renorm}, the sequence $(V_0,\dots,V_n)$ is a chain.
By the induction hypothesis (applied to the chain $(V_0,\dots,V_n)$ and size parameter $k-1$), there exists an index $i$ such that
\[
|V_i\cap U_{k-1}| \ge k.
\]
Then $|V_i\cap U_{k-1}| = k$, as otherwise $|W_i\cap U_k| > k$.

Since $|V_0\cap U_{k-1}|=1$ and $V_n\cap U_{k-1}=\varnothing$, we can choose an index $t$ such that
\[
|V_{t-1}\cap U_{k-1}| < k \quad \text{and} \quad |V_t\cap U_{k-1}| = k.
\]
Since $|W_t\cap U_k| \ge |V_t\cap U_{k-1}|$ and $|W_t\cap U_k| \le k$, the only possibility is $|W_t\cap U_k| = k$.

Set $\mathcal{T}_1\coloneqq W_t\symdiff W_{t-1}$ and $\mathcal{T}_2\coloneqq W_t\symdiff W_{t+1}$.
Then, by Lemma~\ref{lem:flip}, $\mathcal{T}_1$ and $\mathcal{T}_2$ are spread plaquettes, and moreover
\[
|W_{t-1}\cap U_k|=|W_{t+1}\cap U_k|= k.
\]

If the dimension $d$ is even, we immediately obtain a contradiction with $|W_t\cap U_k|=k$, since the plaquette toggles an odd number of lamps.
For odd $d$, we need a more delicate argument.

As in Lemma~\ref{lem:flip}, set
$W'\coloneqq W_{t-1}\symdiff\mathcal{T}_2$.
By Lemma~\ref{lem:flip}, we have $|W'\cap U_k| = k$.
We will contradict the minimality of the original chain if we can show that
$|\renorm(W')\cap U_{k-1}| < |W'\cap U_k|$.

Since $W_t$ is critical, all its ON lamps lie inside iip.
By Lemma~\ref{lem:critical}, $\mathcal{T}_1$ contains no points outside iip.
Therefore, all ON lamps of $W_{t-1} = W_t\symdiff \mathcal{T}_1$ also lie inside iip.
At the same time, we know that $|V_{t-1}\cap U_{k-1}| < |W_{t-1}\cap U_k|$.
Hence, in some iip $K_u$ the configuration $W_{t-1}$ has at least two ON lamps.
It is easy to see that there are exactly two, since applying $\mathcal{T}_1$ changes the number of ON lamps in $K_u$ by exactly $1$, as $\mathcal T_1$ is a spread plaquette touching $K_u$.

For $i=1,2$ set $\mathcal S_i=\{x\in\Z^d:K_x\cap\mathcal T_i\neq\varnothing\}$, and note by Lemma~\ref{lem:intersection} that each $\mathcal S_i$ is a plaquette.
Since $W_t = W_{t-1}\symdiff\mathcal{T}_1$ is critical, we have $u \in \mathcal{S}_1$; otherwise $W_t$ would also contain two ON lamps inside $K_u$.
From Lemma~\ref{lem:flip} we know that $W' = W_{t-1}\symdiff\mathcal{T}_{2}$ is also critical.
Thus, by the same reasoning, $u\in \mathcal{S}_2$.

Therefore, $W_{t-1}$ contains two ON lamps inside $K_u$; denote them by $\ell_1\in\Z^d$ and $\ell_2\in\Z^d$.
Up to reordering, we may assume $\ell_i$ lies in $\mathcal{T}_i$ for $i=1,2$, and there are no other intersections of $\mathcal{T}_i$ with $K_u$.
Then $W_t$ contains exactly one ON lamp inside $K_u$, namely $\ell_2$, while $W'\cap K_u=\{\ell_1\}$, and $W_{t+1}$ contains no ON lamps inside $K_u$.

We now consider two cases.

\medskip

{\itshape Case 1. Assume that the configuration $W_{t+1}$ is not critical.}
We apply to $W_{t+1}$ the same reasoning as above for $W_{t-1}$.
The configuration $W_{t+1}$ has exactly $k$ ON lamps inside $U'_k$, so in some iip $K_v$ two lamps must be lit.
As before, we obtain $v\in \mathcal{S}_1\cap \mathcal{S}_2$.
Moreover, $v\neq u$, because $W_{t+1}$ has no ON lamps inside $K_u$. Therefore $|\mathcal S_1\cap\mathcal S_2|\ge2$, contradicting Lemma~\ref{lem:intersect2}.

\medskip

{\itshape Case 2. Assume that the configuration $W_{t+1}$ is critical.}
Set
\[
\mathcal{T}_3\coloneqq W_{t+2}\symdiff W_{t+1}.
\]
Apply Lemma~\ref{lem:flip} at index $i = t+1$.
We obtain $\mathcal{T}_2\neq \mathcal{T}_3$, and a minimal good chain
\[
\mathbb{W}''\coloneqq(W_0,\dots, W_{t-1}, W_t, W_t\symdiff\mathcal{T}_3, W_{t+2}, \dots, W_n)
\]
Apply Lemma~\ref{lem:flip} again, this time to the sequence $\mathbb{W}''$ at index $t$.
We obtain yet another minimal good chain
\[
\mathbb{W}'''\coloneqq(W_0,\dots, W_{t-1}, W_{t-1}\symdiff\mathcal{T}_3, W_t\symdiff\mathcal{T}_3, W_{t+2}, \dots, W_n).
\]

In addition, $\mathcal{T}_1\neq \mathcal{T}_3$ (otherwise $\mathbb W$ would not have been minimal), and the configuration $W_{t-1}\symdiff\mathcal{T}_3$ is critical.
Recall that $W_{t-1}$ has ON lamps $\ell_i\in K_u\cap\mathcal T_i$, for $i=1,2$.
Neither of these lamps lies in $\mathcal{T}_3$, since distinct spread plaquettes are disjoint.
Therefore, the configuration $W_{t-1}\symdiff\mathcal{T}_3$ cannot be critical, because it still contains at least two ON lamps inside $K_u$.

This contradiction completes the inductive step and hence the proof of the proposition.
\end{proof}

\subsection{Design of entangled configurations}
\label{subsec:complexity}
Recall that a \emph{plaquette} is any translate \(x+T_d\) of the set \(T_d=\{0,e_1,e_2,\dots,e_d\}\).
\begin{definition}[Admissible configuration]
\label{def:admissible}
A set \(W\subset \mathbb{Z}^d\) is called \emph{admissible} if it is the sum modulo \(2\) of finitely many plaquettes.
\end{definition}

The purpose of this subsection is to produce small, admissible configurations that require a large number of additional ON lamps to be turned completely off. We call these \emph{entangled} configurations.

\subsubsection{Small admissible configurations}

Admissible sets have a simple algebraic interpretation. Let
\[
R_d \coloneqq \mathbb{F}_2\left[x_1^{\pm 1},\ldots,x_d^{\pm 1}\right]
\]
be the ring of Laurent polynomials over \(\mathbb{F}_2\) in the variables
\(x_1,\ldots,x_d\). Its elements are finite sums of the form
\[
\sum_{\alpha \in \mathbb{Z}^d} c_\alpha x^\alpha,
\qquad
c_\alpha \in \mathbb{F}_2,
\qquad
x^\alpha = x_1^{\alpha_1}\cdots x_d^{\alpha_d},
\]
where only finitely many coefficients \(c_\alpha\) are nonzero.

\begin{prop}\label{prop:polymom}
A finite set \(W \subset \mathbb{Z}^d\) is admissible if and only if the Laurent polynomial
\[
\sum_{(t_1,\dots,t_d)\in W} \prod_{i=1}^d x_i^{t_i}
\]
is divisible in \(R_d\) by \(1+x_1+\cdots+x_d\).
\end{prop}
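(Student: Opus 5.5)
The plan is to identify finite subsets of $\Z^d$ with elements of $R_d$ through the map $\phi(W)=\sum_{t\in W}x^t$, which is an $\mathbb F_2$-linear bijection between finite sets (with symmetric difference as addition) and $R_d$. Under this map, the plaquette $a+T_d$ goes to $x^a(1+x_1+\cdots+x_d)$. So the proof reduces to matching "sum modulo $2$ of finitely many plaquettes" with "multiple of $P\coloneqq 1+x_1+\cdots+x_d$ in $R_d$".

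For the forward direction, suppose $W=\sum_{j=1}^m(a_j+T_d)$ modulo $2$. Linearity of $\phi$ gives
\[\phi(W)=\Bigl(\sum_{j=1}^m x^{a_j}\Bigr)P,\]
and the cofactor $\sum_j x^{a_j}$ lies in $R_d$. Any repeated plaquettes cancel in pairs, which is consistent with working over $\mathbb F_2$.

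For the converse, suppose $\phi(W)=QP$ with $Q\in R_d$. Write $Q=\sum_{a\in A}x^a$, where $A$ is a finite set; this is possible because coefficients lie in $\mathbb F_2$ and $Q$ is a Laurent polynomial. Then
\[\phi(W)=\sum_{a\in A}x^aP=\phi\Bigl(\sum_{a\in A}(a+T_d)\Bigr),\]
and injectivity of $\phi$ gives $W=\sum_{a\in A}(a+T_d)$ modulo $2$. Hence $W$ is admissible.

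There is no real obstacle here; the proof is a direct translation. The only point needing care is that divisibility must be taken in the Laurent ring $R_d$ rather than in the polynomial ring, because plaquettes may sit at arbitrary translates $a\in\Z^d$, and hence $Q$ may contain negative exponents. The fact that $\phi$ is a bijection between finite sets and $R_d$ is immediate from the definition of $R_d$ as finitely supported $\mathbb F_2$-valued functions on $\Z^d$, with multiplication by $x^a$ corresponding to translation by $a$.
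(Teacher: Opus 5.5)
Your proposal is correct and matches the paper's proof: both rest on the plaquette $a+T_d$ corresponding to $x^a(1+x_1+\cdots+x_d)$ and on symmetric difference corresponding to addition in $R_d$. You have simply written out both directions and the injectivity of $\phi$ explicitly, where the paper leaves them implicit.
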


\begin{proof}
This follows directly from the fact that a translate \(a+T_d\) corresponds to the Laurent monomial \(x^a\) multiplied by \(1+x_1+\cdots+x_d\), while summation of configurations modulo \(2\) corresponds to addition in \(R_d\).
\end{proof}
Thus, under this correspondence, finite configurations are represented by Laurent polynomials, and admissible configurations form the principal ideal generated by \(1+x_1+\cdots+x_d\). We shall not need any deeper algebraic properties of this representation.

Let \(k\geq 0\) be an integer. By the Frobenius endomorphism,
\[
(1+x_1+\dots+x_d)^{2^k}
=
1+x_1^{2^k}+\dots+x_d^{2^k}.
\]
Therefore the set
\[
2^kT_d = \left\{0,2^ke_1,\dots, 2^ke_d\right\}
\]
is also admissible. We shall call these sets, and all their translates, \emph{large plaquettes}.

For \(d=1\), an admissible set is simply any finite set of even cardinality. In higher dimensions, admissible sets are much more rigid. In particular, admissible sets of small cardinality can be completely described.

\begin{lemma}\label{le:small_admissible}
Let \(d \geq 2\), and let \(X\subset \mathbb{Z}^d\) be an admissible set such that
\[
0 < |X| \leq d+1.
\]
Then \(X\) is a large plaquette.
\end{lemma}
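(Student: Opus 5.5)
We argue by strong induction on the diameter of $X$, using the algebraic description of Proposition~\ref{prop:polymom}. Write $L=1+x_1+\dots+x_d\in R_d$ and let $P_X$ be the Laurent polynomial of $X$, so that $L\mid P_X$. The plan has three parts: a lower bound showing $|X|\ge d+1$ with equality forced to be very structured, a descent lemma handling sets contained in a single coset of $2\Z^d$, and a parity analysis handling all other sets.

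\emph{Descent lemma.} Suppose $X\subset r+2\Z^d$, i.e.\ $P_X=x^rP'(x_1^2,\dots,x_d^2)$. We claim that then $L\mid P'$, so $Y=\{y:r+2y\in X\}$ is admissible with $|Y|=|X|$ and smaller diameter. Since $x^r$ is a unit, $L\mid P'(x^2)$. Over $\overline{\bbF}_2$, Frobenius is a bijection of the torus $(\overline{\bbF}_2^*)^d$. If $L(z)=0$, write $z=y^2$ componentwise; then $L(y)^2=L(z)=0$, so $L(y)=0$, hence $P'(y^2)=0$, i.e.\ $P'(z)=0$. The polynomial $L$ is irreducible, being of degree one. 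Applying the Nullstellensatz in the Laurent ring (clear denominators by a monomial) gives $L\mid P'$. By induction $Y$ is a large plaquette $a+2^kT_d$, hence $X=(r+2a)+2^{k+1}T_d$ is one too.

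\emph{Parity analysis when $X$ meets at least two cosets.} Write $P_X=LQ$ and decompose by residues mod $2$:
\[
Q=\sum_{s\in\{0,1\}^d}x^sQ_s(x^2),\qquad P_X=\sum_r x^rP_r(x^2).
\]
Expanding $LQ$ coset by coset, each $P_r$ is a sum of $Q_r$ and of the monomial shifts $x_iQ_s$ with $s+e_i\equiv r \pmod 2$. Each nonzero $Q_s$ therefore feeds the $d+1$ distinct cosets $s$ and $s\oplus e_i$, $i=1,\dots,d$, which form a ``plaquette'' in the hypercube $\{0,1\}^d$. Cancellation in a coset $r$ can only come from several $s$ adjacent to $r$ in the hypercube.

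I will first treat the case where only one $Q_s$ is nonzero. Then $X$ meets $d+1$ cosets, and $|X|\le d+1$ forces one point in each. Comparing the $x_i$-shifted copies of the single monomial support then forces $Q_s$ to be a monomial, so $X$ is an ordinary plaquette, which is the case $k=0$. In general, I will choose the support $S=\{s:Q_s\ne0\}$ and a generic extremal direction. An extremal element of $S$, meaning a vertex of the hypercube that is extreme for a suitable ordering, should contribute to $d$ cosets that no other element of $S$ can cancel completely. One must also count that each nonempty coset carries at least one point of $X$. Together these should give $|X|>d+1$ unless $|S|=1$, or unless all contributions cancel except in a single coset, which is the descent case.

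The main obstacle is exactly this counting step. Two nonzero $Q_s,Q_{s'}$ with $s\oplus s'$ of weight $2$ share two cosets, and then cancellation between $Q_s$ and a shifted $Q_{s'}$ inside the Laurent polynomials $P_r$ must be controlled. My first attempt will be to pair each coset with its monomial-order leading term: take, for each $i$, a monomial order making $x_i$ dominant. This aims to exhibit $d+1$ surviving extreme monomials $x_i\cdot\mathrm{lead}_i(Q)$ and $\mathrm{lead}_0(Q)$ that are pairwise distinct unless $X$ collapses into one coset. If the leading-term argument fails to separate them, I would fall back on the renormalisation map $\renorm$ and the chain structure of Lemma~\ref{lem:renorm} to transfer the problem to a configuration of smaller diameter.
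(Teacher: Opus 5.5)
There is a genuine gap: the step that carries the actual content of the lemma is left open.

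The parts you do carry out are correct. The descent step can even be shortened: over $\mathbb{F}_2$ one has $P'(x)^2=P'(x_1^2,\dots,x_d^2)$, so $L\mid P'^2$, hence $L\mid P'$ since $L$ is irreducible in the factorial ring $R_d$. The case of a single nonzero $Q_s$ is also fine, since each of the $d+1$ cosets receives an uncancelled copy of $Q_s$ and $|X|=(d+1)|\operatorname{supp}Q_s|$.

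The open step is ruling out an admissible $X$ with $|X|\le d+1$ that meets at least two cosets of $2\mathbb{Z}^d$ while at least two $Q_s$ are nonzero. There you only say what ``should'' happen. You give no reason why an extremal $s\in S$ keeps $d$ uncancelled cosets outside the descent case. Moreover, counting occupied cosets cannot suffice. For $X=\{e_1,e_2,2e_1,2e_2\}$ (the sum of $T_2$ and $2T_2$) one has $Q=x_1+x_2$, and the coset of $e_1+e_2$ is emptied by cancellation. So $X$ meets only $3=d+1$ cosets, and one must detect the two points in the even coset. That means controlling cancellation inside a coset, which is exactly what you flag as ``the main obstacle'' without resolving it. The first part of your plan, a lower bound $|X|\ge d+1$ with a structured equality case, is also never supplied. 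As written, the proof is incomplete.

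Your leading-term idea is the right tool, but it should replace the parity analysis rather than serve it.

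\begin{enumerate}
\item The extreme monomials $\mathrm{lead}_0(Q)$ and $x_i\,\mathrm{lead}_i(Q)$ are always pairwise distinct, regardless of cosets. They are the vertices of $\operatorname{conv}(X)=\operatorname{conv}(T_d)+\operatorname{conv}(\operatorname{supp}Q)$ coming from the $d+1$ vertices of the simplex. This gives $|X|\ge d+1$ at once.
\item If equality holds, $\operatorname{conv}(X)$ is a $d$-simplex with vertex set $X$. Each facet of $\operatorname{conv}(T_d)$ plus the corresponding face of $\operatorname{conv}(\operatorname{supp}Q)$ is a facet of the sum. So the facet normals are $-e_1,\dots,-e_d,e_1+\dots+e_d$, whence $X=c+mT_d$ with $m\ge 1$.
\item After translating to $c=0$, $Q$ is a genuine polynomial. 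Substituting $x_2=1+x_1$ and $x_3=\dots=x_d=0$ into $1+x_1^m+\dots+x_d^m=LQ$ gives $1+x_1^m+(1+x_1)^m=0$. Lucas' theorem then forces $m$ to be a power of $2$.
\end{enumerate}

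This is the paper's two-dimensional argument run directly in dimension $d$. The paper instead inducts on $d$: it applies the induction hypothesis to the slice of $X$ of minimal first coordinate, then subtracts a large plaquette.
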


\begin{proof}
We prove the statement by induction on \(d\).

\medskip

\noindent\emph{Base case: \(d=2\).}
This case is proved, for instance, in \cite{Arenas-Carmona08}*{Lemma 5.6}. For the reader's convenience, we recall one possible argument.

The \emph{Newton polytope} of a Laurent polynomial is the convex hull of its support. Under multiplication of Laurent polynomials, Newton polytopes add by Minkowski summation. The Newton polytope of \(1+x_1+x_2\) is the triangle \(\operatorname{conv}(T_2)\). Hence Proposition~\ref{prop:polymom} implies that every admissible set \(X\subset\mathbb{Z}^2\) has
\begin{enumerate}
    \item an edge with outer normal \(-e_2\);
    \item an edge with outer normal \(-e_1\);
    \item an edge with outer normal \(e_1+e_2\).
\end{enumerate}
It follows at once that if \(|X|\leq 3\), then \(X\) is homothetic to \(T_2\) with an integral scaling factor. After translating \(X\), we may assume that
\[
X = \{(0,0), (a,0), (0,a)\}
\]
for some positive integer \(a\). It remains to show that \(a\) is a power of \(2\). Set
\[
P(x_1,x_2)=1+x_1^a+x_2^a.
\]
Since \(P\) is divisible by \(1+x_1+x_2\), the polynomial \(P(x_1,1+x_1)\) must vanish identically. Over \(\mathbb{F}_2\),
\[
0=P(x_1,1+x_1)
=
1+x_1^a+(1+x_1)^a
=
\sum_{i=1}^{a-1}\binom{a}{i}x_1^i.
\]
Thus all intermediate binomial coefficients \(\binom{a}{i}\), \(1\leq i\leq a-1\), are even. By Kummer's theorem, equivalently by Lucas' theorem, this is possible only when \(a\) is a power of \(2\). Hence \(X\) is a large plaquette.

\medskip

\noindent\emph{Induction step.}
Assume that the lemma has already been proved in dimension \(d-1\), we prove it in dimension \(d\).

Let \(X\subset\mathbb{Z}^d\) be admissible. Translating \(X\), we may assume that the minimum first coordinate among points of \(X\) is \(0\). Consider the Laurent polynomial
\[
P(x_1,\dots,x_d)
=
\sum_{(t_1,\dots,t_d)\in X}\prod_{i=1}^d x_i^{t_i}.
\]
By Proposition~\ref{prop:polymom},
\[
P(x_1,\dots,x_d)
=
(1+x_1+\dots+x_d)Q(x_1,\dots,x_d)
\]
for some Laurent polynomial \(Q\).

Let \(b\) be the minimum exponent of \(x_1\) among the nonzero monomials of \(Q\). Since the minimum exponent of \(x_1\) among the monomials of \(P\) is \(0\), we have \(b=0\). Write
\[
Q=Q_1+Q_2,
\]
where all monomials of \(Q_1\) have \(x_1\)-degree \(0\), and all monomials of \(Q_2\) have strictly positive \(x_1\)-degree. Then
\[
P_0\coloneqq Q_1(1+x_2+x_3+\dots+x_d)
\]
is precisely the sum of all monomials of \(P\) with \(x_1\)-degree \(0\). Therefore \(P_0\) has at most \(d+1\) monomials. If \(P_0\) had exactly \(d+1\) monomials, then, since \(|X|\leq d+1\), we would have \(P=P_0\). This is impossible: the Newton polytope of \(P_0\) has dimension at most \(d-1\), whereas every nonzero multiple of \(1+x_1+\dots+x_d\) has \(d\)-dimensional Newton polytope.

Thus \(P_0\) has at most \(d\) monomials. The support of \(P_0\) is an admissible set in \(\mathbb{Z}^{d-1}\), so by the induction hypothesis,
\[
P_0
=
x_2^{t_2}\cdots x_d^{t_d}
\left(1+x_2^{2^k}+\dots+x_d^{2^k}\right)
\]
for some integers \(t_2,\dots,t_d\) and some integer \(k\geq 0\). In particular, \(P_0\) has exactly \(d\) nonzero monomials, and \(P\) has exactly \(d+1\) nonzero monomials. At this point we have also proved that, in dimension \(d\), there are no nonempty admissible sets of cardinality strictly smaller than \(d+1\).

Now consider
\[
P
-
x_2^{t_2}\cdots x_d^{t_d}
\left(1+x_1^{2^k}+x_2^{2^k}+\dots+x_d^{2^k}\right).
\]
This Laurent polynomial is divisible by \(1+x_1+\dots+x_d\). On the other hand, it has at most two nonzero monomials. Since there are no nonempty admissible sets of cardinality smaller than \(d+1\), this polynomial must be zero. Hence
\[
P
=
x_2^{t_2}\cdots x_d^{t_d}
\left(1+x_1^{2^k}+\dots+x_d^{2^k}\right),
\]
so \(X\) is a large plaquette. The induction step is complete.
\end{proof}

\subsubsection[The metric rho and an auxiliary lemma]{The metric \boldmath\(\rho\) and an auxiliary lemma}

Let \(d\in\mathbb{N}\). We call a vector \emph{long} if it has one of the forms
\[
\pm 2^k e_i
\qquad\text{or}\qquad
\pm 2^k(e_i-e_j),
\]
where \(k\geq 0\) is an integer and \(1\leq i<j\leq d\). Equivalently, long vectors are precisely directed edges of large plaquettes.

Define a metric \(\rho\) on \(\mathbb{Z}^d\) as follows: \(\rho(x,y)\) is the minimum number of long vectors whose sum is \(x-y\).

\begin{lemma}[Exponential decay in $\rho$]\label{le:long_sum}
Let
\[
\lambda_d=\frac{d}{d+1}.
\]
Let \(X\subset\mathbb{Z}^d\) be a nonempty admissible set, and let \(\alpha\in X\). Then
\begin{equation}
\label{eq:rho_sum}
\sum_{x\in X\setminus\{\alpha\}}
\lambda_d^{\rho(x,\alpha)-1}
\geq 2.
\end{equation}
\end{lemma}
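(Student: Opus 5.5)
The plan is a minimal-counterexample argument driven by the Frobenius identity $(1+x_1+\dots+x_d)^2=1+x_1^2+\dots+x_d^2$, i.e.\ by the $2$-adic self-similarity already used to define large plaquettes.

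\emph{Base case.} If $X$ is a large plaquette, then every $x\in X\setminus\{\alpha\}$ differs from $\alpha$ by a single long vector. So $\rho(x,\alpha)=1$, and the left-hand side of \eqref{eq:rho_sum} equals $d\ge 2$. By Lemma~\ref{le:small_admissible} this covers every admissible $X$ with $|X|\le d+1$.

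\emph{Reduction.} I would take a counterexample $X$, translated so that $\alpha=0$, with $\sum_{x\in X}\|x\|_\infty$ minimal.

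First case: all points of $X$ are congruent to $\alpha$ modulo $2$, so $X\subset 2\Z^d$. Write the polynomial of $X$ as $P(x)=P'(x^2)=P'(x)^2$. The polynomial $1+x_1+\dots+x_d$ is irreducible in the unique factorisation domain $R_d$, and it divides $P'^2$, so it divides $P'$. Hence $X/2$ is admissible. I would then check that $\rho(2x,2y)=\rho(x,y)$. The inequality $\le$ holds because doubling a long vector gives a long vector. For $\ge$, take a minimal representation of an even vector by long vectors and pair up the terms of the form $\pm e_i$, $\pm(e_i-e_j)$ by parity; the sum of these terms is even, so they combine into doubled long vectors without increasing the count. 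This step needs care. With it, $X/2$ is a smaller counterexample, a contradiction.

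Second case: $X$ meets at least two residue classes modulo $2$. Here I would imitate the renormalisation of Section~\ref{subsec:distance}, but centred at $\alpha$. Group $X$ by the translated inverted plaquettes $K_u$ (shifted so that $\alpha$ sits at a vertex $2u$) and use Lemma~\ref{lem:intersection}. Points of $X$ outside $\bigcup_u K_u$, or in a $K_u$ containing an even number of them, must be compensated by other points of $X$ at $\rho$-distance $1$. I expect this to yield a local pair or plaquette structure near $\alpha$. The aim is to show that either

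\begin{itemize}
\item[(i)] at least two points of $X\setminus\{\alpha\}$ satisfy $\rho(\cdot,\alpha)=1$, which is already enough; or
\item[(ii)] the renormalised set $\renorm(X)$ is a smaller admissible set containing the image of $\alpha$.
\end{itemize}

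In case (ii), each point of $\renorm(X)$ at $\rho$-distance $r$ is witnessed by at least one point of $X$ at distance at most $r+1$. The factor $\lambda_d=d/(d+1)$ should absorb this: one renormalised point corresponds to at most $d+1$ original points, while a loss of one step in $\rho$ costs a factor $\lambda_d$.

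\emph{Main obstacle.} The main obstacle is this mixed-parity step, and specifically bounding how $\rho$-distances can shrink or grow under $\renorm$ so that the constants close with exactly $\lambda_d=d/(d+1)$. If the direct renormalisation does not close, my fallback is the slicing induction on $d$ from the proof of Lemma~\ref{le:small_admissible}. There, the extreme slice in the $x_1$-direction is admissible in dimension $d-1$. One would apply the $(d-1)$-dimensional bound with $\lambda_{d-1}\le\lambda_d$, and treat the case where $\alpha$ lies in an interior slice by using the two extreme slices together.
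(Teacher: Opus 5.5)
\paragraph{Assessment.} Your proposal has a genuine gap: the mixed-parity step, which is the real content of the lemma, is not carried out, and the one idea that makes it work is missing.

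\paragraph{What is fine.} Your base case, your reduction to the case where some point of $X$ has an odd coordinate, and your alternative (i) are correct and agree with the paper. In the even reduction you only need the easy inequality $\rho(2v,0)\le\rho(v,0)$. All that matters is that the left-hand side of \eqref{eq:rho_sum} does not increase when $X$ is replaced by $X/2$, so the direction you flag as needing care is not needed.

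\paragraph{Where the accounting fails.} Suppose you renormalise and keep one witness $\tau(y)\in X$ for each point $y$ of the renormalised set, at $\rho$-distance at most one more than $y$. Then you only obtain $\sum_{x\in X\setminus\{\alpha\}}\lambda_d^{\rho(x,\alpha)-1}\ge 2\lambda_d<2$. Your remark that one renormalised point corresponds to at most $d+1$ original points goes the wrong way: a point of $\renorm(X)$ may have a single witness. So nothing in your sketch pays back the factor $\lambda_d$. A fixed renormalisation centred at $\alpha$ also guarantees no extra weight, does not guarantee $|\renorm(X)|<|X|$, and does not guarantee that the image of $\alpha$ survives, since the inverted plaquette of $\alpha$ contains near points which may lie in $X$.

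\paragraph{The missing idea.} The paper shifts the renormalisation so as to sacrifice an entire parity class, namely the heaviest one.
\begin{itemize}
\item With $\alpha=0$, let $X_I$ be the points of $X\setminus\{0\}$ in parity class $I\in\{0,1\}^d$, and set $S_I=\sum_{x\in X_I}\lambda_d^{\rho(x,0)-1}$. Let $J\neq\mathbf 0$ maximise $S_I$ over the non-zero classes.
\item Choose $\gamma\in T_d$ such that $\gamma-T_d$ meets neither class $J$ nor $X\setminus\{0\}$. Set $Y=\renorm(X-\gamma)$, which is admissible and contains $0$.
\item The witnesses $\tau(y)\in X\cap(\gamma+2y-T_d)$ are distinct. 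They lie in the $d+1$ parity classes $\cI_\gamma$ of $\gamma-T_d$, which contain $\mathbf 0$ but not $J$.
\item An even witness equals $2y$ and loses nothing in $\rho$; the other witnesses lose at most one step.
\item Since $S_J\ge\frac1d\sum_{I\in\cI_\gamma\setminus\{\mathbf 0\}}S_I$, the left-hand side is at least $S_{\mathbf 0}+\frac{d+1}{d}\sum_{I\in\cI_\gamma\setminus\{\mathbf 0\}}S_I$. The factor $\frac{d+1}{d}=\lambda_d^{-1}$ exactly cancels the loss, which is where the value of $\lambda_d$ comes from.
\item Moreover $|Y|<|X|$, because class $J$ is non-empty and unused, so induction on $|X|$ applies.
\end{itemize}

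\paragraph{Existence of $\gamma$ and the leftover case.} Such a $\gamma$ exists by counting. There are $d+1$ candidates, class $J$ contains at most two near points, and each bad near point rules out at most one candidate by Lemma~\ref{lem:intersect2}. This works if $X$ contains at most one near point when $d>2$, and none when $d=2$.

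The remaining case, $d=2$ with exactly one near point $\beta\in X$, needs a separate argument that your dichotomy does not provide:
\begin{itemize}
\item complete $\{\alpha,\beta\}$ to a plaquette $\{\alpha,\beta,\gamma\}$;
\item apply induction to $X\symdiff\{\alpha,\beta,\gamma\}$ based at $\gamma$;
\item use $\rho(x,\alpha)\le\rho(x,\gamma)+1$ to get a sum of at least $1+2\lambda_2>2$.
\end{itemize}
This change of base point is another reason to induct on $|X|$ rather than on $\sum_x\|x\|_\infty$.

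\paragraph{The slicing fallback.} It does not help. Slicing only sees the extreme faces of the Newton polytope of $X$, whereas $\alpha$ is arbitrary and typically interior. The points of the extreme slices can be arbitrarily $\rho$-far from $\alpha$, so they give no control near $\alpha$.
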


\begin{proof}
We argue by induction on \(|X|\).

By Lemma~\ref{le:small_admissible}, the smallest possible cardinality of a nonempty admissible set is \(d+1\). In that case \(X\) is a large plaquette. If \(\alpha\in X\), then all other \(d\) points of \(X\) are at \(\rho\)-distance \(1\) from \(\alpha\), and the left-hand side of \eqref{eq:rho_sum} is exactly \(d\). This proves the base case.

Now assume that \(|X|>d+1\). Translating \(X\), we may suppose \(\alpha=0\in X\). We also assume that \(X\) contains a point with at least one odd coordinate. Indeed, if all coordinates of all points of \(X\) are even, then we may divide all coordinates by their largest common power of \(2\). The resulting set remains admissible, and the left-hand side of \eqref{eq:rho_sum} does not increase.

Call the following \(d(d+1)\) points \emph{near}:
\[
\alpha\pm e_i,
\qquad
\alpha\pm(e_i-e_j),
\qquad
1\leq i<j\leq d.
\]
These points have \(\rho\)-distance \(1\) from \(\alpha\).

\medskip

\noindent\emph{Case 1.}
Suppose that at least two near points belong to \(X\). Each of them contributes \(1\) to the sum in \eqref{eq:rho_sum}, so the desired inequality follows immediately.

\medskip

\noindent\emph{Case 2.}
Suppose that \(d=2\) and exactly one near point belongs to \(X\). Denote this point by \(\beta\). Choose a point \(\gamma\) such that
\[
\{\alpha,\beta,\gamma\}
\]
is a plaquette. Then \(\gamma\) is also near, and \(\gamma\notin X\).

Set
\[
X' = X \symdiff \{\alpha,\beta,\gamma\},
\]
where \(\symdiff\) denotes symmetric difference. Then \(X'\) is admissible, \(|X'|<|X|\), and \(\gamma\in X'\). By the induction hypothesis,
\[
\sum_{x\in X\setminus\{\alpha,\beta\}}
\lambda_2^{\rho(x,\gamma)-1}
=
\sum_{y\in X'\setminus\{\gamma\}}
\lambda_2^{\rho(y,\gamma)-1}
\geq 2.
\]
Since \(\rho(\gamma,\alpha)=1\), we have
\[
\rho(x,\alpha)\leq \rho(x,\gamma)+1
\]
for every \(x\). Hence,
\[\sum_{x\in X\setminus\{\alpha\}}
\lambda_2^{\rho(x,\alpha)-1}\geq
1+
\sum_{x\in X\setminus\{\alpha,\beta\}}
\lambda_2^{\rho(x,\alpha)-1} \geq
1+
\lambda_2
\sum_{x\in X\setminus\{\alpha,\beta\}}
\lambda_2^{\rho(x,\gamma)-1} \geq
1+2\lambda_2
>2.
\]

\medskip

\noindent\emph{Case 3.}
Suppose either that \(d=2\) and no near point belongs to \(X\), or that \(d>2\) and at most one near point belongs to \(X\). If such a near point exists, denote it by \(\beta\).

Let $\mathcal I=\{0,1\}^d$ be the set of all binary strings of length \(d\). Partition \(\mathbb{Z}^d\) into \(2^d\) parity classes, denoted by \(\mathbb{Z}_I^d\) for all \(I\in\mathcal I\). Put
\[
X_I \coloneqq (X\setminus\{\alpha=0\})\cap \mathbb{Z}_I^d.
\]
For \(I\in\mathcal I\), define
\[
S_I
\coloneqq
\sum_{x\in X_I}
\lambda_d^{\rho(x,0)-1}.
\]
Define $\mathbf 0=00\dots 0\in\cI$. Among the sums \(S_I\) with \(I\neq \mathbf 0\), choose a maximal one and denote it by \(S_J\). If \(S_J=0\), then all points of \(X\) have all coordinates even, contrary to our reduction above. Therefore \(S_J>0\).

\begin{claim}
\label{claim:gamma}
There exists $\gamma\in T_d$ such that \(\gamma-T_d\) contains no point of \(\mathbb{Z}_J^d\) and \(\gamma-T_d\) contains no point of \(X\) other than possibly \(\alpha=0\).
\end{claim}
\begin{proof}
There are \(d+1\) possible choices for \(\gamma\). For every such \(\gamma\), all points of \(\gamma-T_d\) are either \(0\) or near points. Among all the $d(d+1)$ near points, at most two belong to the parity class \(\mathbb{Z}_J^d\). Moreover, by the assumptions of Case 3, at most one near point belongs to \(X\). Thus there are at most three bad near points when \(d>2\), and at most two when \(d=2\).

It remains to observe that each bad point rules out at most one choice of \(\gamma\in T_d\). Indeed, if the same bad point belonged to both \(\gamma_1-T_d\) and \(\gamma_2-T_d\), then the two distinct translates \(\gamma_1-T_d\) and \(\gamma_2-T_d\) would intersect in both that bad point and the origin. Equivalently, two distinct translates of \(T_d\) would intersect in two points, which is impossible because of Lemma \ref{lem:intersect2}. Hence at least one admissible choice of \(\gamma\) exists.
\end{proof}

Choose $\gamma$ as in Claim~\ref{claim:gamma}. Let \(\mathcal I_\gamma\subset\mathcal I\) be the set of parity classes represented by the points of \(\gamma-T_d\). This set consists of the parity vector of \(\gamma\) and the \(d\) parity vectors obtained from it by flipping exactly one coordinate. In particular, \(|\mathcal I_\gamma|=d+1\), and \(\mathbf 0\in\mathcal I_\gamma\). Also, by construction, \(J\notin\mathcal I_\gamma\).

Recalling the renormalisation map $\renorm$ from \eqref{eq:renorm}, set
\[
Y=\renorm(X-\gamma).
\]
By Lemma~\ref{lem:renorm}, \(Y\) is admissible. Since
\[
|X\cap(\gamma-T_d)|=1,
\]
we have \(0\in Y\). 

For every \(y\in Y\), 
\[
\gamma+2y-T_d
\]
contains an odd number of points of \(X\). Choose one of these points and denote it by \(\tau(y)\). The sets \(\gamma+2y-T_d\) are pairwise disjoint for distinct \(y\) by Lemma~\ref{lem:intersection}, so the points \(\tau(y)\) are pairwise distinct.

For $I\in\cI_\gamma$, define $Y_I=\{y\in Y\setminus\{0\}\mid\tau(y)\in X_I\}$, so we have 
\[
|Y_I|\leq |X_I|
\]
for every \(I\in\mathcal I\). Moreover, \(Y_I\) is empty unless \(I\in\mathcal I_\gamma\). In particular, \(Y_J\) is empty. Since \(S_J>0\), we have \(|X_J|>0\), and therefore \(|Y|<|X|\). Thus the induction hypothesis applies to \(Y\):
\[
\sum_{y\in Y\setminus\{0\}}
\lambda_d^{\rho(y,0)-1}
\geq 2.
\]

We now compare the contributions of \(Y\) and \(X\). If \(\tau(y)\in X_{\mathbf 0}\), then
\[
\rho(\tau(y),0)\leq \rho(y,0).
\]
If \(\tau(y)\notin X_{\mathbf 0}\), then
\[
\rho(\tau(y),0)\leq \rho(y,0)+1.
\]
Consequently,
\[
S_{\mathbf 0}
\geq
\sum_{y\in Y_{\mathbf 0}}
\lambda_d^{\rho(y,0)-1},
\]
and for every \(I\in\mathcal I_\gamma\setminus\{\mathbf 0\}\),
\[
S_I
\geq
\lambda_d
\sum_{y\in Y_I}
\lambda_d^{\rho(y,0)-1}.
\]

Since \(S_J\) is maximal among all \(S_I\) with \(I\neq\mathbf 0\), and since \(J\notin\mathcal I_\gamma\), we get
\[
\begin{aligned}
\sum_{x\in X\setminus\{\alpha\}}
\lambda_d^{\rho(x,\alpha)-1}
&\geq
\sum_{I\in\mathcal I_\gamma}S_I+S_J \\
&\geq
S_{\mathbf 0}
+
\frac{d+1}{d}
\sum_{I\in\mathcal I_\gamma\setminus\{\mathbf 0\}}S_I \\
&\geq
\sum_{y\in Y_{\mathbf 0}}
\lambda_d^{\rho(y,0)-1}
+
\frac{d+1}{d}
\sum_{I\in\mathcal I_\gamma\setminus\{\mathbf 0\}}
\lambda_d
\sum_{y\in Y_I}
\lambda_d^{\rho(y,0)-1}.
\end{aligned}
\]
By our choice \(\lambda_d=d/(d+1)\), the last expression equals
\[
\sum_{I\in\cI_\gamma}\sum_{y\in Y_I}\lambda_d^{\rho(y,0)-1}=\sum_{y\in Y\setminus\{0\}}
\lambda_d^{\rho(y,0)-1}.
\]
Therefore
\[
\sum_{x\in X\setminus\{\alpha\}}
\lambda_d^{\rho(x,\alpha)-1}
\geq
\sum_{y\in Y\setminus\{0\}}
\lambda_d^{\rho(y,0)-1}
\geq 2.
\]
The lemma follows.
\end{proof}

\subsubsection{Configurations with high energy barrier}\label{ss:highbarrier}

\begin{definition}[$r$-separated]
Let $r\in\N$. A subset $X\subset \Z^d$ is called \emph{$r$-separated} if $\rho(x,y)\geq r$ for all distinct points $x,y\in X$.
\end{definition}

\begin{lemma}[Large sets contain admissible ones]\label{le:subset_adm}
Let $d,n\in\N$. If a subset $X\subseteq \Lambda_{d,n}$ contains at least
\[
\binom{n+d-1}{d-1}+1
\]
points, then $X$ contains a nonempty admissible subset.
\end{lemma}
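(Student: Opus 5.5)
The plan is a linear-algebra (dimension counting) argument over $\mathbb F_2$. Consider the $\mathbb F_2$-vector space $\mathbb F_2^{X}$ of configurations supported in $X$, and the subspace $A$ of admissible configurations (sums of plaquettes, possibly with plaquettes sticking out of $\Lambda_{d,n}$). A nonempty admissible subset of $X$ is exactly a nonzero vector of $\mathbb F_2^X\cap A$. So it suffices to find a linear map $\phi$ from finitely supported configurations on $\Lambda_{d,n}$ to a space of dimension $\binom{n+d-1}{d-1}$ whose kernel consists of admissible configurations. Then $\phi$ restricted to $\mathbb F_2^X$ has a nontrivial kernel, because $\dim \mathbb F_2^X=|X|>\binom{n+d-1}{d-1}$, and we are done.

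To construct $\phi$, I would use the ``top layer'' $F=\{x\in\Lambda_{d,n}: x_1+\dots+x_d=n\}$. The monomials $x^\alpha$ with $\alpha\in\N^d$ and $|\alpha|=n$ are counted by $\binom{n+d-1}{d-1}$, which is exactly $|F|$.

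The key step is a reduction. Every configuration $W\subseteq\Lambda_{d,n}$ is congruent modulo admissible sets to a configuration supported on $F$. To see this, process points by increasing total degree $|x|=x_1+\dots+x_d$, starting from the lowest. If $x\in W$ with $|x|<n$, toggle the plaquette $x+T_d$. This removes $x$ and only changes points $x+e_i$, which have degree $|x|+1$ and still lie in $\Lambda_{d,n}$. Repeating until no point of degree $<n$ remains gives a configuration $\phi(W)\subseteq F$ with $W\symdiff\phi(W)$ admissible. The procedure is deterministic and compatible with addition modulo $2$: in polynomial language, it is the linear map that sends each monomial of degree $m<n$ to its expansion times $(x_1+\dots+x_d)^{n-m}$ over $\mathbb F_2$, and fixes monomials of degree $n$. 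So $\phi$ is $\mathbb F_2$-linear, from $\mathbb F_2^{\Lambda_{d,n}}$ to $\mathbb F_2^F$.

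To conclude, linearity of $\phi$ and $|X|>|F|$ give a nonempty $W\subseteq X$ with $\phi(W)=\varnothing$. Then $W=W\symdiff\phi(W)$ is admissible, which is the claim. The only points to check are the linearity claim, which I expect to be the main (though mild) obstacle, and the count $|F|=\binom{n+d-1}{d-1}$ by stars and bars.

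A cleaner way to see linearity is to define $\phi$ directly as $P\mapsto$ the degree-$n$ component of $P(x)\cdot$ (the substitution $1\mapsto x_1+\dots+x_d$ homogenisation). Equivalently, work in $R_d$ modulo the ideal $(1+x_1+\dots+x_d)$ and use the relation $1\equiv x_1+\dots+x_d$ to homogenise every monomial to degree $n$. Since $\phi$ is defined on basis vectors and extended linearly, and each basis vector $\{x\}$ differs from $\phi(\{x\})$ by an admissible set (a sum of plaquettes), linearity is immediate.
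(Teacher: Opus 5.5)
Your proof is correct and is essentially the paper's argument. Both show that every configuration in $\Lambda_{d,n}$ is congruent modulo plaquettes to one supported on a face of size $\binom{n+d-1}{d-1}$, and then conclude by dimension counting over $\mathbb F_2$. The paper uses the face $\{x_1=0\}$ and pushes lamps down with $x-e_1+T_d$, whereas you use the face $\{x_1+\dots+x_d=n\}$ and push lamps up with $x+T_d$. These two versions are exchanged by the affine involution $x\mapsto(n-x_1-\dots-x_d,x_2,\dots,x_d)$ of the simplex, which maps plaquettes to plaquettes. Your worry about linearity is unnecessary: the existence of such a reduction for every configuration already shows that the admissible subspace has codimension at most $|F|$, and that is how the paper phrases it.
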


\begin{proof}
Let $S_0$ be the face of the simplex consisting of points whose first coordinate is zero. Then
\[
|S_0|=\binom{n+d-1}{d-1}.
\]

Subsets of $\Lambda_{d,n}$ form a vector space over $\mathbb{F}_2$, and admissible sets form a linear subspace. It is enough to show that the codimension of this subspace is at most $|S_0|$. For this, we show that any lamp configuration can be transformed, by symmetric differences with plaquettes, into a configuration supported inside $S_0$.

For any point $x\in\Lambda_{d,n}\setminus S_0$, the plaquette $x-e_1+T_d$ is contained entirely in $\Lambda_{d,n}$. Hence one can switch off any lamp with positive first coordinate without affecting lamps whose first coordinate is at least that of $x$. Proceeding in decreasing order of the first coordinate transforms the configuration into one supported in $S_0$.
\end{proof}

In the metric $\rho$, balls are infinite. Nevertheless, we can estimate their density, that is, the size of their intersection with a simplex of a given size.

\begin{lemma}[Truncated $\rho$-ball size]
\label{le:size_rho}
Let $d,n\in\N$, and let $0<c<1$. Let $\alpha\in \Lambda_{d,n}$, and let
\[
A=\{x\in\Z^d\mid \rho(\alpha,x)\leq c\ln n\}.
\]
Then
\[
|A\cap \Lambda_{d,n}| \leq n^{3c\ln(4d)-c\ln c}.
\]
\end{lemma}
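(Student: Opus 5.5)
The plan is to count, for $m\coloneqq\lfloor c\ln n\rfloor$, the points $x=\alpha+v_1+\dots+v_j$ with $j\le m$ long vectors $v_i$. I would organise the count by \emph{scales} rather than by ordered sequences. This is where the $-c\ln c$ in the exponent should come from: a naive count of ordered sequences over roughly $\log_2 n$ scales would give $(\log n)^{c\ln n}=n^{c\ln\ln n}$, which is too large. Instead, write each long vector as $2^{k}u$ with $u$ in the finite set $D_d=\{\pm e_i,\pm(e_i-e_j)\}$ of $d(d+1)\le 2d^2$ \emph{shapes}. Since addition is commutative, the point $x$ is determined by the \emph{multiset} of pairs (scale, shape). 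So $|A\cap\Lambda_{d,n}|$ is at most the number of multisets of size at most $m$ of such pairs that produce a point of $\Lambda_{d,n}$.

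\emph{Step 1 (confining scales).} I would show that if $x,\alpha\in\Lambda_{d,n}$ and $\rho(x,\alpha)\le m$, then there is a representation of $x-\alpha$ by at most $m$ long vectors all of whose scales are at most $L\coloneqq\log_2 n+C_d m$, for some constant $C_d$. The idea is a $2$-adic, carry-type argument applied to a minimal representation. Group the vectors by scale and let $K$ be the top scale used. The contributions at scales $\ge L_0=\lceil\log_2 n\rceil+1$ must almost cancel, since $\|x-\alpha\|_\infty\le n$ while the lower scales contribute at most $2m\,2^{L_0}$ in norm. If there is a gap of at least $\log_2(4m)+1$ consecutive unused scales above $L_0$, then everything above the gap is a multiple of a large power of $2$ whose norm is smaller than that power, hence it is zero; those vectors can then be deleted, contradicting minimality. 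To get $L-\log_2 n=O(m)$ rather than $O(m\log m)$, I would refine this using the minimality of the representation. In a minimal representation, two vectors of the same scale and opposite shapes never occur. Moreover, a pair of equal vectors $2^k u,2^k u$ can be merged into $2^{k+1}u$, which shortens the representation. So each scale carries at most $|D_d|$ vectors and a vector at scale $k$ can only be ``absorbed'' by vectors at scales $k\pm O(1)$. This makes the total span of used scales above $L_0$ at most $O_d(m)$. \textbf{This is the step I expect to be the main obstacle}, and the first thing I would try is exactly this carry-propagation argument on a minimal representation. If it resists, a weaker bound $L=\log_2 n+O(m)$ is all that is needed, so crude constants are acceptable.

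\emph{Step 2 (counting).} Given Step 1, a point of $A\cap\Lambda_{d,n}$ is determined by a multiset of at most $m$ scales from $\{0,\dots,L\}$ together with one shape in $D_d$ per element. This gives
\[
|A\cap\Lambda_{d,n}|\le (2d^2)^m\binom{L+m+1}{m}\le (2d^2)^m\Bigl(\frac{e(L+m+1)}{m}\Bigr)^{m}.
\]
With $m\le c\ln n$ and $L+m+1\le \frac{\ln n}{\ln 2}+(C_d+2)c\ln n$, the ratio $(L+m+1)/m$ is at most $(1/\ln 2+C_d+2)/c$. This uses that $x\mapsto (e a/x)^x$ is increasing for $x\le a$, which lets us replace $m$ by $c\ln n$. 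The bound therefore becomes $n^{c\ln(2d^2)+c(1+\ln(1/\ln2+C_d+2))-c\ln c}$.

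\emph{Step 3 (constants).} It remains to check that $\ln(2d^2)+1+\ln(1/\ln 2+C_d+2)\le 3\ln(4d)=\ln(64d^3)$. This holds as long as $C_d$ from Step 1 is at most of order $d$, since then the left side is $\ln\bigl(2e\,d^2(C_d+3.5)\bigr)$. If Step 1 only yields a larger $C_d$, I would recover the slack by splitting shapes more cleverly. Alternatively, one can note that only scales $\le\log_2 n+O(1)$ matter for all but $O(1)$ vectors per scale above $L_0$ and count those separately. Small $n$, where $m=0$ and $A\cap\Lambda_{d,n}=\{\alpha\}$, is trivial.
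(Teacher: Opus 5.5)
Your route is correct but differs from the paper's. The step you flag as the main obstacle can be closed with the ingredients you already list, once they are fed back into your own gap argument.

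In a minimal representation each scale carries at most $d(d+1)/2$ vectors, because opposite shapes cancel and equal vectors merge one scale up. So all vectors of scale $\le s$ together have $\ell^\infty$-norm less than $d(d+1)2^s$, a bound independent of $m$. Now suppose $2^s>n$, the scales $s+1,\dots,s+g-1$ are unused, and $2^g\ge 1+d(d+1)$. Then the vectors of scale $\ge s+g$ sum to an element of $2^{s+g}\Z^d$ of norm less than $(1+d(d+1))2^s\le 2^{s+g}$. Hence they sum to $0$, and deleting them contradicts minimality. Runs of unused scales above $\log_2 n$ are therefore shorter than $g=O(\log d)$. This gives $L\le \log_2 n+1+gm$, i.e.\ $C_d=O(\log d)$. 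The inequality you need in Step~3, $2e\,d^2(C_d+3.45)\le 64d^3$, then holds for every $d\ge 1$, and no further tricks are required.

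The paper sidesteps Step~1 entirely. It chooses $2^{k-1}\le n<2^k$, so that reduction modulo $2^k$ is injective on $\Lambda_{d,n}$ and kills every long vector of scale $\ge k$. From \emph{any} representation of $x-\alpha$ by at most $r=\lfloor c\ln n\rfloor$ long vectors, one may therefore simply discard the high-scale ones. Then $x$ is determined by a multiset of size $r$ drawn from $\{0\}$ together with the $kd(d+1)$ long vectors of scale $<k$. The resulting count $\binom{kd(d+1)+r}{r}$ is estimated just as in your Step~2, with $C_d=0$ in your notation.

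That argument needs no minimality or carry analysis. Yours proves an additional structural fact: minimal representations between points of $\Lambda_{d,n}$ use only scales up to $\log_2 n+O_d(\rho)$ in $\Z^d$ itself. This is more than the lemma requires.
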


\begin{proof}
Put $r\coloneqq\lfloor c\ln n\rfloor$. We may assume that $r\geq1$. Choose $k$ such that
\[
2^{k-1}\leq n<2^k.
\]
Consider the quotient map
\[
\pi\colon\Z^d\to \Z^d/2^k\Z^d.
\]
Since $\pi$ does not identify distinct points of $\Lambda_{d,n}$, it is enough to prove that
\[
|\pi(A)|\leq n^{3c\ln(4d)-c\ln c}.
\]
Under $\pi$, all long vectors divisible by $2^k$ map to zero. Consider the set of
\[
m\coloneqq1+kd(d+1)
\]
vectors
\[
V\coloneqq\{0\}\cup\left\{\pm2^a e_i\mid 1\leq i\leq d,\ 0\leq a<k\right\}
\cup\left\{\pm2^a(e_i-e_j)\mid 1\leq i<j\leq d,\ 0\leq a<k\right\}.
\]
Clearly, $|\pi(A)|$ is at most the number of ways to choose $r$ elements from $V$ with repetitions. Therefore
\[
\binom{m+r-1}{r}
\leq \frac{(m+r)^r}{(r/e)^r}
\leq \left(e\left(1+\frac mr\right)\right)^{c\ln n}
= n^{c\left(1+\ln\left(1+\frac mr\right)\right)}.
\]
A routine estimate gives
\[
1+\frac mr \leq \frac{4(d+1)^2}{c}.
\]
After a further harmless simplification, we obtain
\[
|\pi(A)|\leq n^{3c\ln(4d)-c\ln c}.\qedhere
\]
\end{proof}

\begin{corollary}[Admissible separated sets exist]\label{cons:set_exists}
For any $d,n\in\N$ there exists, inside $\Lambda_{d,n}$, a nonempty admissible $(c_d\ln n)$-separated set $X$ of cardinality at most
\[
\binom{n+d-1}{d-1}+1.
\]
Here $c_d>0$ is a constant depending only on $d$.
\end{corollary}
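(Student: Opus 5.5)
We will build the set greedily: first extract a large separated subset of $\Lambda_{d,n}$ using Lemma~\ref{le:size_rho}, then apply Lemma~\ref{le:subset_adm} inside it to obtain a nonempty admissible subset. Any subset of an $r$-separated set is again $r$-separated, so the admissible subset is automatically separated. The cardinality bound then comes from minimality: we take a \emph{minimal} nonempty admissible subset.

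\textbf{Step 1 (choice of the constant).} Fix $c=c_d\in(0,1)$ small enough that the exponent $e(c)\coloneqq 3c\ln(4d)-c\ln c$ satisfies $e(c)\le 1/2$. This is possible because $e(c)\to0$ as $c\to0$. By Lemma~\ref{le:size_rho}, every truncated ball $\{x:\rho(\alpha,x)< c\ln n\}\cap\Lambda_{d,n}$ then has at most $n^{1/2}$ points. Small $n$, and the case $c\ln n<1$, are trivial: any plaquette inside $\Lambda_{d,n}$ is an admissible set of size $d+1$, and $1$-separation of distinct points is automatic.

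\textbf{Step 2 (greedy packing).} Choose points $x_1,x_2,\dots\in\Lambda_{d,n}$ one at a time, each outside the $\rho$-balls of radius $<c\ln n$ around the previously chosen points, until $\Lambda_{d,n}$ is exhausted. The resulting set $X_0$ is $(c\ln n)$-separated. It satisfies
\[
|X_0|\ge \frac{|\Lambda_{d,n}|}{n^{1/2}}=\frac{\binom{n+d}{d}}{n^{1/2}}.
\]
We need $|X_0|\ge\binom{n+d-1}{d-1}+1$. Since $\binom{n+d}{d}/\binom{n+d-1}{d-1}=(n+d)/d$, this amounts to checking
\[
\frac{n+d}{d\,n^{1/2}}>1+\frac{1}{\binom{n+d-1}{d-1}},
\]
which holds once $n$ exceeds a threshold depending on $d$. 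For smaller $n$ we can shrink $c_d$ further so that $c_d\ln n<1$, and the trivial case of Step 1 applies. Running the greedy procedure inside the whole simplex and taking the truncated ball counts from Lemma~\ref{le:size_rho} is the only quantitative point. I expect it to be the main (though mild) obstacle: one has to make sure the exponent can indeed be made below $1$, which the form $3c\ln(4d)-c\ln c$ guarantees.

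\textbf{Step 3 (admissible subset and size).} Apply Lemma~\ref{le:subset_adm} to $X_0\subseteq\Lambda_{d,n}$. It gives a nonempty admissible $X\subseteq X_0$, and $X$ is $(c_d\ln n)$-separated.

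For the cardinality bound, take $X$ minimal among the nonempty admissible subsets of $X_0$. If $|X|\ge\binom{n+d-1}{d-1}+1$, the linear-algebra argument in the proof of Lemma~\ref{le:subset_adm} shows something stronger than stated. The map sending a subset of $X$ to its class modulo admissible sets has image of dimension at most $\binom{n+d-1}{d-1}$. Hence the indicator vectors of the singletons of $X$ admit a nontrivial linear dependence involving at most $\binom{n+d-1}{d-1}+1$ of them. Concretely, we apply Lemma~\ref{le:subset_adm} to any $\binom{n+d-1}{d-1}+1$ points of $X$. This yields a nonempty admissible subset of size at most $\binom{n+d-1}{d-1}+1$. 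By minimality it equals $X$, which gives the desired bound.
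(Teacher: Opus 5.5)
Your proof is correct and follows the paper's route: a greedy $\rho$-separated packing of $\Lambda_{d,n}$, made possible by the ball bound of Lemma~\ref{le:size_rho}, followed by Lemma~\ref{le:subset_adm}. The only difference is bookkeeping. The paper chooses $c_d$ so that $n^{3c_d\ln(4d)-c_d\ln c_d}\binom{n+d-1}{d-1}<|\Lambda_{d,n}|$ and stops the greedy once it has exactly $\binom{n+d-1}{d-1}+1$ points, so the cardinality bound is automatic and your minimality step and separate small-$n$ case are not needed.
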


\begin{proof}
Choose $c_d>0$ sufficiently small so that, for all relevant $n$,
\[
n^{3c_d\ln(4d)-c_d\ln c_d}<\frac{n+d}{d}.
\]
Equivalently,
\[
n^{3c_d\ln(4d)-c_d\ln c_d}\binom{n+d-1}{d-1}<|\Lambda_{d,n}|.
\]

Put $r=\lfloor c_d\ln n\rfloor$. We first construct an $r$-separated subset of $\Lambda_{d,n}$ of cardinality
\[
\binom{n+d-1}{d-1}+1.
\]
Add points greedily. If no more than $\binom{n+d-1}{d-1}$ points have already been chosen, Lemma~\ref{le:size_rho} and the inequality above ensure that there is still a point of $\Lambda_{d,n}$ outside all $\rho$-balls of radius $r$ around the chosen points.

By Lemma~\ref{le:subset_adm}, this $r$-separated set contains a nonempty admissible subset. Any subset of an $r$-separated set is again $r$-separated.
\end{proof}

Recall that chains were introduced in Definition~\ref{def:chain}, as sequences of configurations varying by a single switch, and that intermediate configurations need not be contained in $\Lambda_{d,n}$.
\begin{theorem}[Configurations with high energy barrier]\label{th:complexity:2}
For each $d\ge 2$, there exist $\alpha_d>0$ and $n_d\in\N$ such that, for any $n\ge n_d$, there exists an admissible set $X\subseteq \Lambda_{d,n}$ with the following property. Any chain $(X_t)_{t=0}^T$ with $X_0=X$, $X_T=\varnothing$ satisfies
\[|X_t\setminus X|\ge n^{\alpha_d}+3|X\setminus X_t|/2\]
for some $i\in\{0,\dots,T\}$. In particular, $|X_t|\ge|X|+n^{\alpha_d}$.
\end{theorem}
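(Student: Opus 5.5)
The plan is to take for $X$ the set provided by Corollary~\ref{cons:set_exists}: a nonempty admissible, $r$-separated subset of $\Lambda_{d,n}$ with $r=\lfloor c_d\ln n\rfloor$ and $|X|\le\binom{n+d-1}{d-1}+1$. Then argue by contradiction. Fix a chain $(X_t)_{t=0}^T$ from $X$ to $\varnothing$ and write $Y_t=X_t\setminus X$ (extra ON lamps) and $Z_t=X\setminus X_t$ (lamps of $X$ already switched off). Since $X_t\symdiff X$ is a finite sum of plaquettes, the set $D_t\coloneqq Y_t\sqcup Z_t$ is admissible for every $t$. Suppose that $|Y_t|<n^{\alpha_d}+\tfrac32|Z_t|$ for all $t$.

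\emph{Weight comparison.} For each $t$ with $Z_t\neq\varnothing$, apply Lemma~\ref{le:long_sum} to $D_t$ at every $\alpha\in Z_t$ and sum over $\alpha$:
\[
2|Z_t|\le\sum_{\alpha\in Z_t}\sum_{z\in D_t\setminus\{\alpha\}}\lambda_d^{\rho(z,\alpha)-1}.
\]
Split the right-hand side into contributions from $z\in Z_t$ and from $z\in Y_t$.
\begin{itemize}
\item By $r$-separation, each pair of points of $Z_t$ contributes at most $\lambda_d^{r-1}$. This is where the restriction to small $|Z_t|$ is needed.
\item For $y\in Y_t$, $r$-separation implies that at most one $\alpha\in X$ satisfies $\rho(y,\alpha)<r/2$. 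Hence $y$ contributes at most $\lambda_d^{\rho(y,\alpha_y)-1}$ for its closest $\alpha_y$, plus a tail over the remaining points of $X$.
\end{itemize}
I would control that tail by layering $X$ according to $\rho$-distance from $y$ and using Lemma~\ref{le:size_rho} to bound how many separated points can lie in each layer.

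\emph{Choosing the time.} The goal is a time $t$ at which the $Z$–$Z$ terms and the tails are $o(|Z_t|)$, so that roughly $2|Z_t|\lesssim \sum_{y\in Y_t}\lambda_d^{\rho(y,\alpha_y)-1}$. Since $|Z_t|$ changes by at most $d+1$ per step and goes from $0$ to $|X|$, I would take $t$ to be the first time $|Z_t|$ exceeds a threshold $m=n^{\alpha'}$, with $\alpha'$ small compared with $c_d\ln(1/\lambda_d)$, so that $m^2\lambda_d^{r}\ll m$. Then $|Y_t|<n^{\alpha_d}+\tfrac32 m$, and one needs to show that the weight available from $Y_t$ is strictly less than $2m-o(m)$. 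Choosing $\alpha_d<\alpha'$ makes the $n^{\alpha_d}$ term negligible.

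\emph{Main obstacle.} The constant $3/2$ has to beat the per-point weight. A single extra lamp at $\rho$-distance $1$ of $\alpha$ carries weight $1$, but a point at distance $0$ relative to the shifted exponent carries $\lambda_d^{-1}$. Moreover $\tfrac32\lambda_d^{-1}\ge 2$ when $d=2$, so the naive count does not close. I expect to need a sharper local statement: when $Z_t$ has separated points, the near configuration around each $\alpha$ must contain at least about $2/\lambda_d$-worth of distinct lamps, or a large plaquette of $d\ge2$ extra points, which is the base case of Lemma~\ref{le:long_sum}. Equivalently, the task is to show that each switched-off point of $X$ needs at least $2$ private extra lamps in its $\rho$-neighbourhood of radius $r/2$, up to $o(1)$ sharing. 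I would try to prove this by rerunning the induction of Lemma~\ref{le:long_sum} restricted to the $r/2$-ball around $\alpha$, treating points outside it as a small error. The final claim $|X_t|\ge|X|+n^{\alpha_d}$ then follows immediately, since $|X_t|=|X|+|Y_t|-|Z_t|$.
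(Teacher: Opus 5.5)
Your setup is sound and already proves the theorem. The ``main obstacle'' you describe does not exist, so you do not need the ``sharper local statement'' you plan to extract from the induction of Lemma~\ref{le:long_sum}.

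Every $Y_t$-term in your double sum pairs some $y\notin X$ with some $\alpha\in Z_t\subseteq X$. Hence $y\neq\alpha$, $\rho(y,\alpha)\ge1$ and $\lambda_d^{\rho(y,\alpha)-1}\le1$; the weight $\lambda_d^{-1}$ you worry about would require $y=\alpha$. At most one $\alpha\in X$ lies within $\rho$-distance $<r/2$ of $y$. The inner sum runs only over the $|Z_t|$ points of $Z_t$, so no layering via Lemma~\ref{le:size_rho} is needed. You get directly
\[2|Z_t|\le|Z_t|^2\lambda_d^{r-1}+|Y_t|\bigl(1+|Z_t|\lambda_d^{r/2-1}\bigr).\]
Put $\gamma=c_d\ln(1+1/d)$, so that $\lambda_d^{r}\le\lambda_d^{-1}n^{-\gamma}$. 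Take $t$ with $n^{\alpha'}<|Z_t|\le n^{\alpha'}+d+1$, where $\alpha_d<\alpha'<\gamma/2$. Under your hypothesis $|Y_t|<n^{\alpha_d}+\frac32|Z_t|$, both error terms are $o(|Z_t|)$. So $|Y_t|\ge(2-o(1))|Z_t|$, and hence $|Y_t|-\frac32|Z_t|\ge(\frac12-o(1))n^{\alpha'}>n^{\alpha_d}$ for large $n$, a contradiction. The constant $3/2<2$ closes with room to spare.

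The one step you genuinely omitted is that such a $t$ exists, i.e.\ $|X|>n^{\alpha'}$; Corollary~\ref{cons:set_exists} only gives an upper bound on $|X|$. This follows from Lemma~\ref{le:long_sum} applied to $X$ itself: all its terms are at most $\lambda_d^{r-1}$, so $|X|\ge1+2\lambda_d^{1-r}\ge2\lambda_d^2n^{\gamma}$.

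With these fixes, your argument is a valid alternative to the paper's, which does not sum Lemma~\ref{le:long_sum} over all switched-off points. The paper places disjoint $\rho$-balls $U_j$ of radius $r/3$ around the points $x_j$ of $X$. It stops at the first time some $U_j$, say $U_1$, behaves ``unusually'', meaning $|X_t\cap U_1|\le1$ and $X_t\cap U_1\neq\{x_1\}$. It then applies Lemma~\ref{le:long_sum} once, to $X_t\symdiff X$ at $x_1$. At most weight $1$ can come from inside $U_1$, so weight at least $1$ must come from outside, forcing $|X_t\symdiff X|\ge\lambda_d^{1-r/3}\ge n^{\alpha_d}$. Meanwhile every other $U_j$ is still ``usual'', so each other switched-off $x_j$ has at least two new lamps in its private ball. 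This is exactly the ``two private extra lamps'' you were after, but it comes from the choice of stopping time rather than from a lemma.

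The paper's route needs no control on $|Z_t|$, no lower bound on $|X|$, and no cross-term estimates. Yours avoids the usual/unusual bookkeeping and gets the ratio $2-o(1)$ straight from the lemma. The price is the threshold time, the separation-based control of cross terms (which forces $\alpha'<\gamma/2$), and the lower bound on $|X|$.
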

\begin{proof}
By Corollary~\ref{cons:set_exists}, there exists an admissible $(c_d\ln n)$-separated subset $X\subset \Lambda_{d,n}$ with $|X|\leq C_d n^{d-1}$. Put
\[
r=c_d\ln n.
\]
Let $|X|=m$, and write
\[
X=\{x_1,\dots,x_m\}.
\]
For each $i$, define
\[
U_i\coloneqq\{y\in\Z^d\mid \rho(y,x_i)<r/3\}.
\]
The sets $U_i$ are pairwise disjoint.

Let
\[
X=X_0,X_1,\dots,X_T=\varnothing
\]
be any sequence of configurations such that the symmetric difference of any two consecutive configurations is a plaquette. We do not require the intermediate configurations to be contained in $\Lambda_{d,n}$.

Say that a configuration $X_t$ behaves \emph{unusually} inside $U_j$ if
\[
|X_t\cap U_j|\leq 1
\qquad\text{and}\qquad
X_t\cap U_j\neq \{x_j\}.
\]
Initially, the configuration behaves usually inside every $U_j$, whereas the final configuration behaves unusually inside every $U_j$. Hence, there is an index $t$ such that $X_{t-1}$ behaves usually inside all $U_j$, while $X_t$ behaves unusually inside at least one $U_j$. This index $j$ is unique, since a single plaquette cannot intersect two distinct sets $U_j$ and $U_{j'}$ when $r$ is large enough. Without loss of generality, assume that $j=1$.

Put
\[
X' = X_t \symdiff X.
\]
Then $X'$ is admissible, $x_1\in X'$, and
\[
|X'\cap U_1|\leq 2.
\]
By Lemma~\ref{le:long_sum},
\[
\sum_{y\in X'\setminus\{x_1\}} \lambda_d^{\rho(x_1,y)-1}\geq 2,
\qquad
\lambda_d=\frac d{d+1}.
\]
Inside $U_1$, the set $X'\setminus\{x_1\}$ contains at most one point, whose contribution to the sum is at most $1$. Therefore, the points of $X'\setminus U_1$ contribute at least $1$ to the sum. Each such point contributes at most $\lambda_d^{r/3-1}$, and hence
\[
|X'|\geq \lambda_d^{1-r/3}
= C'_d\left(1+\frac1d\right)^{r/3}
\geq n^{\alpha_d}
\]
for a suitable constant $\alpha_d>0$ depending only on $d$.

Call a point of $X'$ \emph{old} if it belongs to $X$, and \emph{new} otherwise. Suppose that $x_j$ is an old point of $X'$ with $j\neq1$. Since $X_t$ behaves usually inside $U_j$, and $x_j\notin X_t$, we must have
\[
|X_t\cap U_j|\geq2.
\]
Thus, each old point other than possibly $x_1$ forces at least two new points. Consequently, if $O$ and $N$ denote the numbers of old and new points in $X'$, then $N\ge 2(O-1)$, so
\[N\geq 5(O-1)/3+N/6=3O/2+|X'|/6-5/3\ge 3O/2+n^{\alpha_d}/6-5/3\ge 3O/2+n^{\alpha_d/2},\]
taking $n$ large enough in the last inequality.
\end{proof}

\subsection{Enumerative combinatorics: generating function of finite-volume groundstates}
\label{subsec:cycles}

Let $A$ and $B$ be two subsets of $\Z^d$, which we think of, now, as a space of switches.
A subset $X\subseteq B$ is called an \emph{$(A,B)$-cycle} if every translate $A+x$ that is entirely contained in $B$ intersects $X$ in an even number of points.
We denote the set of all $(A,B)$-cycles by $\mathcal{C}_{(A,B)}$.
Using symmetric difference $\symdiff$ for addition, $\mathcal{C}_{(A,B)}$ is a vector space over the field $\mathbb{F}_2$. In fact, set $V=\mathbb F_2^B$ and $W=\mathbb F_2^{\{x\mid A+x\subseteq B\}}$, with $f\colon V\to W$ given by $f(v)(x)=\sum_{y\in A+x}v(y)$; then $\mathcal C_{(A,B)}=\ker f$. Here elements of $V$ are switch configurations, whose image in $W$ is the corresponding lamp configuration.

Recall the plaquette $T_d$ from \eqref{eq:def:Td} and the size-$n$ simplex $\Lambda_{d,n}$ from \eqref{eq:def:Lambda:dn}. We are interested in the set of $(T_d,\Lambda_{d,n})$-cycles, which we denote by $\mathcal{C}_{d,n}$. Define the generating polynomial
\begin{equation}\label{eq:def:fdn}
  G_{d,n}(t)=\sum_{X\in \mathcal{C}_{d,n}} t^{|X|}.
\end{equation}

Our third main combinatorial result is the following bound:
\begin{theorem}[Polynomial estimate]\label{th:upper}
  For every $d\in \N$ there exists $a_d\in \N$ such that the following holds. For every $m\in \N$ and every $s\geqslant a_d$, if $t>0$ and $n\in \N$ satisfy
  \[t \le 1-\frac{1}{m} \qquad\text{and}\qquad n \ge m^s,\]
  then
  \[
    G_{d,n}(t) \le 1+n^{-s}.
  \]
\end{theorem}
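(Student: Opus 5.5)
The plan is to write $G_{d,n}(t)=1+\sum_{X\in\mathcal C_{d,n}\setminus\{\varnothing\}}t^{|X|}$ and control the sum by a Peierls-type counting argument. The first step is to parametrise cycles by boundary data. As in the proof of Lemma~\ref{le:subset_adm}, every plaquette $x-e_1+T_d\subseteq\Lambda_{d,n}$ with $x_1\ge1$ expresses the switch at $x$ as the sum of switches of smaller first coordinate. Hence a cycle $X$ is uniquely determined by its trace $X\cap S_0$ on the face $S_0=\{x_1=0\}$, which has $|S_0|=\binom{n+d-1}{d-1}\le Cn^{d-1}$ points. Every subset of $S_0$ extends to a cycle. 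So the nonempty cycles are in bijection with the nonempty subsets $D\subseteq S_0$, and the number of cycles whose trace has $j$ points is at most $(Cn^{d-1})^j$.

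The key intermediate statement I will aim for is a \emph{cost lemma}: there are $\delta=\delta_d>0$ and $c>0$ such that every cycle $X$ with $|X\cap S_0|=j\ge1$ satisfies
\[|X|\ge c\,j\,n^{\delta}.\]
Given this, the rest is arithmetic. Since $t\le 1-1/m$, we have $t^{|X|}\le e^{-cjn^\delta/m}$, and therefore
\[G_{d,n}(t)-1\le\sum_{j\ge1}\bigl(Cn^{d-1}e^{-cn^{\delta}/m}\bigr)^j.\]
If $n\ge m^s$ with $s\ge a_d>2/\delta$, then $n^\delta/m\ge n^{\delta/2}$. The ratio in the geometric series is then at most $Cn^{d-1}e^{-cn^{\delta/2}}$, and this is far below $n^{-s}/2$ once $n$ is large. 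The finitely many small values of $n$, and hence of $m$, can be absorbed by enlarging $a_d$ (for $m=1$ we have $t\le0$ and the claim is trivial). This yields $G_{d,n}(t)\le1+n^{-s}$.

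The main obstacle is the cost lemma, which I expect to be the heart of the proof. The simplest single-seed cycle, Pascal's simplex modulo $2$ (the Sierpiński pattern of Figure~\ref{fig:Sierpinski}), has about $n^{\log_2(d+1)}$ switches, so a bound of the form $n^\delta$ per seed is plausible. However, cancellations between many seeds must be excluded, and I will try two routes to do so.

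\emph{Renormalisation route.} By the Frobenius identity, the restriction of a cycle to a coset of $2\Z^d$, rescaled, is again a cycle in a simplex of half the size. Within one dyadic scale, the parity data should force a constant fraction of the nonzero boundary points to survive at the coarser level while producing $\ge1$ new switch each. Iterating over $\log_2 n$ scales then gives the gain $n^{\delta}$ (roughly $(1+\varepsilon)^{\log_2 n}$), in the spirit of Lemma~\ref{le:long_sum}.

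\emph{Localisation route.} Work with the boundary points of $D$ in a well-separated subfamily, which contains at least $j/\mathrm{polylog}$ of them. Around each such point, a ball of radius $n^{\delta'}$ sees only that seed's Sierpiński cone modulo admissible corrections. One then argues, as in Theorem~\ref{th:complexity:2}, via Lemma~\ref{le:long_sum}, that each such ball contains $n^{\delta}$ switches of $X$.

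I would first try the renormalisation route, since it matches the dyadic structure and the hypothesis $n\ge m^s$.
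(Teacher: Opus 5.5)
There is a genuine gap: your cost lemma is false, and the problem is not one of constants. For every $d\ge2$ the segment $X=\{ke_2:0\le k\le n\}\subseteq S_0$ is a cycle. Indeed, a plaquette $y+T_d\subseteq\Lambda_{d,n}$ meets $X$ only if $y=ke_2$, and then in exactly the two points $ke_2,(k+1)e_2$. So $j=|X|=n+1$, and $|X|\ge c\,j\,n^{\delta}$ fails as soon as $n^{\delta}>1/c$.

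More generally, every cycle contained in $S_0$ has trace equal to itself. These cycles are exactly the embedded copies of $\mathcal C_{d-1,n}$ (cf.\ part~2 of Lemma~\ref{le:cyc_layer}), and there are exponentially many of them when $d\ge3$. Heavy cancellation also happens away from the face. For $d=2$, a nonzero $2^k$-periodic trace gives a cycle whose rows $x_1\ge 2^k$ vanish, because $(1+z)^{2^k}=1+z^{2^k}$ over $\mathbb F_2$. Hence $|X|\le 2^k(n+1)$ while $j\ge\lfloor (n+1)/2^k\rfloor$.

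So the trace on a single face is not a parametrisation in which each unit of entropy costs $n^{\delta}$ switches, and neither of your routes repairs this. In the renormalisation route, take $d=2$ and $X=S_0$: the two nonempty rescaled layers $\tau_r(X)$ are again faces of the half-size simplex, so trace points survive renormalisation without creating any new switch. The localisation route would apply Lemma~\ref{le:long_sum} to cycles. That lemma is about admissible lamp sets in $\Z^d$, whereas cycles are the orthogonal complement of the span of plaquettes inside $\Lambda_{d,n}$.

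The missing idea is that lower-dimensional cycles must be controlled by induction on $d$. Since $\mathcal C_{d-1,n}$ embeds in $\mathcal C_{d,n}$ preserving size, $G_{d,n}\ge G_{d-1,n}$, so the $d$-dimensional bound already contains the $(d-1)$-dimensional one. The paper proves the recursion $G_{d,n}(t)\le\bigl(G_{d,\lfloor (n-\epsilon)/2\rfloor}(t^{1.5})\bigr)^{2^{d-1}}$ under the hypothesis $G_{d-1,n}(t^{1/4})\le 1+1/(4d)$ (Proposition~\ref{pr:cycle_rec}). Its ingredients are:
\begin{itemize}
\item each nonzero cycle $X$ is classified by a binary layer $Y=X\cap\Lambda^{k,\varepsilon}_{d,n}$ with $0<|Y|\le|X|/2$ (Lemma~\ref{le:nonempty});
\item the $2^{d-1}$ rescaled pieces of $Y$ are cycles in half-size simplices (Lemma~\ref{le:rec});
\item two cycles with the same layer are equal if $\varepsilon=0$, and differ by a $(d-1)$-dimensional cycle on the face $\{x_k=0\}$ if $\varepsilon=1$ (Lemma~\ref{le:cyc_layer}).
\end{itemize}
The last point is exactly where $G_{d-1,n}(t^{1/4})$ enters, giving the bound $t^{1.5|Y|}/(2d)$ of Lemma~\ref{le:maj}. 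The face cycles that break your lemma are thus absorbed by the inductive hypothesis. Iterating the recursion on the order of $\ln n$ times and finishing with the crude bound of Lemma~\ref{le:G_bounds} yields the theorem. Your renormalisation instinct is right, but the quantity to track is the total size $|X|$ against a half-weight layer, not the number of trace points.
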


At a high level, the proof of Theorem~\ref{th:upper} proceeds as follows (see Figure~\ref{fig:cycles}). We will prove a recursive bound (see Proposition~\ref{pr:cycle_rec}) of the form $G_{d,n}(t)\le (G_{d,n/2}(t^{1.5}))^{2^{d-1}}$, provided $G_{d-1,n}(t^{1/4})$ is sufficiently close to $1$. To do so, we consider the restriction of a cycle to the even or odd hyperplanes perpendicular to a well chosen direction among $e_1,\dots,e_d$. We notice that such sections form a collection of $2^{d-1}$ independent cycles blown up by a factor $2$ (see Lemma~\ref{le:rec}). Up to choosing the slicing direction and parity well, we are able to show that (see Lemma~\ref{le:maj}) the total contribution to the generating polynomial $G_{d,n}(t)$ of cycles with identical restrictions is smaller than $t^{1.5|Y|}$, where $Y$ is the restriction. This relies on the bound on $G_{d-1,n}(t^{1/4})$ and the fact that symmetric differences of such cycles boil down to $(d-1)$-dimensional cycles living on a facet of the simplex. Once the recursive bound is established, Theorem~\ref{th:upper} is follows by crude estimation carried out in Section~\ref{subsubsec:poly:estimates}. Morally, we just note that iterating the inequality yields $G_{d,n}(t)\le (1+t^{1.5^{\log_2 n}})^{2^{(d-1)\log_2n}}\to 1$ as $n\to\infty$.

\begin{figure}

    \centering
    \begin{tikzpicture}[x=0.5cm,y=0.5cm,radius=3pt]
\foreach \x in {0,...,8} {
    \draw[gray] (\x, 0)--(0,\x);
    \draw[gray] (\x,0)--(\x,8-\x)--(0,8-\x);}
  	\fill[color=red] (0,8) circle;
  	\fill[color=red] (0,6) circle;
  	\fill[color=blue] (0,5) circle;
  	\fill[color=red] (0,2) circle;
  	\fill[color=brown] (1,7) circle;
  	\fill (1,6) circle;
  	\fill (1,4) circle;
  	\fill (1,2) circle;
  	\fill[color=brown] (1,1) circle;
  	\fill[color=red] (2,0) circle;
  	\fill[color=red] (2,2) circle;
  	\fill[color=blue] (2,3) circle;
  	\fill[color=red] (2,4) circle;
  	\fill[color=blue] (2,5) circle;
  	\fill[color=brown] (3,5) circle;
  	\fill[color=brown] (3,1) circle;
  	\fill (3,0) circle;
  	\fill[color=red] (4,4) circle;
  	\fill[color=blue] (4,1) circle;
  	\fill (5,0) circle;
  	\fill[color=brown] (5,1) circle;
  	\fill[color=brown] (5,3) circle;
  	\fill[color=red] (6,2) circle;
  	\fill[color=blue] (6,1) circle;
  	\fill (7,0) circle;
  	\fill[color=red] (8,0) circle;
  	\end{tikzpicture}
    \caption{A typical cycle in $\cC_{2,8}$. Its four restrictions to $\Lambda_{2,8}^r$ for $r\in\{0,1\}^2$ (see \eqref{eq:def:Lambda:dnr}) are given in different colors and are all (renormalised) cycles (see Lemma~\ref{le:rec}). In this case, a suitable choice of restriction is the one to $\Lambda^{1,1}_{2,8}$ (odd columns, black and brown points; see \eqref{eq:def:Lambda:dnke}), as it is smaller than half the cycle and nonempty ($\Lambda^{2,1}_{2,8}$ also works). Notice that (in two dimensions) there is exactly one other cycle with the same black and brown sites called partial cycle. That other cycle only differs from the one depicted in the leftmost column which contains precisely the sites which are not blue or red in the figure.}
    \label{fig:cycles}
\end{figure}

\subsubsection{Basic properties of cycles}\label{subsubsec:cycles}	
	
\begin{lemma}[{The first layer determines everything}]\label{le:cyc_layer}
  Let $d,n\in \N$, and let $1\leqslant k\leqslant d$.
  Denote by $S_0$ and $S_1$ the sets of points in $\Lambda_{d,n}$ whose $k$-th coordinate is equal to $0$ and $1$ respectively.
  \begin{enumerate}
  \item\label{item:cyc_layer:1} If $X\in \mathcal{C}_{d,n}$ and $X\cap S_0=\emptyset$, then $X=\emptyset$.
  \item\label{item:cyc_layer:2} If $X\in \mathcal{C}_{d,n}$ and $X\cap S_1=\emptyset$, then $X\subseteq S_0$.
    Moreover, $X\in \mathcal{C}_{(T',S_0)}$, where $T'=T_d\setminus\{e_k\}$.
  \end{enumerate}
\end{lemma}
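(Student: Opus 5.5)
The plan is to propagate the cycle condition layer by layer in the $k$-th coordinate, using the plaquettes $x+T_d$ with $x_k=j$ that lie in $\Lambda_{d,n}$. For such a plaquette, the point $x+e_k$ lies in layer $j+1$ and all other points lie in layer $j$. So the parity of $|X\cap(x+T_d)|$ being even expresses the single point $x+e_k$ in terms of the $d$ points $x+(T_d\setminus\{e_k\})$ of layer $j$. The key observation is that every point $y\in\Lambda_{d,n}$ with $y_k\ge1$ is of the form $x+e_k$ with $x=y-e_k$. Moreover $x+T_d\subseteq\Lambda_{d,n}$: indeed $x\in\N^d$, the coordinate sum of $x+e_i$ equals that of $y$, which is at most $n$, and $x+e_i\in\N^d$. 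Hence membership of $y$ in $X$ is determined by $X\cap\{x,x+e_i:i\neq k\}$, which lies in layer $y_k-1$.

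For \ref{item:cyc_layer:1}, we argue by induction on $j\ge0$ that $X\cap\{y_k=j\}=\varnothing$. The base case is the hypothesis. For the step, take $y$ with $y_k=j+1$ and the plaquette $(y-e_k)+T_d$ described above. All its points other than $y$ lie in layer $j$ and are not in $X$, so the even parity forces $y\notin X$.

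For \ref{item:cyc_layer:2}, we first show $X\cap S_j=\varnothing$ for all $j\ge1$. It is cleanest to use the same induction starting from layer $1$, which is empty by hypothesis. To get $y\notin X$ for $y_k=j+1$ with $j\ge1$, we use that the plaquette $(y-e_k)+T_d$ has all other points in layer $j\ge1$, which is empty by induction. Hence $X\subseteq S_0$. The cycle property on $S_0$ comes from plaquettes $x+T_d\subseteq\Lambda_{d,n}$ with $x_k=0$. For such $x$, the point $x+e_k$ lies in $S_1$ and so is not in $X$, so $X\cap(x+T')=X\cap(x+T_d)$ has even size. Any translate $x+T'\subseteq S_0$ has $x\in S_0$ with coordinate sum at most $n-1$, as $x+e_i\in S_0$ for some $i\neq k$ (this uses $d\ge2$). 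Thus $x+e_k\in\Lambda_{d,n}$ and $x+T_d\subseteq\Lambda_{d,n}$, so the previous sentence applies and $X\in\mathcal C_{(T',S_0)}$.

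No step is a serious obstacle. The only care needed is verifying that the relevant plaquettes lie inside the simplex, which is the coordinate-sum check above. One also needs the degenerate case $d=1$, where $T'=\{0\}$ and the translate condition must be read directly: a singleton $x+T'\subseteq S_0$ forces $x=0$ with $n\ge1$, and if $n=0$ the statement is trivial.
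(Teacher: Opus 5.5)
Your argument is correct and is essentially the paper's. Your layer-by-layer induction is the paper's choice of a point of $X$ with minimal $k$-th coordinate, and restarting it at layer~$1$ is the paper's application of part~1 to $S_+=e_k+\Lambda_{d,n-1}$. You also justify, using $d\ge 2$, that the completing point $x+e_k$ lies in $\Lambda_{d,n}$, which the paper only asserts.

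One correction: for $d=1$, $n=0$ the second assertion of part~2 is not trivial but false. Here $X=\{0\}$ is vacuously a cycle with $X\cap S_1=\varnothing$, while $0+T'=\{0\}\subseteq S_0$ meets $X$ once. The paper's proof skips this case too, since there is no point of $S_1$ to complete $T'$, and the case is never used later.
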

	
\begin{proof}
\ref{item:cyc_layer:1}. Suppose that $|X|>0$.
    Choose a point $x\in X$ with minimal $k$-th coordinate.
    Since $x\notin S_0$, the plaquette $x-e_k+T_d$ is entirely contained in $\Lambda_{d,n}$, and it contains exactly one point of $X$, namely $x$.
    This contradicts the cycle condition.
			
\ref{item:cyc_layer:2}. Let $S_+\coloneqq\Lambda_{d,n}\setminus S_0$, that is, the set of all points whose $k$-th coordinate is positive.
    Clearly, $X\cap S_+$ is a $(T_d,S_+)$-cycle.
    By part~1, already proved above, we have $X\cap S_+=\emptyset$, hence $X\subseteq S_0$.
			
    For any translate $T'+x$ contained entirely in $S_0$, there is a point of $S_1$ which completes it to a translate of $T_d$.
    Since this point does not belong to $X$, it follows that $|(T'+x)\cap X|$ is even.
    Therefore $X\in \mathcal{C}_{(T',S_0)}$.
\end{proof}
	
We now show that every cycle is either empty or has large cardinality.
	
\begin{lemma}[{Cycles are macroscopic}]\label{le:cyc_size}
  Let $X\in \mathcal{C}_{d,n}$ and assume $X\neq \emptyset$.
  Then $|X|\geqslant n+1$.
\end{lemma}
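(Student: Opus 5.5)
Let $X\in\mathcal C_{d,n}$ be nonempty. The plan is to show that $X$ meets every layer $L_j=\{x\in\Lambda_{d,n}\mid x_1+\dots+x_d=j\}$ for $j=0,\dots,n$. These $n+1$ layers are disjoint, so this gives $|X|\ge n+1$.

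The key tool is a \emph{descending} version of Lemma~\ref{le:cyc_layer}\ref{item:cyc_layer:1}, using the total degree $x_1+\dots+x_d$ in place of the $k$-th coordinate. Suppose $X\cap L_j=\varnothing$ for some $j<n$, and that $X$ has points of degree greater than $j$. Take $x\in X$ of minimal degree among those exceeding $j$; then $x\in L_{j'}$ for some $j'>j$, and we may choose $k$ with $x_k\ge1$. Consider the plaquette $x-e_k+T_d=\{x-e_k,\,x,\,x-e_k+e_i\ (i\ne k)\}$. Its points $x-e_k+e_i$ have the same degree $j'$ as $x$, while $x-e_k$ has degree $j'-1$. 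If $j'-1=j$, then $x-e_k\notin X$ because $L_j$ is empty. If $j'-1>j$, then $x-e_k\notin X$ by minimality. The plaquette lies in $\Lambda_{d,n}$ because all coordinates are nonnegative and the degrees are at most $j'\le n$.

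This alone does not give a contradiction, since the remaining points $x-e_k+e_i$, for $i\ne k$, are in $L_{j'}$ and may belong to $X$. I would refine the choice of $x$ as follows. Among points of $X\cap L_{j'}$, pick $x$ lexicographically extremal. Concretely, choose $k$ maximal with $x_k\ge1$, and among such points take one with $x_k$ maximal, tie-breaking suitably. The aim is that the points $x-e_k+e_i$ are excluded, or at least appear an even number of times. This is the step I expect to be the main obstacle. In the bare form the argument fails, since cycles can have large layers.

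A cleaner alternative, which I would try first, is induction on $n$ using Lemma~\ref{le:cyc_layer}, with the bottom face $S_0=\{x_d=0\}$ and layer $S_1$.
\begin{itemize}
\item Shifting $X\cap\{x_d\ge1\}$ by $-e_d$ gives a $(T_d,\Lambda_{d,n-1})$-cycle, because plaquettes inside $\{x_d\ge1\}\cap\Lambda_{d,n}$ correspond exactly to plaquettes of $\Lambda_{d,n-1}$.
\item If this shifted cycle is nonempty, induction gives at least $n$ points with $x_d\ge1$. By Lemma~\ref{le:cyc_layer}\ref{item:cyc_layer:1} with $k=d$, $X$ also meets $S_0$, adding at least one more point. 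The total is then at least $n+1$.
\item If it is empty, then $X\subseteq S_0$, and by Lemma~\ref{le:cyc_layer}\ref{item:cyc_layer:2} $X$ is a $(T_{d-1},\Lambda_{d-1,n})$-cycle. Induction on $d$ then gives $|X|\ge n+1$.
\end{itemize}
The base cases are as follows. For $n=0$ the bound $|X|\ge1$ is trivial. For $d=1$, a nonempty cycle in $\{0,\dots,n\}$ with plaquettes $\{x,x+1\}$ must be the whole interval, which has $n+1$ points.

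This double induction on $(d,n)$ avoids the obstacle entirely. I would verify carefully the claim about correspondence of plaquettes under the shift; the domain $\{x\in\Lambda_{d,n}\mid x_d\ge1\}-e_d$ is exactly $\Lambda_{d,n-1}$.
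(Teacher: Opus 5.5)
Your double induction on $(d,n)$ is correct and is essentially the paper's own proof: you split $\Lambda_{d,n}$ into the face $\{x_d=0\}$ and its complement (a translate of $\Lambda_{d,n-1}$), use Lemma~\ref{le:cyc_layer} in both cases, and induct on $n$ or on $d$ accordingly. Drop the diagonal-layer plan entirely rather than trying to repair it, because its target claim is false: $\{(1,0),(0,1),(1,1)\}$ is a cycle in $\Lambda_{2,2}$ that misses $L_0$.
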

	
\begin{proof}
  We prove this by double induction: first on $n$, and then on $d$.
  The base case is $n=0$ and arbitrary $d$.
		
  Let $X\in \mathcal{C}_{d,n}$.
  Decompose $\Lambda_{d,n}$ into two parts:
  \[
    \Lambda_{d,n}=S_0\sqcup S_+,
  \]
  where $S_0$ consists of the points whose $d$-th coordinate is equal to $0$, and $S_+$ consists of all remaining points.
  
  Observe that $S_+$ is a translate of $\Lambda_{d,n-1}$, while $S_0$ is an embedding of $\Lambda_{d-1,n}$ into $\Z^d$.
  We assume that the induction hypothesis applies to these sets.
		
  \medskip
  \noindent
  \emph{Case 1.} Suppose that $X\cap S_0=\emptyset$.
  This is impossible by part~1 of Lemma~\ref{le:cyc_layer}.
		
  \medskip
  \noindent
  \emph{Case 2.} Suppose that $X\cap S_+=\emptyset$.
  Then by part~2 of Lemma~\ref{le:cyc_layer}, the set $X$ is a $(T',S_0)$-cycle.
  Applying the induction hypothesis for $d-1$ and $n$, we obtain $|X|\geqslant n+1$.
		
  \medskip
  \noindent
  \emph{Case 3.} Suppose that both $X\cap S_0$ and $X\cap S_+$ are nonempty.
  Then $X\cap S_+$ is a $(T_d,S_+)$-cycle, and by the induction hypothesis we have
  \[
    |X\cap S_+|\geqslant n.
  \]
  Hence $|X|\geqslant n+1$.
\end{proof}
	
\subsubsection{A recursive inequality for the generating polynomials of cycles}
		
For $d=1$, the generating polynomial of cycles can be computed explicitly: the set $\Lambda_{1,n}$ is an interval of length $n$.
In this case,
\[
  \mathcal{C}_{1,n}=\{\Lambda_{1,n},\emptyset\},
\]
and therefore
\[
  G_{1,n}(t)=1+t^{n+1}.
\]
	
\begin{prop}[{Recursive relation}]\label{pr:cycle_rec}
  Let $d>1$.
  Assume that for some $t\in (0,1)$ and $n\in \N$ one has
  \[
    G_{d-1,n}\left(t^{1/4}\right)\leqslant 1+\frac{1}{4d}.\]
  Then
  \[
    G_{d,n}(t)\leqslant \left(G_{d,\left\lfloor \frac{n-\epsilon}{2} \right\rfloor}\left(t^{1.5}\right)\right)^{2^{d-1}}
  \]
  for some $0\leqslant \epsilon\leqslant d$.
\end{prop}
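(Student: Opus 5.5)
The plan is to slice a cycle along a coordinate direction by parity, recognise the odd slices as rescaled cycles of half the size, and control what the remaining part of the cycle can be. The basic algebraic input is the Frobenius identity $(1+x_1+\dots+x_d)^2=1+x_1^2+\dots+x_d^2$, read at the level of finite domains.

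\textbf{Step 1 (parity slices are rescaled cycles).} For $X\in\cC_{d,n}$ and a large plaquette $x+2T_d\subseteq\Lambda_{d,n}$, the indicator of $x+2T_d$ is the mod-$2$ sum of the $d+1$ ordinary plaquettes $x+y+T_d$, $y\in T_d$. All of these lie in the simplex $x+\Lambda_{d,2}$, which is the convex hull of $x+2T_d$ and hence lies in $\Lambda_{d,n}$. Therefore $|X\cap(x+2T_d)|$ is even. Since $x+2T_d$ stays in the parity class $r=x\bmod 2$, the set
\[
\{y : r+2y\in X\}
\]
is a cycle in the simplex $\{y : r+2y\in\Lambda_{d,n}\}$. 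This simplex is a translate of $\Lambda_{d,\lfloor (n-\epsilon)/2\rfloor}$ with $\epsilon=|r|\le d$, and this is presumably the content of the forthcoming Lemma~\ref{le:rec}. Fix a direction $k$ and keep the $2^{d-1}$ classes $r$ with $r_k=1$, i.e.\ the odd layers $x_k\equiv1$. The restriction $Y=X\cap\{x_k\text{ odd}\}$ is then an arbitrary tuple of $2^{d-1}$ independent rescaled cycles, so
\[
\sum_Y t^{1.5|Y|}\le\prod_r G_{d,\lfloor(n-|r|)/2\rfloor}(t^{1.5})\le \bigl(G_{d,\lfloor(n-\epsilon)/2\rfloor}(t^{1.5})\bigr)^{2^{d-1}},
\]
using monotonicity of $G_{d,m}$ in $m$ to choose a single $\epsilon$. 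Monotonicity in $m$ should follow because the restriction of a cycle to a subsimplex is again a cycle; if it fails I would keep the product form instead.

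\textbf{Step 2 (fibres over a fixed odd part).} Two cycles $X,X'$ with the same odd part $Y$ differ by a cycle $D$ vanishing on all odd $k$-layers, in particular on $S_1=\{x_k=1\}$. By Lemma~\ref{le:cyc_layer}\ref{item:cyc_layer:2}, $D\subseteq S_0$, and $D$ is a $(T_d\setminus\{e_k\},S_0)$-cycle, i.e.\ an element of $\cC_{d-1,n}$. So each fibre is a coset $X_0\symdiff\cC_{d-1,n}$ supported on the facet $S_0$, and its weight is
\[
\sum_{D\in\cC_{d-1,n}} t^{|X_0\symdiff D|}.
\]
Following the hint in the outline, Lemma~\ref{le:maj} should give that, for a good choice of $k$ (and of which parity plays the role of ``odd''), this fibre weight is at most $t^{1.5|Y|}$.

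\textbf{Step 3 (the main obstacle: proving the fibre bound).} I would first try the following argument.
\begin{enumerate}
\item Write $|X_0\symdiff D|=|Y|+|E|$, where $E$ is the even part (layers $x_k\equiv0$) of $X_0\symdiff D$. It then suffices to bound $\sum_D t^{|E|}\le t^{|Y|/2}$.
\item Pick the slicing direction and parity so that the kept part is nonempty and at most half of the cycle. This is the choice illustrated in Figure~\ref{fig:cycles}, and it makes the even part at least as large as $Y$.
\item The freedom in the fibre only affects the facet $S_0$. Bound the number of facet modifications, weighted by the change in $|E|$, via
\[
\sum_{D} t^{(|D|-\text{overlap})}\le G_{d-1,n}(t^{1/4})\le 1+\tfrac1{4d}.
\]
The exponent $1/4$ should absorb the loss from $D$ cancelling points of $X_0$: at most half of $|D|$ can cancel without the representative having been badly chosen, so choose $X_0$ to minimise $|E|$ in its coset.
\end{enumerate}
The factor $1+\tfrac1{4d}$ is then meant to be swallowed by the slack $t^{|Y|/2}$ versus $t^{|E|}$, or by the union over the at most $2d$ choices of (direction, parity). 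Doing that union without introducing a multiplicative constant into the final recursion is the delicate point. I would aim to assign each nonempty cycle canonically to one choice, e.g.\ the lexicographically first admissible one, so that the total sum is still dominated by a single product $\sum_Y t^{1.5|Y|}$ as in Step 1, with the empty cycle contributing the leading $1$.

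Combining Steps 1 and 2 with the fibre bound gives $G_{d,n}(t)\le\sum_Y t^{1.5|Y|}$, which is the claimed inequality.
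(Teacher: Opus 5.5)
Your Steps~1 and~2 match the paper's Lemma~\ref{le:rec} and its use of Lemma~\ref{le:cyc_layer}. However, Step~3 has a genuine gap, and it is exactly the point where the hypothesis $G_{d-1,n}(t^{1/4})\le 1+\frac1{4d}$ is needed.

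The slicing pair $(k,\varepsilon)$ has to depend on the cycle. For $\varepsilon=1$, nonempty cycles inside the facet $\{x_k=0\}$ have $Y=\varnothing$. For $\varepsilon=0$, the cycle $\{x: x_j=0 \text{ for } j\neq i\}$ with $i\neq k$ has $Y=X$. Assigning each nonempty cycle a canonical pair avoids counting a cycle twice, but you are still left with $2d$ families. Write $P_{k,\varepsilon}=\sum_Y t^{1.5|Y|}$ over partial cycles of type $(k,\varepsilon)$. A fibre bound of $t^{1.5|Y|}$ then only yields
\[
G_{d,n}(t)\le 1+\sum_{k,\varepsilon}\left(P_{k,\varepsilon}-1\right),
\]
and this exceeds $\max_{k,\varepsilon}P_{k,\varepsilon}$ as soon as two of the $P_{k,\varepsilon}$ exceed $1$. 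So your concluding inequality $G_{d,n}(t)\le\sum_Y t^{1.5|Y|}$ for a single family does not follow.

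What is missing is an extra factor $\frac1{2d}$ in the fibre bound (Lemma~\ref{le:maj}): over the cycles of type $(k,\varepsilon,Y)$ one has $\sum t^{|X|}\le t^{1.5|Y|}/(2d)$. Hence
\[
G_{d,n}(t)\le 1+\frac1{2d}\sum_{k,\varepsilon}\sum_{Y\neq\varnothing}t^{1.5|Y|}=\frac1{2d}\sum_{k,\varepsilon}P_{k,\varepsilon},
\]
which is an average of the $2d$ products and therefore at most the largest one. The $\frac1{2d}$ is not obtained from "slack"; it is the sum of two separate contributions of $\frac1{4d}$.
\begin{itemize}
\item The minimal cycle $X_1$ of the fibre satisfies $t^{|X_1|}\le t^{1.5|Y|}\,t^{|X_1|/4}\le t^{1.5|Y|}\,t^{(n+1)/4}\le t^{1.5|Y|}/(4d)$. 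This uses $|Y|\le|X_1|/2$, Lemma~\ref{le:cyc_size}, and $t^{(n+1)/4}\le\frac1{4d}$. The last inequality follows from the hypothesis because $\Lambda_{d-1,n}$ contains a cycle of size $n+1$.
\item The remaining cycles contribute at most $t^{1.5|Y|}\left(G_{d-1,n}(t^{1/4})-1\right)\le t^{1.5|Y|}/(4d)$.
\end{itemize}

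There are also three smaller corrections.
\begin{itemize}
\item The inequality "even part at least $|Y|$" holds only for cycles whose own chosen pair is $(k,\varepsilon)$. So the fibre sum must run over the cycles of type $(k,\varepsilon,Y)$, i.e.\ those with $|X|\ge 2|Y|$, not over the whole coset $X_0\symdiff\cC_{d-1,n}$, and $X_1$ should minimise $|X|$ among them. Write $Z_i=X_i\setminus Y$ and $L_i=X_i\symdiff X_1$. The bounds $|Z_i|\ge|Z_1|$ and $|Z_i|\ge|L_i|-|Z_1|$ give $|Z_i|\ge|L_i|/2$. Combined with $|Z_i|\ge|Y|$, this gives $|Z_i|-|Y|/2\ge|L_i|/4$, which is the rigorous form of your exponent-$\frac14$ heuristic.
\item When the kept layer is the even one, your Step~2 does not apply. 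Instead, by part~1 of Lemma~\ref{le:cyc_layer}, the fibre has at most one element, so only the estimate for $X_1$ is needed.
\item You do not need monotonicity of $G_{d,m}$ in $m$, which need not hold: compare $G_{1,m}(t)=1+t^{m+1}$ with $G_{d,m}(1)=|\cC_{d,m}|$. All factors are at least $1$, so the product is at most its largest factor raised to the power $2^{d-1}$. That is where $\epsilon=r_1+\dots+r_d$ comes from.
\end{itemize}
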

	
\begin{proof}
  First, let us derive a simple consequence of the condition imposed on $t$ and $n$.
		
  The simplex $\Lambda_{d-1,n}$ contains the empty cycle.
  It also contains a cycle of size $n+1$, namely the set of all points whose coordinates other than the first are equal to $0$.
  Hence
  \[
    G_{d-1,n}\left(t^{1/4}\right)\geqslant 1+t^{(n+1)/4},
  \]
  and therefore
  \begin{equation}
    t^{(n+1)/4}\leqslant \frac{1}{4d}.
    \label{ineq:t}
  \end{equation}
  
  Now we examine the geometry of the simplex $\Lambda_{d,n}$ more closely.
  Partition all points of $\Lambda_{d,n}$ into $2^d$ subsets according to the parities of their coordinates (see Figure~\ref{fig:cycles}).
  For $r=(r_1,\dots,r_d)\in \{0,1\}^d$, define
  \begin{equation}
  \label{eq:def:Lambda:dnr}
    \Lambda_{d,n}^r\coloneqq\left\{(x_1,\dots,x_d)\in \Lambda_{d,n}\mid x_i\equiv r_i \pmod 2 \text{ for all } 1\leqslant i\leqslant d\right\}.
  \end{equation}
		
  Then, clearly,
  \[
    \Lambda_{d,n}=\bigsqcup_{r\in \{0,1\}^d}\Lambda_{d,n}^r.
  \]
		
  For each $r\in \{0,1\}^d$, the set $\Lambda_{d,n}^r$ looks like a simplex scaled by a factor of $2$, with side length approximately $n/2$.
  More precisely,
  \begin{equation}
    \Lambda_{d,n}^r=r+2\Lambda_{d,n_r},
    \label{eq:rec}
  \end{equation}
  where
  \[
    n_r\coloneqq\left\lfloor \frac{n-(r_1+\dots+r_d)}{2}\right\rfloor.
  \]
  
  Formula~\eqref{eq:rec} defines a bijection
  \[
    \tau_r\colon\Lambda_{d,n}^r\to\Lambda_{d,n_r}.
  \]
  
  If $X\subseteq \Lambda_{d,n}$, define
  \[
    \tau_r(X)\coloneqq\tau_r\left(X\cap\Lambda_{d,n}^r\right).
  \]
		
  Thus, we may view a subset $X\subseteq \Lambda_{d,n}$ as a collection of $2^d$ subsets of the form $\tau_r(X)$.
		
  \begin{lemma}[{Restrictions of cycles are renormalised cycles}]
    \label{le:rec}
    If $X\in \mathcal{C}_{d,n}$, then $\tau_r(X)\in \mathcal{C}_{d,n_r}$ for all $r\in \{0,1\}^d$.
  \end{lemma}
		
  \begin{proof}
    The statement follows immediately from the identity
    \[
      T_d \symdiff (e_1+T_d)\symdiff \dots \symdiff (e_d+T_d)=2T_d.
    \]
			
    This identity is easy to verify directly: all points of the form $e_i+e_j$, where $i\neq j$, cancel out.
    It can also be interpreted as the Frobenius identity
    \[
      (1+x_1+\dots+x_d)^2=1+x_1^2+\dots+x_d^2
    \]
    over a field of characteristic $2$.
    (This observation is not new; it is used, for instance, to construct a renormalisation for the Ledrappier subshift.)
  \end{proof}
		
  Consider $1\leqslant k\leqslant d$ and $\varepsilon\in \{0,1\}$, and define
  \begin{equation}
  \label{eq:def:Lambda:dnke}
    \Lambda_{d,n}^{k,\varepsilon}\coloneqq\left\{(x_1,\dots,x_d)\in \Lambda_{d,n}\mid x_k\equiv \varepsilon \pmod 2\right\}
  \end{equation}
  (see Figure~\ref{fig:cycles}). We call subsets of $\Lambda_{d,n}$ of this form \emph{binary layers}. There are $2d$ binary layers in total.
  Binary layers with the same $k$ and different values of $\varepsilon$ will be called \emph{complementary}; the one with $\varepsilon=0$ will be called \emph{large}, and the one with $\varepsilon=1$ \emph{small}.
  		
  If in addition $X\in \mathcal{C}_{d,n}$, then we call $X\cap\Lambda_{d,n}^{k,\varepsilon}$ a \emph{partial cycle of type $(k,\varepsilon)$}.
  The set of all such partial cycles is denoted by $\mathcal{C}_{d,n}^{k,\varepsilon}$.
		
  \begin{lemma}[{Cycles with fixed even or odd sections}]\label{le:maj}
    Let $1\leqslant k\leqslant d$, $\varepsilon\in \{0,1\}$, and suppose that $t$ and $n$ satisfy the assumptions of Proposition~\ref{pr:cycle_rec}.
    Let $Y$ be a nonempty partial cycle of type $(k,\varepsilon)$, and let $X_1,\dots,X_m$ be distinct cycles from $\mathcal{C}_{d,n}$ such that
    \[
      X_i\cap\Lambda_{d,n}^{k,\varepsilon}=Y
      \quad\text{and}\quad
      |X_i|\geqslant 2|Y|
    \]
    for every $i$.
    Then
    \[
      \sum_{i=1}^{m} t^{|X_i|}\leqslant \frac{t^{1.5|Y|}}{2d}.
    \]
  \end{lemma}		
  \begin{proof}
    Let $S_0$ be the set of points in $\Lambda_{d,n}$ whose $k$-th coordinate is equal to $0$.
    We distinguish two cases depending on the value of $\varepsilon$.
			
    \medskip
    \noindent
    \emph{Case 1: $\varepsilon=0$.}
    We claim that in this case $m\leqslant 1$.
    Indeed, suppose that $X_1\cap\Lambda_{d,n}^{k,0}=X_2\cap\Lambda_{d,n}^{k,0}$.
    Consider the symmetric difference
    \[
      X'\coloneqq X_1\symdiff X_2.
    \]
    Then $X'\cap S_0=\emptyset$, hence $X'=\emptyset$ by part~1 of Lemma~\ref{le:cyc_layer}.
    Therefore $X_1=X_2$, so it remains to check that
    \[
      t^{|X_1|}\leqslant \frac{t^{1.5|Y|}}{2d}.
    \]
    We know that $|X_1|\geqslant n+1$ by Lemma~\ref{le:cyc_size}, and that $|Y|\leqslant |X_1|/2$, hence, by \eqref{ineq:t},
    \[
      t^{|X_1|-1.5|Y|}
      \leqslant
      t^{|X_1|/4}
      \leqslant
      t^{(n+1)/4}
      \leqslant
      \frac{1}{4d}.
    \]
			
    \medskip
    \noindent
    \emph{Case 2: $\varepsilon=1$.}
    Without loss of generality, we may assume that $|X_i|\geqslant |X_1|$ for all $1<i\leqslant m$.
    As in the previous case,
    \[
      t^{|X_1|-1.5|Y|}\leqslant t^{(n+1)/4},
    \]
    so we only need to estimate the sum of the remaining terms.
    
    \noindent Define
    \[
      L_i\coloneqq X_i\symdiff X_1 \qquad (1\leqslant i\leqslant m),
    \]
    so in particular $L_1=\emptyset$.
    Clearly, $L_i\cap \Lambda_{d,n}^{k,1}=\emptyset$ for every $i$, so part~2 of Lemma~\ref{le:cyc_layer} implies that
    \[
      L_i\subseteq S_0
      \quad\text{and}\quad
      L_i\in \mathcal{C}_{(T',S_0)},
    \]
    where $T'=T_d\setminus\{e_k\}$.
    Thus each $L_i$ can be viewed simply as a cycle in a simplex of smaller dimension.
    Therefore, for every $\lambda\in (0,1)$,
    \begin{equation}
      \sum_{i=2}^m \lambda^{|L_i|}
      \leqslant
      \sum_{\substack{X\in \mathcal{C}_{d-1,n}\\ X\neq \emptyset}} \lambda^{|X|}
      =
      G_{d-1,n}(\lambda)-1.
      \label{ineq:Li}
    \end{equation}
    
    \noindent For $1\leqslant i\leqslant m$, let
    \[
      Z_i\coloneqq X_i\cap\Lambda_{d,n}^{k,0},
    \]
    so $Z_i=X_i\setminus Y$. For every $i$ we have $Z_i=Z_1\symdiff L_i$,
    and we also have
    \[
      |Z_i|\geqslant |Z_1| \quad\text{and}\quad |Z_i|\geqslant |Y|.
    \]
			
    We claim that
    \[
      |Z_i|\geqslant |L_i|/2	\qquad\text{for all } i.
    \]
    Indeed, if $|L_i|\geqslant 2|Z_1|$, then $|Z_i|\geqslant |L_i|-|Z_1|\geqslant |L_i|/2$, and if $|L_i|\leqslant 2|Z_1|$ then $|Z_i|\geqslant |Z_1|\geqslant |L_i|/2$.
    Combining this with $|Z_i|\geqslant |Y|$, we have
    \[
      |Z_i|-|Y|/2 \geqslant |Z_i|/2 \geqslant |L_i|/4.
    \]
    
    Using these inequalities, we obtain
    \[
      \sum_{i=2}^{m} t^{|X_i|-1.5|Y|}
      =
      \sum_{i=2}^{m} t^{|Z_i|-|Y|/2}
      \leqslant
      \sum_{i=2}^{m} t^{|L_i|/4}
      \leqslant
      G_{d-1,n}\left(t^{1/4}\right)-1.
    \]
    
    Therefore, by \eqref{ineq:t},
    \[
      \sum_{i=1}^{m} t^{|X_i|-1.5|Y|}
      \leqslant
      t^{(n+1)/4}+G_{d-1,n}\left(t^{1/4}\right)-1
      \leqslant
      \frac{1}{2d}.\qedhere
    \]
  \end{proof}
		
  If $1\leqslant k\leqslant d$ and $\varepsilon\in \{0,1\}$, let $R_{k,\varepsilon}$ denote the set of all binary strings of length $d$ whose $k$-th coordinate is equal to $\varepsilon$.
		
  We will prove the following inequality:
  \begin{equation}
    G_{d,n}(t)\leqslant \frac{1}{2d}\sum_{\substack{1\leqslant k\leqslant d\\ \varepsilon\in \{0,1\}}}
    \prod_{r\in R_{k,\varepsilon}} G_{d,n_r}\left(t^{1.5}\right).
    \label{eq}
  \end{equation}
  
  Observe that this inequality immediately implies Proposition~\ref{pr:cycle_rec}.
  
  \begin{lemma}[{Slicing direction}]\label{le:nonempty}
    Let $X\in \mathcal{C}_{d,n}$ and assume that $|X|>0$.
    Then there exists $k$ such that
    \[
      X\cap\Lambda_{d,n}^{k,0}\neq\emptyset\text{ and }X\cap\Lambda_{d,n}^{k,1}\neq\emptyset.
    \]
  \end{lemma}
  
  \begin{proof}
    Let $x\in X$ and $y\in \Lambda_{d,n-1}$ be such that $x\in y+T_d\subseteq\Lambda_{d,n}$. Since $X\in\cC_{d,n}$, $|(y+T_d)\cap X|\ge 2$, so there exists $z\in(y+T_d)\setminus\{x\}$. Then any $k$ such that $\langle x-z,e_k\rangle\neq 0$ is as desired.
  \end{proof}
		
  Let $X\in \mathcal{C}_{d,n}$ be a nonzero cycle.
  Choose, according to Lemma~\ref{le:nonempty}, values of $k$ and $\varepsilon$ such that
  \[
    0<\left|X\cap\Lambda_{d,n}^{k,\varepsilon}\right|\leqslant |X|/2.
  \]
  Let
  \[
    Y=X\cap\Lambda_{d,n}^{k,\varepsilon}.
  \]
  We say that the cycle $X$ is \emph{of type} $(k,\varepsilon,Y)$.
  
  In this way, we define a type for every nonzero cycle.
  
  We now group the nonconstant terms on the left-hand side of \eqref{eq} by type, and estimate the sum within each type using Lemma~\ref{le:maj}:
  \[
    G_{d,n}(t)
    =
    \sum_{X\in \mathcal{C}_{d,n}} t^{|X|}
    =
    1+\sum_{X \text{ is of type }(k,\varepsilon,Y)} t^{|X|}
    \leqslant
    1+\frac{1}{2d}\sum_{\substack{1\leqslant k\leqslant d\\ \varepsilon\in \{0,1\}}}
    \sum_{\substack{Y\in \mathcal{C}_{d,n}^{k,\varepsilon}\\ Y\neq \emptyset}} t^{1.5|Y|}.
  \]
  Equivalently,
  \[
    G_{d,n}(t)
    \leqslant
    \frac{1}{2d}\sum_{\substack{1\leqslant k\leqslant d\\ \varepsilon\in \{0,1\}}}
    \sum_{Y\in \mathcal{C}_{d,n}^{k,\varepsilon}} t^{1.5|Y|}.
  \]
  
  Now fix $k$ and $\varepsilon$.
  Consider a partial cycle $Y\in \mathcal{C}_{d,n}^{k,\varepsilon}$.
  The support of $Y$ lies inside
  \[
    \Lambda_{d,n}^{k,\varepsilon}=\bigsqcup_{r\in R_{k,\varepsilon}} \Lambda_{d,n}^r,
  \]
  and
  \[
    t^{1.5|Y|}
    =
    \prod_{r\in R_{k,\varepsilon}} \left(t^{1.5}\right)^{|\tau_r(Y)|}.
  \]
  
  By Lemma~\ref{le:rec}, all sets $\tau_r(Y)$ are cycles in the corresponding simplices.
  Therefore the monomial $t^{1.5|Y|}$ appears in the expansion of
  \[
    \prod_{r\in R_{k,\varepsilon}} G_{d,n_r}\left(t^{1.5}\right)
    =
    \prod_{r\in R_{k,\varepsilon}} \sum_{Z\in \mathcal{C}_{d,n_r}} \left(t^{1.5}\right)^{|Z|},
  \]
  namely by choosing in each factor the term corresponding to $\tau_r(Y)$.
  
  Different $Y$ correspond to different such choices.
  Therefore, for positive $t$, we obtain
  \[
    \sum_{Y\in \mathcal{C}_{d,n}^{k,\varepsilon}} t^{1.5|Y|}
    \leqslant
    \prod_{r\in R_{k,\varepsilon}} G_{d,n_r}\left(t^{1.5}\right),
  \]
  which completes the proof of \eqref{eq}, and hence also of Proposition~\ref{pr:cycle_rec}.
\end{proof}
	
\subsubsection{Polynomial estimates}
\label{subsubsec:poly:estimates}
\begin{lemma}[{Crude bound}]
\label{le:G_bounds}
Let $d,n\in \N$.
Then the function $G_{d,n}(t)$ is increasing on the interval $[0,1]$, and on this interval it satisfies
\[
G_{d,n}(t) \leqslant 1 + 2^{(n+1)^{d-1}} t^{n+1}.
\]
\end{lemma}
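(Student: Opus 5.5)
Monotonicity is immediate: $G_{d,n}(t)=\sum_{X\in\mathcal C_{d,n}}t^{|X|}$ is a polynomial with nonnegative coefficients, so it is nondecreasing on $[0,1]$. It is in fact strictly increasing unless $\mathcal C_{d,n}=\{\varnothing\}$, which is impossible for $d\ge 2$ because the face $\{x_2=\dots=x_d=0\}$ is a cycle; when $d=1$, $G_{1,n}=1+t^{n+1}$ is explicit.

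For the upper bound, we will write $G_{d,n}(t)=1+\sum_{X\neq\varnothing}t^{|X|}$ and control the nonempty terms in two ways. By Lemma~\ref{le:cyc_size}, every nonempty cycle has $|X|\ge n+1$. Since $t\in[0,1]$, this gives $t^{|X|}\le t^{n+1}$ for each such term. It therefore remains to show that the number of nonempty cycles is at most $2^{(n+1)^{d-1}}$, that is, $|\mathcal C_{d,n}|\le 2^{(n+1)^{d-1}}$ (a bound on all of $\mathcal C_{d,n}$ suffices).

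The counting step relies on part~\ref{item:cyc_layer:1} of Lemma~\ref{le:cyc_layer}, applied with $k=1$. If two cycles $X,X'$ agree on the face $S_0=\{x\in\Lambda_{d,n}: x_1=0\}$, then $X\symdiff X'$ is a cycle, because $\mathcal C_{d,n}$ is a vector space, and it misses $S_0$. By the lemma, $X\symdiff X'=\varnothing$, so $X=X'$. Hence the restriction map $X\mapsto X\cap S_0$ is injective. This gives
\[|\mathcal C_{d,n}|\le 2^{|S_0|}=2^{\binom{n+d-1}{d-1}}.\]

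The last step, which is where the only care is needed, is the elementary estimate $\binom{n+d-1}{d-1}\le (n+1)^{d-1}$. The left side counts the points of $\Lambda_{d-1,n}$, and these lie inside the box $\{0,\dots,n\}^{d-1}$, which has $(n+1)^{d-1}$ points. The case $d=1$ reads $1\le 1$. Combining the three steps yields
\[G_{d,n}(t)\le 1+2^{(n+1)^{d-1}}t^{n+1}.\]
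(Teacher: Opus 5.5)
Your proof is correct and follows the paper's argument step for step: monotonicity from the nonnegative coefficients, $|X|\ge n+1$ for nonempty cycles from Lemma~\ref{le:cyc_size}, injectivity of $X\mapsto X\cap S_0$ from part~1 of Lemma~\ref{le:cyc_layer}, and $|S_0|\le(n+1)^{d-1}$. You additionally write out why the restriction map is injective (via symmetric differences of cycles) and compute $|S_0|=\binom{n+d-1}{d-1}$ explicitly, both of which the paper leaves implicit.
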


\begin{proof}
Monotonicity is obvious. By Lemma~\ref{le:cyc_size}, every nontrivial cycle has size at least $n+1$, hence for $0\leqslant t\leqslant 1$,
\[
G_{d,n}(t) \leqslant 1 + |\mathcal{C}_{d,n}|\, t^{n+1}.
\]

It remains to estimate the total number of cycles.
By part 1 of Lemma~\ref{le:cyc_layer}, a cycle is uniquely determined by its intersection with $S_0$, and
\[
|S_0|\leqslant (n+1)^{d-1}.
\]
Therefore
\[
|\mathcal{C}_{d,n}| \leqslant 2^{(n+1)^{d-1}}.\qedhere
\]
\end{proof}

\noindent We are now ready to prove Theorem~\ref{th:upper} by induction.
\begin{proof}[Proof of Theorem~\ref{th:upper}]
\emph{Base case:} $d=1$. We know that
\[
G_{1,n}(t)=1+t^{n+1}.
\]
Take $a_1=6$. We must check that for $s\geqslant 6$ and $n\geqslant m^s$ one has
\[
\left(1-\frac{1}{m}\right)^{n+1} \leqslant n^{-s}.
\]
Using $(1-1/m)^m\leqslant e^{-1}$, it is enough to prove
\[
\frac{n+1}{m}\geqslant s\ln n.
\]
Since $m\leqslant n^{1/s}$, it is enough to show that for $n\geqslant m^s\geqslant 2^s$ we have
\[
n+1\geqslant s n^{1/s}\ln n;
\]
this holds for $n=2^s$ since $s\geqslant 6$, and also for larger $n$ by convexity.

\medskip

\emph{Induction step.}

Fix $a_{d-1}>d$ such that for all $m\geqslant 2$, $s\geqslant a_{d-1}$, and $n>m^s$, one has
\[
G_{d-1,n}\!\left(1-\frac{1}{m}\right) < 1+n^{-s}.
\]

Then, whenever $t<1-\frac{1}{m}$ and $n\geqslant m^{3a_{d-1}}$, we automatically have
\[
n \geqslant m^{3a_{d-1}} \geqslant (4m)^{a_{d-1}} \geqslant 8^d > 4d,
\]
and therefore
\[
G_{d-1,n}\left(\sqrt[4]{t}\right)
\leqslant
G_{d-1,n}\!\left(1-\frac{1}{4m}\right)
<
1+n^{-a_{d-1}}
<
1+\frac{1}{n}
<
1+\frac{1}{4d}.
\]

Hence the assumption of Proposition~\ref{pr:cycle_rec} is satisfied, so
\[
G_{d,n}(t)
\leqslant
\left(
G_{d,\left\lfloor \frac{n-\epsilon}{2}\right\rfloor}\left(t^{1.5}\right)
\right)^{2^{d-1}}
\]
for some $0\leqslant \epsilon \leqslant d$.
Since $n>4d$, we have
\[
\left\lfloor \frac{n-\epsilon}{2}\right\rfloor \geqslant \frac{n}{4}.
\]

Choose $a_d$ large enough so that the following conditions hold:
\begin{gather}
\label{eq:ad1}a_d > 6a_{d-1};\\
\label{eq:ad2}x^{a_d\ln(1.5)/4} \geqslant 3x \cdot \ln 2 \cdot x^{3(d-1)a_{d-1}}\text{ for all }x\geqslant 2;\\
\label{eq:ad3}a_d > \frac{12}{\ln 1.5};\\
\label{eq:ad4}1.5^{x/24} \geqslant x\text{ for all }x\geqslant a_d;\\
\label{eq:ad5}x^{\ln1.5}/12 > 3\ln x\text{ for all }x\geqslant 2^{a_d};\\
\label{eq:ad6}1.5^{\frac{2}{3}x} \geqslant 3\ln 2\cdot dx\text{ for all }x\geqslant \dfrac{a_d}{8}.
\end{gather}

These conditions will be used below. It is easy to check that each of them holds for all sufficiently large $a_d$.

Assume now that
\[
t<1-\frac{1}{m},\qquad s\geqslant a_d,\qquad n\geqslant m^s.
\]

We apply Proposition~\ref{pr:cycle_rec} repeatedly as long as possible.
At each step, the parameter $n$ decreases, and we may continue until $n$ becomes smaller than $m^{3a_{d-1}}$.
The parameter $t$ also decreases, but this does not interfere with the application of the lemma:
\begin{equation}
\label{eq:fdn:recurrence}
G_{d,n}(t)
\leqslant
\left(G_{d,n_1}\left(t^{1.5}\right)\right)^{2^{d-1}}
\leqslant
\left(G_{d,n_2}\left(t^{1.5^2}\right)\right)^{(2^{d-1})^2}
\leqslant
\dots
\leqslant
\left(G_{d,n_r}\left(t^{1.5^r}\right)\right)^{(2^{d-1})^r},
\end{equation}
where $(n_i)$ is a decreasing sequence of positive integers, with
\[
2 \leqslant \frac{n_i}{n_{i+1}} \leqslant 4,
\qquad\text{and}\qquad
n_r < m^{3a_{d-1}}.
\]

This gives a lower bound on $r$:
\[
r \geqslant \frac{\ln n - \ln m^{3a_{d-1}}}{\ln 4}.
\]

By \eqref{eq:ad1}, we have $\ln m^{3a_{d-1}} < 0.5\ln n$, so
\begin{equation}
r \geqslant \frac{\ln n}{4} \geqslant \frac{s\ln m}{4} \geqslant \frac{s}{8}.
\label{neq:r}
\end{equation}

By Lemma~\ref{le:G_bounds}, for any $0<\lambda<1$ we have
\[
G_{d,n_r}(\lambda) \leqslant 1 + 2^{(n_r+1)^{d-1}} \lambda^{n_r+1}\leqslant 1 + 2^{(m^{3a_{d-1}})^{d-1}}{\lambda},
\]
so \eqref{eq:fdn:recurrence} gives
\[
G_{d,n}(t)
\leqslant
\left(
1 + 2^{m^{3(d-1)a_{d-1}}} \, t^{1.5^r}
\right)^{(2^{d-1})^r}.
\]

We want to check that this is at most $1+n^{-s}$.

If $k$ is a positive integer and $0 \leqslant 2kx<1$, then
\[
(1+x)^k \leqslant 1+2kx,
\]
which is easily proved by induction.
So it is enough to verify
\[
2\cdot2^{m^{3(d-1)a_{d-1}}}\cdot t^{1.5^r}\cdot \left(2^{d-1}\right)^r < n^{-s}.
\]

Taking logarithms, this becomes
\[
\ln 2 \cdot m^{3(d-1)a_{d-1}}
+ (\ln t)\cdot 1.5^r
+ \ln 2 \cdot (1+r(d-1))
< -s\ln n.
\]

Since $t<1-\frac{1}{m}$, we have $\ln t < -\frac{1}{m}$. We also have $r\geqslant \frac{s}{8} \geqslant 1$. Thus, it is enough to prove
\begin{equation}
    \label{eq:poly}
\ln 2 \cdot m^{3(d-1)a_{d-1}}
+ s\ln n
+ \ln 2 \cdot dr
<
\frac{1.5^r}{m}.
\end{equation}

To do this, we show that each of the three terms on the left is at most $\frac{1.5^r}{3m}$.

\begin{enumerate}
    \item By \eqref{neq:r} and \eqref{eq:ad2},
    \begin{equation}
    \label{eq:poly:1}
    1.5^r \geqslant m^{s\ln1.5/4} \geqslant m^{a_d\ln1.5/4}
    \geqslant 3m\cdot \ln 2 \cdot m^{3(d-1)a_{d-1}}.
    \end{equation}

    \item 
    By \eqref{neq:r} and \eqref{eq:ad3},
        \begin{equation}
        \label{eq:2:a}
        \frac{r}{3} \geqslant \frac{s\ln m}{12}\ge \frac{a_d\ln m}{12}\ge \frac{\ln m}{\ln 1.5}.\end{equation}
    Again by \eqref{neq:r} and \eqref{eq:ad4}, we have
        \begin{equation}
        \label{eq:2:b}
        1.5^{r/3} \geqslant 1.5^{s/24} \geqslant s.
        \end{equation}
    By \eqref{neq:r} and \eqref{eq:ad5}, we also have
        \begin{equation}
        \label{eq:2:c}
        \frac{r}{3} \geqslant \frac{\ln n}{12}\ge\frac{\ln(3\ln n)}{\ln 1.5}.
        \end{equation}
    Putting \eqref{eq:2:a}, \eqref{eq:2:b} and \eqref{eq:2:c} together, we get
    \begin{equation}
    \label{eq:poly:2}
    3ms\ln n \leqslant 1.5^r.
    \end{equation}
    \item By \eqref{eq:2:a}, \eqref{neq:r} and \eqref{eq:ad6}, we have
    \begin{equation}
    \label{eq:poly:3}
    1.5^r \geqslant m\cdot1.5^{2r/3}\ge 3m\ln 2\cdot dr.
    \end{equation}
\end{enumerate}
The desired \eqref{eq:poly} follows directly from \eqref{eq:poly:1}, \eqref{eq:poly:2} and \eqref{eq:poly:3}.
\end{proof}	

\section{Probabilistic arguments}\label{ss:probabilistic}
\subsection{Lower bounds via the combinatorial bottleneck}
\label{subsec:lower:bottleneck}
In this section we prove the lower bounds of Theorems~\ref{th:main} and~\ref{th:2k:asymptotics}. Given Theorem~\ref{thm:lower}, the route we follow is relatively standard in the context of kinetically constrained models. The two proofs are very similar, but we start with the simpler one.

\begin{proof}[Proof of the lower bound of Theorem~\ref{th:2k:asymptotics}]
The lower bound holds for general even $d\ge2$. For $\varepsilon>0$ small enough (depending on $d$), consider $\beta>1/\varepsilon^2$ and an integer $k$ such that $k\in(1/\varepsilon,\beta\varepsilon)$ (if it exists). We consider the torus $\bbT_n$ with $n=2^k$. Set $\Omega'=\bbF_2^{\bbT_n}$, and define the mapping
\begin{equation}
\label{eq:def:Phi}
\Phi\colon\Omega_{\bbT_n}\to\Omega'\colon\sigma\mapsto\left(\frac{1-[\sigma]_{x+T_d}}2\right)_{x\in\bbT_n}.
\end{equation}
from the spin configuration to the lamp configuration. Then, recalling the definitions of $T_d$ from \eqref{eq:def:Td} and $H$ from \eqref{eq:def:H}, we have \[H_{\bbT_n}(\sigma)=-|\bbT_n|+2\sum_{x\in\bbT_n}\Phi(\sigma)_x,\]
while $\Phi(\sigma^x)=\Phi(\sigma)+\1_{x-T_d}$; here and below addition takes place in $\bbF_2^{\bbT_n}$, and for a set or element $S$ we denote by $\1_S$ the characteristic function of $S$. We then see that the image of the Glauber dynamics on $\bbT_n$ via $\Phi$ is the Markov process with generator
\begin{equation}
\label{eq:def:L:prime}\cL'f(\omega)=\sum_{x\in\bbT_n}\frac{e^{-2\beta\sum_{y\in x-T_d}(1-\omega_y)}}{e^{-2\beta\sum_{y\in x-T_d}\omega_y}+e^{-2\beta\sum_{y\in x-T_d}(1-\omega_y)}}\left(f(\omega+\1_{x-T_d})-f(\omega)\right).
\end{equation}
We denote by $\pi$ the product Bernoulli measure on $\Omega'$ with parameter 
\begin{equation}
\label{eq:def:p}
p=1/(1+e^{2\beta})\in\left[1/(2e^{2\beta}),e^{-2\beta}\right],
\end{equation}
which is clearly invariant for this dynamics.

Consider a central box $B=\{-L,\dots,L\}^d$ with $L=2^{k-2}$. Define the events 
\begin{align*}
\cE_0&{}=\left\{\omega\in\Omega'\mid\forall x\in B,\omega_x=0\right\},&
\cE_1&{}=\left\{\omega\in\Omega'\mid\omega_B=\1_{O}\right\}.
\end{align*}
Recalling Definition~\ref{def:chain} and Theorem~\ref{thm:lower}, we say that $\omega\in\Omega'$ is \emph{good} if there exists a chain $(\omega^{(i)})_{i=0}^m$ such that $\omega^{(0)}=\omega$, $\omega^{(m)}_B=\1_{O}$ and $|\omega^{(i)}|\coloneqq\sum_{x\in B}\omega^{(i)}_x\le k-d-4$ for all $i\in\{0,\dots,m\}$. Let $\cG$ be the set of good configurations. By Theorem~\ref{thm:lower} applied to the chain $(\omega^{(m-i)}+\omega^{(m)}+\1_O)_{i=0}^m$, we obtain $\cG\cap\cE_0=\varnothing$. Indeed, we may apply Theorem~\ref{thm:lower} to $\bbT_m$ rather than to $\bbZ^d$ when $m\ge 4+3d2^{k+1}$: consider a counterexample chain on the torus of minimal length; then all its configurations are supported in $B_k+\{-1,0,1\}^d$, so it lifts to a chain on $\bbZ^d$ that contradicts Theorem~\ref{thm:lower}.
Consequently,
\[\var'(\1_{\cG})=\pi(\cG)(1-\pi(\cG))\ge \pi(\cE_1)\pi(\cE_0)=p(1-p)^{2|B|-1}.\]
Moreover,
\begin{align*}
\cD'\left(\1_{\cG}\right)&{}=-\pi\left(\1_\cG\cL'\1_\cG\right)\le 2\sum_{x\in\bbT_n}\pi\left(\omega\in\cG\not\ni\omega+\1_{x-T_d}\right)\\
&{}\le 2|B+T_d|\pi\left(\sum_{x\in B}\omega_x>k-d-4-|T_d|\right)
\le 2|B+T_d|(p|B|)^{k-2d-4}.
\end{align*}
Putting these inequalities together, we obtain
\begin{align}
\nonumber\frac{\var'(\1_\cG)}{\cD'(\1_\cG)}&{}\ge \frac{p(1-p)^{2|B|-1}}{2|B+T_d|(p|B|)^{k-2d-4}}\ge p^2\left(p2^{dk}\right)^{2d+4-k}\\
&{}\ge e^{(2-\varepsilon d\ln 2)\beta(k-2d-4)-4\beta}/4\ge e^{\beta k(2-\varepsilon d\ln 2)(1-\varepsilon(2d+9))}\ge e^{2\beta k (1-O(\varepsilon))}.\label{eq:trel:lower'}
\end{align}

Recalling \eqref{eq:def:trel}, the proof is concluded by Lemma~\ref{lem:Phi:bij} below.
\end{proof}

\begin{lemma}
\label{lem:Phi:bij}
Whenever $n$ is a power of $2$ and $d$ is even, the map $\Phi$ from \eqref{eq:def:Phi} is bijective.
\end{lemma}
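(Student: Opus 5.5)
The plan is to linearise $\Phi$ and show that the resulting linear operator on $\bbF_2^{\bbT_n}$ is invertible by an explicit Frobenius computation, reusing the identity that produced large plaquettes in Section~\ref{subsec:complexity}.

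First we identify spins with $\bbF_2$ via $s_x=(1-\sigma_x)/2$. Then $[\sigma]_{x+T_d}=(-1)^{\sum_{y\in x+T_d}s_y}$, so
\[\Phi(\sigma)_x=\sum_{y\in x+T_d}s_y\pmod 2.\]
Thus $\Phi$ becomes an $\bbF_2$-linear map $M\colon\bbF_2^{\bbT_n}\to\bbF_2^{\bbT_n}$ between spaces of equal finite dimension $n^d$. It therefore suffices to prove that $M$ is injective, or to exhibit an inverse.

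Next we use the algebraic picture of Proposition~\ref{prop:polymom}, now on the torus. Let $R=\bbF_2[x_1,\dots,x_d]/(x_1^n-1,\dots,x_d^n-1)$, the group algebra of $(\bbZ/n\bbZ)^d$, and identify $s$ with $S=\sum_y s_yx^y\in R$. Then $Ms$ corresponds to $P\cdot S$, where
\[P=1+x_1^{-1}+\dots+x_d^{-1}.\]
Indeed, the coefficient of $x^x$ in $PS$ is $\sum_{t\in T_d}s_{x+t}$. So $M$ is multiplication by $P$ in the commutative ring $R$, and it is enough to show that $P$ is a unit.

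Since $R$ has characteristic $2$ and $n=2^k$, iterating the Frobenius endomorphism gives
\[P^{n}=1+x_1^{-n}+\dots+x_d^{-n}=1+d=1\]
in $R$, because each $x_i^{-n}=1$ and $d$ is even. Hence $P\cdot P^{n-1}=1$, so multiplication by $P^{n-1}$ inverts $M$, and $\Phi$ is bijective. Composing with the bijection $\sigma\mapsto s$ does not affect this conclusion.

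There is no real obstacle here. The only points needing care are the orientation of the plaquette (whether $P$ involves $x_i$ or $x_i^{-1}$, which is irrelevant) and the observation that evenness of $d$ is exactly what makes $1+d$ vanish. For odd $d$ one instead gets $P^n=0$, so $P$ is nilpotent and $\Phi$ fails to be bijective.
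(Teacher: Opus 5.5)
Your proof is correct, and it uses a different mechanism from the paper's. The paper's argument runs as follows.
\begin{itemize}
\item $\Phi$ is a homomorphism between groups of the same size, so it suffices to hit every basis vector $\1_x$.
\item By translation invariance, only $\1_O$ needs to be hit.
\item The Sierpi\'nski gasket \eqref{eq:Sierpinski} is exhibited as an explicit preimage of $\1_O$.
\end{itemize}
The verification is left implicit. It is a parity computation on multinomial coefficients, together with the observation that the $d+1$ corner lamps of the gasket all land on $O$ in the torus.

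You instead identify $\Phi$ with multiplication by $P=1+x_1^{-1}+\dots+x_d^{-1}$ in the group algebra $\bbF_2[(\bbZ/n\bbZ)^d]$. Frobenius then gives $P^n=1+d=1$, so $P^{n-1}$ is an explicit inverse.

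The two proofs in fact produce the same object. The coefficient of $x^{-m}$ in $P^{n-1}$ is $\binom{n-1}{|m|}\binom{|m|}{m_1,\dots,m_d}$. Use Lucas' theorem:
\begin{itemize}
\item $\binom{2^k-1}{j}$ is odd for all $0\le j<2^k$;
\item a multinomial coefficient is odd if and only if the $m_i$ have disjoint binary digits, which forces $|m|<2^k$ when all $m_i<2^k$.
\end{itemize}
So this coefficient agrees mod $2$ with the paper's gasket value on $-y\in\{0,\dots,n-1\}^d$. Since no exponent of $P^{n-1}$ exceeds $n-1$, nothing wraps around, and $P^{n-1}$ is exactly the gasket.

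Your route replaces that unwritten bookkeeping with the one-line Frobenius identity already used for large plaquettes in Section~\ref{subsec:complexity}. It also makes the role of the parity of $d$ transparent, and shows for free that $P$ is nilpotent, hence $\Phi$ is not bijective, when $d$ is odd.

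The paper's route gives a concrete geometric picture, which it reuses in the proof of Lemma~\ref{lem:Phi:mu:pi} to flip a single lamp. Your identity, read in the Laurent ring, gives that too: $P\cdot P^{n-1}=1+\sum_i x_i^{-n}$ is one lamp at $O$ plus $d$ distant lamps.
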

\begin{proof}
Note that $|\Omega_{\bbT_n}|=|\Omega'|$, that $\Phi$ is a group homomorphism, and that $\Omega'_{\bbT_n}$ is a vector space with basis $(\1_x)_{x\in\bbT_n}$. Therefore, it is enough to show that, for any $x\in\bbT_n$, there exists $\sigma\in\Omega_{\bbT_n}$ such that $\Phi(\sigma)=\1_{x}$. By symmetry we only show this for $x=O$. Indeed, the Sierpinski gasket configuration (see Figure~\ref{fig:Sierpinski})
\begin{equation}
\label{eq:Sierpinski}
\sigma_y=(-1)^{\binom{-\sum_{i=1}^dy_i}{-y_1,\dots,-y_d}}
\end{equation}
for $-y\in\{0,\dots,n-1\}^d$ verifies this property for $d$ even and $n$ power of $2$.
\end{proof}

\begin{figure}
    \centering    \begin{tikzpicture}[x=1cm,y=1cm,scale=0.5,every circle/.style={scale=2,radius=3pt}]
\draw[gray,step=1] (-7.5,-7.5) grid (0.5,0.5);
\foreach \x in {0,...,7} {
    \draw[gray] (-\x-0.5, 0.5)--(0.5,-\x-0.5);}
\foreach \x in {1,...,7} {
    \draw[gray] (-\x+0.5, -7.5)--(-7.5,-\x+0.5);}
    \fill (0,0) circle;
  	\fill (-1,0) circle;
  	\fill (-2,0) circle;
  	\fill (-3,0) circle;
  	\fill (-4,0) circle;
  	\fill (-5,0) circle;
  	\fill (-6,0) circle;
  	\fill (-7,0) circle;
  	\fill (0,-1) circle;
  	\fill (0,-2) circle;
  	\fill (0,-3) circle;
  	\fill (0,-4) circle;
  	\fill (0,-5) circle;
  	\fill (0,-6) circle;
  	\fill (0,-7) circle;
  	\fill (-2,-1) circle;
  	\fill (-1,-2) circle;
  	\fill (-4,-1) circle;
  	\fill (-1,-4) circle;
  	\fill (-6,-1) circle;
  	\fill (-1,-6) circle;
  	\fill (-4,-2) circle;
  	\fill (-2,-4) circle;
  	\fill (-5,-2) circle;
  	\fill (-2,-5) circle;
  	\fill (-4,-3) circle;
  	\fill (-3,-4) circle;
  	\fill[red,opacity=0.4] (0,0) rectangle (0.5,0.5);
  	\fill[red,opacity=0.4] (-7.5,0.5)--(-7,0)--(-7.5,0)--cycle;
  	\fill[red,opacity=0.4] (0.5,-7.5)--(0,-7)--(0,-7.5)--cycle;
  	\draw[dashed] (-7.5,-7.5) rectangle (0.5,0.5);
  	\end{tikzpicture}
    \caption{A Sierpi\'nski gasket spin configuration for $d=2$. Dots represent $-1$ spins. Viewed on the torus $\bbT_8$, this configuration has only one ON lamp, corresponding to the red triangle having three spins in state $-1$. Viewed on $\bbZ^2$, it has three ON lamps, corresponding to the three red triangles, parts of which are drawn.}
    \label{fig:Sierpinski}
\end{figure}

To prepare for the infinite volume case, we need the following substitute for Lemma~\ref{lem:Phi:bij} which fails in infinite volume.
\begin{lemma}
\label{lem:Phi:mu:pi}
Consider $\Omega'=\bbF_2^{\bbZ^d}$ and
\[\Phi\colon\Omega\to\Omega'\colon\sigma\mapsto\left(\frac{1-[\sigma]_{x+T_d}}2\right)_{x\in\bbZ^d}.\]
Let $\Phi(\mu)$ denote the push forward of $\mu$ under $\Phi$ and $\pi$ denote the Bernoulli product measure on $\bbZ^d$ with parameter $p=1/(1+e^{2\beta})$. Then $\Phi(\mu)=\pi$.
\end{lemma}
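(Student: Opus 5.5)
We will show that $\Phi(\mu)$ and $\pi$ agree on every cylinder event, i.e.\ on finite-dimensional marginals. It suffices to prove that, for every finite $A\subset\bbZ^d$ and every $\eta\in\bbF_2^A$, we have $\Phi(\mu)(\omega_A=\eta)=\prod_{x\in A}p^{\eta_x}(1-p)^{1-\eta_x}$. We will use the DLR equation \eqref{eq:Gibbs:DLR} with a suitable finite volume $\Lambda$ in which the lamp map is a bijection, so that $\mu_\Lambda^\tau$ factorises over lamps.

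\textbf{Choice of volume.} Fix $A$ and choose $N$ large with $A\subset -\Lambda_{d,N}+c$ for a suitable translate. The key observation is a triangular structure. Order sites so that, for each lamp $x+T_d$, the ``top'' corner (say $x$ itself, the minimal corner) is determined last. Concretely, take $\Lambda$ a finite set and $A'=\{x: x+T_d\cap\Lambda\neq\varnothing\}$, and look for a pair $(\Lambda,L)$ with $L\supseteq A$ a set of lamp positions such that the map
\[\sigma_\Lambda\mapsto\Bigl(\tfrac{1-[\sigma_\Lambda\cdot\tau]_{x+T_d}}2\Bigr)_{x\in L}\]
is a bijection $\Omega_\Lambda\to\bbF_2^L$ for every boundary condition $\tau$, while every lamp touching $\Lambda$ either lies in $L$ or is decoupled. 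The simplest choice is $\Lambda$ a ``staircase'' set: let $L$ be finite and $\Lambda=L$, with lamp at $x$ equal to the product over $x+T_d$. A lamp $x+T_d$ with $x\notin L$ may still meet $\Lambda$, which breaks the factorisation. To avoid this, exploit the fact that only the lamps in $L$ need be controlled. Lamps meeting $\Lambda$ but outside $L$ contribute energy depending on $\sigma_\Lambda$, so we must ensure that no such lamps exist except those whose value is fixed.

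\textbf{Alternative route: dynamics/invariance.} The cleaner argument is the following. In finite volume $\Lambda=x_0+\Lambda_{d,N}$ (a simplex), the Hamiltonian involves all lamps $y+T_d$ meeting $\Lambda$. Change variables as follows. For $y$ with $y+T_d\subseteq\Lambda$, or more generally for $y\in\Lambda$ (each $y\in\Lambda$ is the minimal corner of exactly one plaquette), the map $\sigma_\Lambda\mapsto(\text{lamp at }y)_{y\in\Lambda}$, given $\tau$, is a bijection by triangularity. Indeed, solving in decreasing order of $y_1+\dots+y_d$, the lamp at $y$ determines $\sigma_y$ from the values at $y+e_i$, which are already known (from $\Lambda$ or from $\tau$). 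The remaining lamps meeting $\Lambda$ are those $y+T_d$ with $y\notin\Lambda$ but $y+e_i\in\Lambda$, i.e.\ lamps on the lower faces. These are the source of trouble: they depend on $\sigma_\Lambda$.

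\textbf{Main obstacle and its resolution.} The lower-face lamps prevent exact factorisation in finite volume, so I would instead use a limiting argument. Under $\mu_\Lambda^\tau$, the lamp vector $(\omega_y)_{y\in\Lambda}$ has density proportional to $\prod_{y\in\Lambda}(p/(1-p))^{\omega_y}\cdot F(\omega)$, where $F$ accounts for the lower-face lamps. Since the lamps are a bijective function of $\sigma_\Lambda$, the boundary lamps are functions of $\omega$ and $\tau$. Comparing with $\pi$ restricted to $A$, deep inside $\Lambda$, amounts to showing that $F$ becomes asymptotically independent of $\omega_A$. Here Theorem~\ref{th:upper} is the key input. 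Flipping the lamps $\omega_A$ corresponds to changing $\sigma_\Lambda$ by a fixed switch pattern, and the effect on the boundary lamps is governed by cycle counting. The ratio of partition functions with and without a prescribed modification is controlled by $G_{d,n}(t)\le1+n^{-s}$ with $t=\tanh\beta$-type weight. I expect this step, namely bounding the ratio $Z(\omega_A=\eta)/Z$ via the cycle generating function to within $1+o(1)$ as $N\to\infty$ uniformly in $\tau$, to be the main obstacle.

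Granting this estimate, the DLR equation \eqref{eq:Gibbs:DLR} gives $\mu(\Phi^{-1}\{\omega_A=\eta\})=\pi(\omega_A=\eta)(1+o(1))$. Letting $N\to\infty$ concludes the proof.
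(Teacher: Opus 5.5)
Your proposal reduces the lemma to a uniform-in-$\tau$ estimate on the finite-volume lamp marginal and then explicitly grants that estimate. The DLR step is routine, so that estimate is the whole content of the lemma. As it stands this is a genuine gap.

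Your first two steps do not help with it. The triangular change of variables only rewrites the lamp law as $\pi$ tilted by the factor $F$ coming from the lower-face lamps, and $F$ cannot be controlled pointwise. Flipping one lamp of $A$ while keeping the other internal lamps fixed changes $\sigma_\Lambda$ by a Pascal-simplex pattern modulo $2$, which may toggle many lower-face lamps. So $F(\omega')/F(\omega)$ is in general far from $1$. What you need is the averaged statement $\pi(F\mid\omega_A=\eta)\approx\pi(F)$, and the sentence ``the effect on the boundary lamps is governed by cycle counting'' supplies no mechanism for it.

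Your orientation is also the wrong one for Theorem~\ref{th:upper}. For $\Lambda=x_0+\Lambda_{d,N}$ the lamp set $\Lambda-T_d$ is not a simplex. The relevant cycle space $\cC_{(-T_d,\Lambda-T_d)}$ even contains cycles of size $2$ at the corners, e.g.\ $\{x_0-e_1,x_0-e_2\}$, since both lamps meet $\Lambda$ only at $x_0$. Its generating function is therefore at least $1+\theta^2$ and does not tend to $1$.

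The missing idea is the high-temperature expansion from the proof of the upper bound of Theorem~\ref{th:main}, with the indicator of the event inserted.
\begin{itemize}
\item Take $\Lambda=x_0-\Lambda_{d,N}$ with $A+T_d\subseteq\Lambda$. Then $\Lambda-T_d=x_0-\Lambda_{d,N+1}$, and $\cC:=\cC_{(-T_d,\Lambda-T_d)}$ is a reflected copy of $\cC_{d,N+1}$.
\item Write $u_x=[\sigma\cdot\tau]_{x+T_d}$. For $a\in A$ use $e^{\beta u_a}\1_{\{u_a=(-1)^{\eta_a}\}}=\tfrac12e^{\beta(-1)^{\eta_a}}(1+(-1)^{\eta_a}u_a)$, and for the other lamps use $e^{\beta u_x}=\cosh(\beta)(1+\theta u_x)$.
\item Summing over $\sigma_\Lambda$ kills every non-cycle. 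Since $e^{\pm\beta}/(2\cosh\beta)\in\{p,1-p\}$, this gives the exact identity
\[\mu^\tau_\Lambda\left(\Phi(\sigma)_A=\eta\right)=\pi(\omega_A=\eta)\,\frac{\sum_{X\in\cC}\theta^{|X\setminus A|}\epsilon_{\eta,\tau}(X)}{\sum_{X\in\cC}\theta^{|X|}\epsilon_{\tau}(X)},\]
with signs $\epsilon_{\eta,\tau}(X),\epsilon_\tau(X)\in\{\pm1\}$.
\item Since $|X\setminus A|\ge|X|-|A|$, numerator and denominator both differ from $1$ by at most $\theta^{-|A|}\bigl(G_{d,N+1}(\theta)-1\bigr)$, uniformly in $\tau$.
\item Theorem~\ref{th:upper}, with $m=\lceil1/(1-\theta)\rceil$, sends this to $0$ as $N\to\infty$, and the DLR equation concludes.
\end{itemize}

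Completed this way, your route is a direct, quantitative alternative to the paper's proof, and it does not need uniqueness of $\mu$. The paper instead invokes uniqueness of $\mu$, obtained from the path-coupling argument of Section~\ref{subsec:proba:upper}, to make the $\Lambda$-marginals nearly independent of $\tau$. It then notes that each $\tau$ pushes $\mu^\tau_{\Lambda'}$ forward to $\pi$ conditioned on the reachable coset $\Omega'_{\Lambda',\tau}$. Finally it averages over these cosets, which cover $\Omega'_{\Lambda'}$ by the Sierpi\'nski-gasket surjectivity, to recover $\pi$.
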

\begin{proof}
Fix $\varepsilon>0$ and finite $\Lambda\subset\bbZ^d$. We will prove (independently) in Section~\ref{subsec:proba:upper} that the Gibbs measure $\mu$ is unique. Consequently, we can find $\Lambda'\supset\Lambda+\{-1,0,1\}^d=:\Lambda_+$ large enough so that 
\[\sup_{\tau\in\Omega_{\bbZ^d\setminus\Lambda'}}\dtv\left(\mu^\tau_{\Lambda_+},\mu_{\Lambda_+}\right)<\varepsilon\]
($\dtv$ is the total variation distance, that is, half of the $\ell^1$-distance), where $\mu^\tau_{\Lambda_+}$ is the marginal of $\mu^\tau_{\Lambda'}$ on $\Lambda_+$ and similarly for $\mu$ instead of $\mu^\tau_{\Lambda'}$. Note that $\Omega\to\Omega'_\Lambda:\sigma\mapsto\Phi(\sigma)_{\Lambda}$ is local with support contained in $\Lambda_+$ and surjective. Indeed, we may change the value at a single vertex in $\Lambda$ by adding to $\sigma$ a large enough Sierpi\'nski gasket translated at that vertex (see \eqref{eq:Sierpinski} and Figure~\ref{fig:Sierpinski}). 

Consider the image under $\Phi$ of the chain with generator $\cL_{\Lambda'}^\tau$. As above, it is a Markov chain, with invariant measure $\pi^\tau_{\Lambda'}=\pi_{\Lambda'-T_d}(\cdot\mid\Omega'_{\Lambda',\tau})$, where $\Omega'_{\Lambda',\tau}\subseteq\Omega'_{\Lambda'}\coloneqq\bbF_2^{\Lambda'-T_d}$ is the irreducible component of $\cL$ induced by the boundary condition $\tau$. In particular, $\Omega'_{\Lambda',\tau}$ are either equal or disjoint for different $\tau\in\Omega_{\bbZ^d\setminus\Lambda'}$. By the surjectivity above, $\bigcup_{\tau\in\Omega_{\bbZ^d\setminus\Lambda'}}\Omega'_{\Lambda',\tau}=\Omega'_{\Lambda'}$, so we can find a probability measure $\mathrm d\theta(\tau)$ on $\Omega_{\bbZ^d\setminus\Lambda'}$ such that $\theta(\pi_{\Lambda'}^\tau)=\pi_{\Lambda'}$. Since, for any $\tau,\tau'\in\Omega_{\bbZ^d\setminus\Lambda'}$,
\[\dtv(\pi^\tau_{\Lambda},\pi^{\tau'}_{\Lambda})\le \dtv(\mu_{\Lambda+}^\tau,\mu_{\Lambda_+}^{\tau'})\le 2\varepsilon,\]
we deduce 
\[\dtv\left(\pi_{\Lambda},\Phi(\mu)_\Lambda\right)\le 2\varepsilon+\sup_{\tau\in\Omega_{\bbZ^d\setminus\Lambda'}}\dtv\left(\pi_\Lambda^\tau,\Phi(\mu)_\Lambda\right)\le2\varepsilon+\sup_{\tau\in\Omega_{\bbZ^d\setminus\Lambda'}}\dtv\left(\mu^\tau_{\Lambda_+},\mu_{\Lambda_+}\right)\le 3\varepsilon.\]
Therefore, $\pi=\Phi(\mu)$.
\end{proof}

\begin{proof}[Proof of the lower bound of Theorem~\ref{th:main}]
The rest of the proof proceeds exactly as the lower bound of Theorem~\ref{th:2k:asymptotics}, so we only indicate the non-trivial changes.

Fix $d\ge 2$ and $\beta>0$. Let $k=\lceil\beta/(d\ln 2)\rceil$, $L=3d2^k$, $B=\{-L,\dots,L\}^d$, so that 
\[e^{\beta}\le L^d\le |B|\le (3L)^d\le (18d)^de^{\beta}.\]
Define $\cG\subset\Omega'$ as in the proof of Theorem~\ref{th:2k:asymptotics} with chains featuring at most $k$ lamps ON (instead of $k-d-4$). Then, we similarly get
\begin{equation}
\label{eq:trel:lower}
\frac{\var'(\1_\cG)}{\cD'(\1_\cG)}\ge \frac{p(1-p)^{2|B|-1}}{2|B+T_d|(p|B|)^{k-d}}\ge c e^{c\beta^2}\end{equation}
for a small enough $c=c(d)>0$. By Lemma~\ref{lem:Phi:mu:pi} and \eqref{eq:trel:lower}, plugging the local function $\1_{\Phi^{-1}(\cG)}$ into \eqref{eq:def:trel} gives the desired lower bound on $\trel$.
\end{proof}

\subsection{Lower bound via entangled configurations}
In this section, we prove the lower bound of Theorem~\ref{th:finite}, using Theorem~\ref{th:complexity:2}.

\begin{proof}[Proof of the lower bound of Theorem~\ref{th:finite}]
Fix $d\ge 2$, $\alpha_d\le1$ and $n_d$ as in Theorem~\ref{th:complexity:2}. Let $\beta>0$ and $n\ge n_d$ satisfy $n\le e^{2\beta/(5d)}$. Let $\Lambda=-\Lambda_{d,n}$. Define the mapping 
\[\Phi\colon\Omega_{\Lambda}\to\bbF_2^{\bbZ^d}\colon\sigma\mapsto\left(\frac{1-[\sigma\cdot\bone_{\bbZ^d\setminus\Lambda}]_{x+T_d}}2\right)_{x\in\bbZ^d}.\]

\begin{lemma}\label{lem:admissible:preimage}
Let $\Omega'=\{X\subset\bbZ^d\mid -X\subseteq\Lambda-T_d,X\text{ is admissible}\}$. Then the function $\Phi$ is bijective onto its image 
\[\Phi(\Omega_\Lambda)=\left\{\1_{-X}\mid X\in\Omega'\right\},\]
where we recall that $\1_{-X}$ denotes the characteristic function of $-X$.
\end{lemma}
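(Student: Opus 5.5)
We write each $\sigma\in\Omega_\Lambda$ through its set of $-1$ spins $S(\sigma)=\{x\in\Lambda\mid\sigma_x=-1\}$. The plan is to prove three things in turn: $\Phi$ sends this set to the sum of the corresponding (reflected) plaquettes, so its image lies in the claimed set; every admissible $X$ in $\Omega'$ is reached; and $\Phi$ is injective.

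\emph{Step 1: $\Phi$ as a sum of reflected plaquettes.} Outside $\Lambda$ all spins equal $+1$, so a lamp $\Phi(\sigma)_x$ is ON exactly when $x+T_d$ contains an odd number of points of $S(\sigma)$. Hence, working in $\bbF_2^{\bbZ^d}$,
\[\Phi(\sigma)=\sum_{y\in S(\sigma)}\1_{y-T_d}.\]
Negating, $-\supp\Phi(\sigma)$ is the sum modulo $2$ of the plaquettes $-y+T_d$ for $y\in S(\sigma)$. Two consequences follow. First, $X\coloneqq-\supp\Phi(\sigma)$ is admissible. Second, $\supp\Phi(\sigma)\subseteq S(\sigma)-T_d\subseteq\Lambda-T_d$, which says exactly that $-X\subseteq\Lambda-T_d$. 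This gives the inclusion of $\Phi(\Omega_\Lambda)$ in the claimed set.

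\emph{Step 2: surjectivity.} Take $X\in\Omega'$. By Proposition~\ref{prop:polymom}, $P_X=(1+x_1+\dots+x_d)Q$ for a unique Laurent polynomial $Q$, since $R_d$ is a domain. Let $Y$ be the support of $Q$; then $X$ is the sum of the plaquettes $a+T_d$, $a\in Y$. The main obstacle is to show $Y\subseteq-\Lambda=\Lambda_{d,n}$. Once that holds, the spin configuration with $S(\sigma)=-Y$ satisfies $\Phi(\sigma)=\1_{-X}$ by Step 1.

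To prove $Y\subseteq\Lambda_{d,n}$, I would use a Newton-polytope/extremal-monomial argument, as in the base case of Lemma~\ref{le:small_admissible}. By hypothesis $X\subseteq\Lambda_{d,n}+T_d$. The supporting-face rule for products of Laurent polynomials says: for each linear functional $\ell$, the face of $\operatorname{conv}(X)$ maximising $\ell$ is the Minkowski sum of the faces of $\operatorname{conv}(T_d)$ and $\operatorname{conv}(Y)$ maximising $\ell$. Apply this with $\ell=-x_i$. Its minimum over $X$ is at least $0$ and its minimum over $T_d$ is $0$, so $\min_Y x_i\ge0$. Apply it with $\ell=\sum_i x_i$. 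Its maximum over $X$ is at most $n+1$ and its maximum over $T_d$ is $1$, so $\max_Y\sum_i x_i\le n$. These inequalities say precisely that $Y\subseteq\Lambda_{d,n}$.

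\emph{Step 3: injectivity.} Suppose $\Phi(\sigma)=\Phi(\sigma')$. By Step 1, $(1+x_1+\dots+x_d)\,(P_{-S(\sigma)}-P_{-S(\sigma')})=0$ in $R_d$. Since $R_d$ has no zero divisors, $S(\sigma)=S(\sigma')$, so $\sigma=\sigma'$. Combining Steps 1–3, $\Phi$ is a bijection onto $\{\1_{-X}\mid X\in\Omega'\}$.
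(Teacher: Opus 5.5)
Your proof is correct and has the same structure as the paper's: image inclusion, then surjectivity by confining the set of plaquette positions to $\Lambda$ (up to your sign convention), then injectivity. The paper does the extremal step by hand, taking a coordinatewise-maximal plaquette position with a positive coordinate, or one of minimal coordinate sum, and exhibiting an ON lamp outside $\Lambda-T_d$; you invoke the Newton-polytope supporting-face rule instead, and your injectivity via $R_d$ having no zero divisors replaces the paper's reduction of $\Phi(\sigma)=\bzero$ to that same extremal argument.
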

\begin{proof}
We first claim that, for every $\sigma\in\Omega_\Lambda$, $\Phi(\sigma)=\1_{-X}$ for some $X\in\Omega'$. Indeed, \[\Phi(\sigma)_x=\frac{1-[\sigma\cdot\bone_{\bbZ^d\setminus\Lambda}]_{x+T_d}}{2}=\frac{1-\prod_{y\in x+T_d}1}{2}=0\] for all $x\in\bbZ^d\setminus\Lambda-T_d$. To see that $\Phi(\sigma)$ is admissible, it suffices to change one spin at a time starting with $\bone_{\Lambda}$ and take the image under $\Phi$. Hence, $\Phi(\Omega_\Lambda)\subseteq\{\1_{-X}\mid X\in\Omega'\}$.

Fix $X\in\Omega'$. By Definition~\ref{def:admissible}, $\1_{-X}=\sum_{y\in Y}\1_{y-T_d}$ for a finite set $Y\subset\bbZ^d$. Assume for a contradiction that $Y\not\subseteq\Lambda$. Assume that $z\in Y$ is such that, for any $z'\in Y$, if $z_i'\ge z_i$ for all $i\in\{1,\dots,d\}$, then $z'=z$ and further assume that $z_1> 0$ (we proceed analogously for other coordinates). Then $\1_{-X}(z)=\sum_{y\in Y}\1_{y-T_d}(z)=\1_{z-T_d}(z)=1$, contradicting the assumption $-X\subseteq\Lambda-T_d$. Assume that $z\in Y$ is such that $\langle z,(1,\dots,1)\rangle=\min_{y\in Y}\langle y,(1,\dots,1)\rangle<-n$ and $z_1\le z_1'$ for any $z'\in Y$ with $\langle z',(1,\dots,1)\rangle=\langle z,(1,\dots,1)\rangle$. Then $\1_{-X}(z-e_1)=\sum_{y\in Y}\1_{y-T_d}(z-e_1)=\1_{z-T_d}(z-e_1)=1$, again contradicting $-X\subseteq\Lambda-T_d$. This proves that $Y\subseteq\Lambda$ (recall \eqref{eq:def:Lambda:dn}). Hence, $\Phi((-\bone_Y)\cdot\bone_{\Lambda\setminus Y})=\1_{-X}$, proving surjectivity.

If there are two preimages of $\1_{-X}$, taking their point-wise product, we obtain $\sigma\in\Omega_\Lambda$ with $\Phi(\sigma)=\bzero$. Proceeding as above, we get that the support of $\sigma$ is contained in $\Lambda$, but since $\varnothing$ is admissible and contained in any translate of $\Lambda$, this implies that $\sigma=\bone$, yielding the desired injectivity.
\end{proof}
By Theorem~\ref{th:complexity:2}, let $X\in\Omega'$ be such that any chain from $X$ to $\varnothing$ goes through 
\[\cX\coloneqq\left\{X'\in\Omega'\middle|\;|X'\setminus X|\ge n^{\alpha_d}+3|X\setminus X'|/2\right\}.\]
By Lemma~\ref{lem:admissible:preimage}, let $\sigma=\Phi^{-1}(\1_{-X})$. Let $\cG\subset\Omega_\Lambda$ be the set of configurations which can be reached from $\bone_\Lambda$ by switching the state of one spin in $\Lambda$ at a time without going through $\partial\cG\coloneqq\Phi^{-1}(\{\1_{-X'}\mid X'\in\cX\})$. In particular, $\sigma\not\in\cG\ni\bone_\Lambda$.

Thus, 
\begin{equation}
\label{eq:finite:lower:var}
\var_{\Lambda}^\bone(\1_\cG)\ge \frac12\min\left(\mu_\Lambda^\bone(\bone_\Lambda),\mu_{\Lambda}^\bone(\sigma)\right)=\frac{\mu_\Lambda^\bone(\sigma)}2.
\end{equation}
Moreover, by \eqref{eq:def:H}, for $X'\in\Omega'$, \[H_\Lambda^\bone(\bone_\Lambda)=-|\Lambda-T_d|=H_\Lambda^\bone\left(\Phi^{-1}\left(\1_{-X'}\right)\right)-2|X'|,\]
so
\begin{equation}
\label{eq:finite:lower:mu:sigma}
\mu^\bone_\Lambda(\sigma)=\frac{e^{-2\beta|X|}}{\sum_{X'\in\Omega'}e^{-2\beta|X'|}}\ge\frac{e^{-2\beta|X|}}{(1+e^{-2\beta})^{|\Lambda-T_d|}}\ge \exp\left(-2\beta|X|-e^{-2\beta}|\Lambda-T_d|\right).\end{equation}

Similarly, recalling \eqref{eq:def:D}, we get
\begin{align}
\nonumber\cD_\Lambda^\bone(\1_\cG)&{}\le 2 \sum_{x\in\Lambda}\sum_{\sigma'\in\Omega_\Lambda}\mu_\Lambda^\bone(\sigma')\1_{\sigma'\not\in\cG\ni(\sigma')^x}\le 2|\Lambda|\mu^\bone_\Lambda(\partial\cG)=2|\Lambda|\sum_{X'\in\cX}\mu_\Lambda^\bone\left(\Phi^{-1}\left(\1_{-X'}\right)\right)\\
\nonumber&{}\le 2|\Lambda|\sum_{X'\in\cX}e^{-2\beta|X'|}\le 2|\Lambda|e^{-2\beta|X|}\sum_{X'\in\cX}e^{-2\beta(4n^{\alpha_d}/5+(|X'\setminus X|+|X\setminus X'|)/5)}\\
\nonumber&{}\le 2|\Lambda|e^{-2\beta(|X|+4n^{\alpha_d}/5)}\sum_{Y\subseteq\Lambda-T_d}e^{-2\beta|Y|/5}=2|\Lambda|e^{-2\beta(|X|+4n^{\alpha_d}/5)}\left(1+e^{-2\beta/5}\right)^{|\Lambda-T_d|}\\
&{}\le 2|\Lambda|\exp\left(-2\beta(|X|+4n^{\alpha_d}/5)+|\Lambda-T_d|e^{-2\beta/5}\right).
\label{eq:finite:lower:D}
\end{align}

Finally, recalling \eqref{eq:def:trel} and combining \eqref{eq:finite:lower:var}, \eqref{eq:finite:lower:mu:sigma} and \eqref{eq:finite:lower:D}, we obtain
\begin{align*}\trel^{\Lambda,\bone}&{}\ge \frac{\exp(8\beta n^{\alpha_d}/5-|\Lambda-T_d|e^{-2\beta/5}-|\Lambda-T_d|e^{-2\beta})}{4|\Lambda|}\\&{}\ge \exp\left(8\beta n^{\alpha_d}/5-2^{d+1}-(d+2)\ln(2d)-2\beta/5\right)\ge\frac{e^{\beta n^{\alpha_d}}}{C},\end{align*}
since $|\Lambda|\le|\Lambda-T_d|\le (2n)^d\le 2^de^{2\beta/5}$, taking $C=(2d)^{d+2}e^{2^{d+1}}$.
\end{proof}

\subsection{Upper bound via bisection}
In this section we prove the upper bound of Theorem~\ref{th:2k:asymptotics} via the bisection and canonical paths techniques.
\begin{proof}[Proof of the upper bound of Theorem~\ref{th:2k:asymptotics}]
Fix $d=2$ and $\beta>0$. The result follows immediately, once we prove by induction that, for $k\ge 0$
\[\trel^{\bbT_{2^k}}\le \left(41472\cdot e^{2\beta}\right)^{k}.\]
The base case $k=0$ is clear and uses that $d$ is even (the dynamics is not irreducible for $d$ odd).

Assume $k\ge 1$ and let $n=2^k$. Recall $\Phi$ and $p$ from \eqref{eq:def:Phi} and \eqref{eq:def:p}. As in Section~\ref{subsec:lower:bottleneck}, let $\pi$ denote the product Bernoulli measure with parameter $p$ on $\Omega'_{\bbT_{n}}=\bbF_2^{\bbT_n}$ and $\var'$ be its variance. By Lemma~\ref{lem:Phi:bij}, we may reason in terms of the lamp Markov chain with generator as in \eqref{eq:def:L:prime}. For $r=(r_1,\dots,r_d)\in\{0,1\}^d$, let 
\[\bbT^r_n=\left\{(x_1,\dots,x_d)\in\bbT_n\mid\forall i\in\{1,\dots,d\},x_i=r_i\pmod 2\right\}=r+2\bbT_n,\]
taking into account that $n$ is even. For $A\subseteq\bbT_n$ and $\omega\in\Omega'_{\bbT_n}$, we write $\omega^A=\omega+\1_A$. Let
\begin{align}
\cL^rf(\omega)&{}=\sum_{x\in\bbT_n^r}\frac{\pi(\omega^{x-2T_d})}{\pi(\omega)+\pi(\omega^{x-2T_d})}\left(f\left(\omega^{x-2T_d}\right)-f(\omega)\right),\\
\label{eq:def:Dr}\cD^r(f)&{}=\sum_{\omega\in\Omega'_{\bbT_n}}\sum_{x\in\bbT_n^r}\frac{\pi(\omega)\pi(\omega^{x-2T_d})}{\pi(\omega)+\pi(\omega^{x-2T_d})}\left(f\left(\omega^{x-2T_d}\right)-f(\omega)\right)^2=-\pi\left(f\cL^rf\right),\\
\label{eq:def:trelr}\trel^r&{}=\left(\inf\left\{\frac{\cD^r(\pi_{\bbT_n\setminus\bbT_n^r}(f))}{\var'\left(\pi_{\bbT_n\setminus\bbT_n^r}(f)\right)}\middle| f\colon\Omega'_{\bbT_n}\to\bbR,\var'(\pi_{\bbT_n\setminus\bbT_n^r}(f))\neq0\right\}\right)^{-1}.
\end{align}

Fix $f\colon\Omega'_{\bbT_n}\to\bbR$. 
By the Poincar\'e inequality for independent Glauber dynamics (the Efron--Stein inequality) and \eqref{eq:def:trelr}, we have
\begin{equation}
\label{eq:var'}\var'(f)\le \sum_{r\in\{0,1\}^d}\pi\left(\var'_{\bbT^r_{n}}(f)\right)\le \sup_{r\in\{0,1\}^d}\trel^{r}\sum_{r\in\{0,1\}^d}\pi\left(\cD^r(f)\right)=\trel^{\bbT_{n/2}}\sum_{r\in\{0,1\}^d}\pi\left(\cD^r(f)\right).\end{equation}

We finally seek to relate each term in $\pi(\cD^r(f))$ to appropriate terms featuring in $\cD(f)$. Consider $r\in\{0,1\}^d$, $x\in\bbT^r_{n}$ and $\omega\in\Omega'_{\bbT_n}$. Set $\Omega''=\{\omega\in\Omega'_{\bbT_n}\mid\sum_{y\in x-2T_d}\omega_y\in\{2,3\}\}$. In view of the symmetry between $\omega$ and $\omega^{x-2T_d}$ in \eqref{eq:def:Dr}, we may assume $\omega\in\Omega''$. Define the configurations $\omega=\omega^{(0)},\omega^{(1)},\omega^{(2)},\omega^{(3)}=\omega^{x-2T_d}$ as indicated in Figure~\ref{fig:paths}. The Cauchy--Schwarz inequality gives
\[\left(f\left(\omega^{x-2T_d}\right)-f(\omega)\right)^2\le 3\sum_{i=1}^3\left(f\left(\omega^{(i)}\right)-f\left(\omega^{(i-1)}\right)\right)^2.\]
Summing over $\omega\in\Omega''$, we get
\newlength{\lhswidth}
\settowidth{\lhswidth}{${}= 6$} 
\begin{align}
\label{eq:paths}
&2\sum_{\omega\in\Omega''}\frac{\pi(\omega)\pi(\omega^{x-2T_d})}{\pi(\omega)+\pi(\omega^{x-2T_d})}\left(f\left(\omega^{x-2T_d}\right)-f(\omega)\right)^2\le 2\sum_{\omega\in\Omega''}\pi(\omega)\left(f\left(\omega^{x-2T_d}\right)-f(\omega)\right)^2\\
\nonumber&{}\le 6\sum_{i=1}^3\sum_{\omega\in\Omega''}\pi(\omega)\left(f\left(\omega^{(i)}\right)-f\left(\omega^{(i-1)}\right)\right)^2 \underbrace{\sum_{y\in x-T_d}\1_{\omega^{(i-1)}=(\omega^{(i)})^{y-T_d}}}_{=1}\underbrace{
\sum_{\omega'\in \Omega'_{\bbT_n}}\1_{\omega'=\omega^{(i)}}\frac{\pi(\omega')\pi((\omega')^{y-T_d})}{\pi(\omega')\pi((\omega')^{y-T_d})}}_{=1}\\
&{}= 6
\begin{multlined}[t][\dimexpr\linewidth - \lhswidth - 0.5em]
\sum_{\omega'\in\Omega'_{\bbT_n}}\sum_{y\in x-T_d}\left(f\left(\omega'\right)-f\left(\left(\omega'\right)^{y-T_d}\right)\right)^2\frac{\pi(\omega')\pi((\omega')^{y-T_d})}{\pi(\omega')+\pi((\omega')^{y-T_d})}\\
\times\underbrace{\sum_{i=1}^3\sum_{\omega\in\Omega''}\1_{\omega^{(i)}=\omega',\omega^{(i-1)}=(\omega')^{y-T_d}}\frac{\pi(\omega)(\pi(\omega')+\pi((\omega')^{y-T_d}))}{\pi(\omega')\pi((\omega')^{y-T_d})}}_{=:K(\omega,\omega',i,y)}.\end{multlined}
\nonumber
\end{align}

A direct inspection of Figure~\ref{fig:paths} shows that, for any $\omega\in\Omega''$ and any $j\in\{0,\dots,3\}$ we have $\|\omega^{(j)}\|_1-\|\omega\|_1\le 1$ (this property is specific to $d=2$). Consequently,
\begin{align*}\sup_{\omega'\in\Omega'_{\bbT_n},y\in x-T_d}K\left(\omega,\omega',i,y\right)&{}\le \frac2p\sup_{\omega'\in\Omega'_{\bbT_n}}\sum_{i=1}^3\sum_{\omega\in\Omega''}\1_{\omega^{(i)}=\omega'}\\
&{}\le \frac{6}{p}\left|\left\{\omega\in\Omega''\middle|\omega_{\bbT_n\setminus (x-T_d-T_d)}=\omega'_{\bbT_n\setminus(x-T_d-T_d)}\right\}\right|=\frac{6\cdot 2^6}{2p}.\end{align*}

Combining this with \eqref{eq:def:trel}, \eqref{eq:def:D}, \eqref{eq:var'}, \eqref{eq:paths}, we obtain
\[\trel^{\bbT_n}\le \trel^{\bbT_{n/2}}\cdot \frac{1152}p\cdot6\cdot\sup_{y\in\bbT_n}\left|\left\{x\in\bbT_n\middle| y\in x-T_d\right\}\right|=\trel^{\bbT_{n/2}}\cdot\frac{20736}{p},\]
completing the induction step in view of \eqref{eq:def:p}.
\end{proof}
\begin{figure}
    \centering
    \begin{tikzpicture}[x=0.5cm,y=0.5cm]
    \clip (-2.5,-2.3) rectangle (12.5,0.3);
\foreach \x in {0,...,2} {
    \draw[gray] (-\x, 0)--(0,-\x);
    \draw[gray] (-\x,0)--(-\x,-2+\x)--(0,-2+\x);}
  	\fill (0,0) circle(3pt);
  	\fill (-2,0) circle(3pt);
  	\fill (0,-2) circle(3pt);
  	\draw (1,-1) node {$\leftrightarrow$};
  	\begin{scope}[shift={(3.5,0)}]
\foreach \x in {0,...,2} {
    \draw[gray] (-\x, 0)--(0,-\x);
    \draw[gray] (-\x,0)--(-\x,-2+\x)--(0,-2+\x);}
  	\fill (0,0) circle(3pt);
  	\fill (-2,0) circle(3pt);
  	\fill (-1,-1) circle(3pt);
  	\fill (0,-1) circle(3pt);
  	\draw (1,-1) node {$\leftrightarrow$};
  	\begin{scope}[shift={(3.5,0)}]
\foreach \x in {0,...,2} {
    \draw[gray] (-\x, 0)--(0,-\x);
    \draw[gray] (-\x,0)--(-\x,-2+\x)--(0,-2+\x);}
  	\fill (-1,0) circle(3pt);
  	\fill (-2,0) circle(3pt);
  	\fill (-1,-1) circle(3pt);
  	\draw (1,-1) node {$\leftrightarrow$};
  	\begin{scope}[shift={(3.5,0)}]
\foreach \x in {0,...,2} {
    \draw[gray] (-\x, 0)--(0,-\x);
    \draw[gray] (-\x,0)--(-\x,-2+\x)--(0,-2+\x);}
  	\end{scope}
  	\end{scope}
  	\end{scope}
  	\end{tikzpicture}\qquad
    \begin{tikzpicture}[x=0.5cm,y=0.5cm]
    \clip (-2.5,-2.3) rectangle (12.5,0.3);
\foreach \x in {0,...,2} {
    \draw[gray] (-\x, 0)--(0,-\x);
    \draw[gray] (-\x,0)--(-\x,-2+\x)--(0,-2+\x);}
  	\fill (-2,0) circle(3pt);
  	\fill (0,-2) circle(3pt);
  	\draw (1,-1) node {$\leftrightarrow$};
  	\begin{scope}[shift={(3.5,0)}]
\foreach \x in {0,...,2} {
    \draw[gray] (-\x, 0)--(0,-\x);
    \draw[gray] (-\x,0)--(-\x,-2+\x)--(0,-2+\x);}
  	\fill (-2,0) circle(3pt);
  	\fill (-1,-1) circle(3pt);
  	\fill (0,-1) circle(3pt);
  	\draw (1,-1) node {$\leftrightarrow$};
  	\begin{scope}[shift={(3.5,0)}]
\foreach \x in {0,...,2} {
    \draw[gray] (-\x, 0)--(0,-\x);
    \draw[gray] (-\x,0)--(-\x,-2+\x)--(0,-2+\x);}
  	\fill (-1,0) circle(3pt);
  	\fill (0,-1) circle(3pt);
  	\draw (1,-1) node {$\leftrightarrow$};
  	\begin{scope}[shift={(3.5,0)}]
\foreach \x in {0,...,2} {
    \draw[gray] (-\x, 0)--(0,-\x);
    \draw[gray] (-\x,0)--(-\x,-2+\x)--(0,-2+\x);}
    \fill (0,0) circle(3pt);
  	\end{scope}
  	\end{scope}
  	\end{scope}
  	\end{tikzpicture}\\[4mm]
    \begin{tikzpicture}[x=0.5cm,y=0.5cm]
    \clip (-2.5,-2.3) rectangle (12.5,0.3);
\foreach \x in {0,...,2} {
    \draw[gray] (-\x, 0)--(0,-\x);
    \draw[gray] (-\x,0)--(-\x,-2+\x)--(0,-2+\x);}
  	\fill (0,0) circle(3pt);
  	\fill (0,-2) circle(3pt);
  	\draw (1,-1) node {$\leftrightarrow$};
  	\begin{scope}[shift={(3.5,0)}]
\foreach \x in {0,...,2} {
    \draw[gray] (-\x, 0)--(0,-\x);
    \draw[gray] (-\x,0)--(-\x,-2+\x)--(0,-2+\x);}
  	\fill (0,-2) circle(3pt);
  	\fill (0,-1) circle(3pt);
  	\fill (-1,0) circle(3pt);
  	\draw (1,-1) node {$\leftrightarrow$};
  	\begin{scope}[shift={(3.5,0)}]
\foreach \x in {0,...,2} {
    \draw[gray] (-\x, 0)--(0,-\x);
    \draw[gray] (-\x,0)--(-\x,-2+\x)--(0,-2+\x);}
  	\fill (-1,0) circle(3pt);
  	\fill (-1,-1) circle(3pt);
  	\draw (1,-1) node {$\leftrightarrow$};
  	\begin{scope}[shift={(3.5,0)}]
\foreach \x in {0,...,2} {
    \draw[gray] (-\x, 0)--(0,-\x);
    \draw[gray] (-\x,0)--(-\x,-2+\x)--(0,-2+\x);}
    \fill (-2,0) circle(3pt);
  	\end{scope}
  	\end{scope}
  	\end{scope}
  	\end{tikzpicture}\qquad
    \begin{tikzpicture}[x=0.5cm,y=0.5cm]
    \clip (-2.5,-2.3) rectangle (12.5,0.3);
\foreach \x in {0,...,2} {
    \draw[gray] (-\x, 0)--(0,-\x);
    \draw[gray] (-\x,0)--(-\x,-2+\x)--(0,-2+\x);}
  	\fill (0,0) circle(3pt);
  	\fill (-2,0) circle(3pt);
  	\draw (1,-1) node {$\leftrightarrow$};
  	\begin{scope}[shift={(3.5,0)}]
\foreach \x in {0,...,2} {
    \draw[gray] (-\x, 0)--(0,-\x);
    \draw[gray] (-\x,0)--(-\x,-2+\x)--(0,-2+\x);}
  	\fill (-2,0) circle(3pt);
  	\fill (-1,0) circle(3pt);
  	\fill (0,-1) circle(3pt);
  	\draw (1,-1) node {$\leftrightarrow$};
  	\begin{scope}[shift={(3.5,0)}]
\foreach \x in {0,...,2} {
    \draw[gray] (-\x, 0)--(0,-\x);
    \draw[gray] (-\x,0)--(-\x,-2+\x)--(0,-2+\x);}
  	\fill (0,-1) circle(3pt);
  	\fill (-1,-1) circle(3pt);
  	\draw (1,-1) node {$\leftrightarrow$};
  	\begin{scope}[shift={(3.5,0)}]
\foreach \x in {0,...,2} {
    \draw[gray] (-\x, 0)--(0,-\x);
    \draw[gray] (-\x,0)--(-\x,-2+\x)--(0,-2+\x);}
    \fill (0,-2) circle(3pt);
  	\end{scope}
  	\end{scope}
  	\end{scope}
  	\end{tikzpicture}
    \caption{The canonical paths used in the proof of the upper bound of Theorem~\ref{th:2k:asymptotics}. The other $28$ paths are obtained by taking the symmetric difference of these four with a configuration with some of the three internal lamps ON. This increases the number of ON lamps in the intermediate configurations no more than in the initial and final configurations of each path.}
    \label{fig:paths}
    
\end{figure}

\subsection{Upper bound via the cycle generating function}
\label{subsec:proba:upper}
In this section we prove the upper bound of Theorem~\ref{th:main}. The main ingredient is Theorem~\ref{th:upper}. As a warm-up, we prove the upper bound of Theorem~\ref{th:finite}, which is an immediate corollary of the following trivial bound (see \cite{Martinelli99}*{Theorem~3.8} for a slightly better one which gives $n^{d-1}$ instead of $n^d$ in Theorem~\ref{th:finite}). We include its proof for completeness.
\begin{lemma}[Finite volume upper bound]
\label{lem:trivial}For any $d\ge 2$, $n\ge 1$, $\beta>0$ and $\tau\in\Omega_{\bbZ^d\setminus(-\Lambda_{d,n})}$, we have
\[\trel^{-\Lambda_{d,n},\tau}\le e^{2(\beta+1)|\Lambda_{d,n+3}|}.\]
\end{lemma}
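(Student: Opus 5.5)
We bound the relaxation time by a path (canonical paths / Cauchy–Schwarz) argument on the finite state space $\Omega_\Lambda$ with $\Lambda=-\Lambda_{d,n}$ and boundary condition $\tau$. The key tool is the standard comparison $\trel\le \max_{\sigma,\sigma'}\frac{L\cdot\ell}{\min_\eta \mu(\eta)\min_{\eta,x}c_x(\eta)}$. Here we connect every pair $\sigma,\sigma'$ by the path flipping the disagreeing sites in a fixed order, so paths have length $\ell\le|\Lambda|$. We write $c_x(\eta)$ for the single-site update rate, which is the $\mu_x$ probability of the target state given the rest.

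\emph{Variance.} Write
\[\var_\Lambda^\tau(f)=\frac12\sum_{\sigma,\sigma'}\mu(\sigma)\mu(\sigma')(f(\sigma)-f(\sigma'))^2.\]
Expand each difference along the path $\sigma=\eta^{(0)},\dots,\eta^{(\ell)}=\sigma'$ and apply Cauchy--Schwarz, giving $(f(\sigma)-f(\sigma'))^2\le \ell\sum_j(f(\eta^{(j)})-f(\eta^{(j-1)}))^2$. Each edge $(\eta,\eta^x)$ appears in at most $2^{|\Lambda|}\cdot 2^{|\Lambda|}$ pairs $(\sigma,\sigma')$; we bound this crudely by $4^{|\Lambda|}$ and do not attempt to exploit structure. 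Then compare $\mu(\sigma)\mu(\sigma')\le 1$ with the Dirichlet form weight $\mu(\eta)\,c_x(\eta)$ on that edge, where $\cD=\sum_x\mu(\var_x f)=\frac12\sum_{\eta,x}\mu(\eta)c_x(\eta)(f(\eta^x)-f(\eta))^2$ up to a factor $1$ or $2$.

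\emph{Energy bounds.} All spins entering $H_\Lambda^\tau$ lie in $\Lambda+T_d\cup(\Lambda-T_d)$, which is contained in a translate of $\Lambda_{d,n+3}$ (more precisely in $\Lambda-T_d+T_d$). Consequently $|H_\Lambda^\tau|\le|\Lambda-T_d|\le|\Lambda_{d,n+3}|$ for every $\sigma$. This gives
\[\mu(\eta)\ge e^{-2\beta|\Lambda_{d,n+3}|}2^{-|\Lambda|}.\]
A single flip changes at most $d+1$ plaquette terms, so $c_x(\eta)\ge (1+e^{2\beta(d+1)})^{-1}\ge e^{-2\beta(d+1)}/2$.

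Combining, we get
\[\trel\le \ell\cdot 4^{|\Lambda|}\cdot 2^{|\Lambda|}e^{2\beta|\Lambda_{d,n+3}|}\cdot 2e^{2\beta(d+1)}.\]
Since $|\Lambda_{d,n+3}|-|\Lambda_{d,n}|\ge d+1$ for $d\ge2$, the factor $e^{2\beta(d+1)}$ is absorbed into the main exponential. The combinatorial factors $|\Lambda|\,8^{|\Lambda|}\cdot 2$ are then absorbed into $e^{2|\Lambda_{d,n+3}|}$, using $8^{|\Lambda|}\cdot 2|\Lambda|\le e^{2|\Lambda|+\dots}$ together with the slack $|\Lambda_{d,n+3}|\ge|\Lambda|+n+\dots$.

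The main obstacle is this constant bookkeeping: checking that $\ln 8<2$ leaves enough room, and that the congestion, rate and path-length factors all fit under $e^{2|\Lambda_{d,n+3}|}$. If the naive counting is too lossy, the fallback is the cruder but cleaner bound $\trel\le 1/(\min_\eta\mu(\eta)\cdot\min c)\cdot|\Omega_\Lambda|\cdot|\Lambda|$, valid for any irreducible reversible chain via path arguments on the hypercube. Since $\ln 2<1$, this leaves ample room.
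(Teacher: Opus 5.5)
Your canonical-path framework is sound, but the bookkeeping step you flag as the main obstacle fails as written: $\ln 8=3\ln 2\approx 2.079>2$, not $<2$. The lemma must hold for every $\beta>0$, so the $\beta$-free factor $2|\Lambda|\,8^{|\Lambda|}$ of your bound has to fit under $e^{2|\Lambda_{d,n+3}|}$ on its own (let $\beta\to0$). That requires $(\ln 8-2)|\Lambda_{d,n}|+\ln(2|\Lambda_{d,n}|)\le 2(|\Lambda_{d,n+3}|-|\Lambda_{d,n}|)$. The left side grows like $n^d$, while the right side grows only like $n^{d-1}$; for $d=2$ it equals $6n+18$. So the inequality is false for large $n$, around $n\approx 150$ when $d=2$.

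There is a second, smaller slip. You already wrote $e^{-2\beta|\Lambda_{d,n+3}|}$ in the lower bound on $\mu$, so $e^{2\beta(d+1)}$ can no longer be absorbed. Keep the exact value $|\Lambda-T_d|=|\Lambda_{d,n+1}|$ there, and then use $|\Lambda_{d,n+3}|-|\Lambda_{d,n+1}|\ge d+1$. Your remark that the relevant spins lie in a translate of $\Lambda_{d,n+3}$ is inaccurate in general, but you never need it.

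The repair is immediate, and your fallback already contains it. The factor $4^{|\Lambda|}$ is pure waste: instead of counting pairs and bounding each $\mu(\sigma)\mu(\sigma')$ by $1$, note that $\sum_{(\sigma,\sigma')\colon \gamma_{\sigma\sigma'}\ni e}\mu(\sigma)\mu(\sigma')\le 1$. Canonical paths then give $\trel\le \ell/\min_{\eta,x}\mu(\eta)c_x(\eta)\le 2|\Lambda|\,2^{|\Lambda|}e^{2\beta(|\Lambda_{d,n+1}|+d+1)}$, which fits with plenty of room. The bit-fixing count $|\Omega_\Lambda|$ in your fallback gives $4^{|\Lambda|}$ instead, and that also fits since $2\ln 2<2$. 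So the proposal does contain a valid proof, just not along the line you present as primary.

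The paper avoids congestion counting altogether. It normalises $f$ so that $\min f=0$ and $\max f=1$, hence $\var(f)\le 1$. It takes a single one-flip-at-a-time path of length $m\le|\Lambda_{d,n}|$ from a minimiser to a maximiser of $f$, and picks a step whose increment is at least $1/m$. It then bounds $\cD(f)$ below by that single edge term, $\mu(\sigma^{(k)})\var_x(f)(\sigma^{(k)})\ge\min\mu/(4m^2e^{2(d+1)\beta})$. This pays $m^2$ instead of $m$ but needs no path counting, and it lands on the same $2^{|\Lambda|}e^{2\beta|\Lambda_{d,n+1}|}$-type bound as the repaired version of your argument. Your route is the more general tool; the paper's is the more elementary one for such a crude estimate.
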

\begin{proof}
    Recalling \eqref{eq:def:trel}, we consider $f\colon\Omega_{-\Lambda_{d,n}}\to\bbR$ and may assume that there exist $\sigma,\sigma'\in\Omega_{-\Lambda_{d,n}}$ with $f(\sigma)=0\le f(\sigma'')\le f(\sigma')=1$ for all $\sigma''\in\Omega_{-\Lambda_{d,n}}$. In particular, $\var_{-\Lambda_{d,n}}^\tau(f)\le 1$. On the other hand, we may fix a sequence $\sigma^{(0)},\dots,\sigma^{(m)}$ such that $\sigma^{(0)}=\sigma$, $\sigma^{(m)}=\sigma'$, $\|\sigma^{(i)}-\sigma^{(i-1)}\|_1=1$ for $i\in\{1,\dots,m\}$ (we change one spin at a time) and $m\le |\Lambda_{d,n}|$. Then there exists $k\in\{1,\dots,m\}$ such that $f(\sigma^{(k)})-f(\sigma^{(k-1)})\ge 1/m$. Write $\sigma^{(k)}-\sigma^{(k-1)}=2\1_x$, with $\1_x$ the characteristic function of $x\in-\Lambda_{d,n}$. Then
    \begin{align*}
    \cD^{\tau}_{-\Lambda_{d,n}}(f)&{}\overset{\eqref{eq:def:D}}\ge \mu_{-\Lambda_{d,n}}^\tau\left(\sigma^{(k)}\right)\var_x(f)\left(\sigma^{(k)}\right)\ge \mu_{-\Lambda_{d,n}}^\tau\left(\sigma^{(k)}\right)\cdot\frac{\min(\mu^\tau_{-\Lambda_{d,n}}(\sigma^{(k)}),\mu^\tau_{-\Lambda_{d,n}}(\sigma^{(k-1)}))}{2m^2(\mu^\tau_{-\Lambda_{d,n}}(\sigma^{(k)})+\mu^\tau_{-\Lambda_{d,n}}(\sigma^{(k-1)}))}
    \\
    &{}\overset{\eqref{eq:def:H}}\ge\frac{\mu^\tau_{-\Lambda_{d,n}}(\sigma^{(k)})}{2m^2(1+e^{2(d+1)\beta})}\ge \frac{\min\{\mu^\tau_{-\Lambda_{d,n}}(\bar\sigma)\mid\bar \sigma\in\Omega_{-\Lambda_{d,n}}\}}{4|\Lambda_{d,n}|^2e^{2(d+1)\beta}}
    \\
    &{}
    \overset{\eqref{eq:def:H}}\ge\frac{\min_{\bar\sigma}e^{-\beta H^\tau_{-\Lambda_{d,n}}(\bar\sigma)}}{|\Omega_{-\Lambda_{d,n}}|\max_{\bar\sigma} e^{-\beta H^\tau_{-\Lambda_{d,n}}(\bar\sigma)}}\cdot\frac{1}{4|\Lambda_{d,n}|^2e^{2(d+1)\beta}}\overset{\eqref{eq:def:H}}\ge\frac{e^{-2\beta|-\Lambda_{d,n}-T_d|}}{|\Omega_{-\Lambda_{d,n}}|\cdot |\Lambda_{d,n}|^2\cdot4e^{2(d+1)\beta}}\\
    &{}\ge \frac{e^{-(2\beta+2)|\Lambda_{d,n+1}|}}{4e^{2(d+1)\beta}}
    \end{align*}
    using $n\ge 1,d\ge 2$ in the last inequality. Plugging this into \eqref{eq:def:trel} and using $n\ge 1,d\ge 2$ again, we obtain the desired inequality.
\end{proof}

The upper bound of Theorem~\ref{th:main} will use Lemma~\ref{lem:trivial} at a suitably chosen scale of order $e^{C\beta}$ with large $C$. Above this scale we will be able to apply Theorem~\ref{th:upper} to deduce that boundary conditions have little influence on the partition function and so also on the Boltzmann measure. This will allow us to use the path coupling method in finite volume and then transfer fast convergence to equilibrium to infinite volume.

\begin{proof}[Proof of the upper bound of Theorem~\ref{th:main}]
The starting point is the same high temperature expansion as in \cite{Chleboun17}*{Section~4.0.1}, which we recall for completeness. Fix a positive integer $d$, $\beta>0$ and set $\theta=\tanh(\beta)$. For any $u\in\{-1,1\}$, we have
\[e^{\beta u}=\cosh(\beta)(1+u\theta).\]
Consequently, for any finite $\Lambda\subset\bbZ^d$ and $\tau\in\Omega_{\bbZ^d\setminus\Lambda}$, we have
\begin{align*}
Z^\tau_\Lambda&{}=\sum_{\sigma\in\Omega_\Lambda}\prod_{x\in\Lambda-T_d}e^{\beta[\sigma\cdot\tau]_{x+T_d}}=(\cosh(\beta))^{|\Lambda-T_d|}\sum_{X\subset\Lambda-T_d}\theta^{|X|}\sum_{\sigma\in\Omega_\Lambda}\prod_{x\in X}[\sigma\cdot\tau]_{x+T_d}\\
&{}=(\cosh(\beta))^{|\Lambda-T_d|}\sum_{X\in\cC_{(-T_d,\Lambda-T_d)}}\theta^{|X|}\sum_{\sigma\in\Omega_\Lambda}\prod_{x\in X}[\sigma\cdot\tau]_{x+T_d},
\end{align*}

recalling the set of $(A,B)$-cycles $\cC_{A,B}$ from Section~\ref{subsubsec:cycles}. We set $\cZ_\Lambda^\tau=Z_\Lambda^\tau2^{-|\Lambda|}(\cosh(\beta))^{-|\Lambda-T_d|}$ to obtain
\[\max_{\tau\in\Omega_{\bbZ^d\setminus\Lambda}}\left|\cZ_\Lambda^\tau-1\right|\le \sum_{X\in\cC_{(-T_d,\Lambda-T_d)}\setminus\{\varnothing\}}\theta^{|X|}.\]

Recalling \eqref{eq:def:Lambda:dn} and \eqref{eq:def:fdn}, we get
\[\max_{\tau\in\Omega_{\bbZ^d\setminus (-\Lambda_{d,n})}}\big|\cZ^\tau_{-\Lambda_{d,n}}-1\big|\le G_{d,n+1}(\theta)-1\]
for any positive integer $n$. 

The rest of the proof diverges from \cite{Chleboun17}. Set $m=\lceil1/(1-\theta)\rceil\ge 2$ and fix a positive integer $a_d\ge d^2$ such that Theorem~\ref{th:upper} holds and
\begin{equation}
\label{eq:nNconditions}
    2^{a_d-d^2}\ge 4(6d)^{d(d+1)}
\end{equation}
(the latter is clearly true for $a_d$ large enough). Then Theorem~\ref{th:upper} gives
\begin{equation}
\label{eq:Z:close:to:1}
n\ge m^{a_d}\Longrightarrow \max_{\tau\in\Omega_{\bbZ^d\setminus(-\Lambda_{d,n})}}\left|\cZ^\tau_{-\Lambda_{d,n}}-1\right|\le n^{-a_d}.\end{equation}
This result will combine with the following observation.

\begin{lemma}[Boundary condition influence]
\label{lem:measures}
Fix finite $\Lambda'\subset\Lambda\subset\bbZ^d$ and $x\in\bbZ^d\setminus\Lambda$ such that $\min_{y\in\Lambda\setminus\Lambda'}\|x-y\|_1\ge 3$. For $\tau\in\Omega_{\bbZ^d\setminus\Lambda}$, define the marginal of $\mu^\tau_\Lambda$ on $\Lambda\setminus\Lambda'$ by
\[\mu^\tau_{\Lambda\setminus\Lambda'}\colon\Omega_{\Lambda\setminus\Lambda'}\to\bbR\colon\sigma\mapsto \sum_{\eta\in\Omega_{\Lambda'}}\mu^\tau_\Lambda(\eta\cdot\sigma).\]
Then, for any $\tau\in\Omega_{\Lambda\setminus\Lambda'}$, the marginal's Radon--Nikod\'ym derivative is uniformly bounded by
\[\frac{\mathrm d\mu^\tau_{\Lambda\setminus\Lambda'}}{\mathrm d\mu^{\tau^x}_{\Lambda\setminus\Lambda'}}\ge \frac{\cZ^{\tau^x}_\Lambda}{\cZ^\tau_\Lambda}\min_{\sigma\in\Omega_{\bbZ^d\setminus\Lambda'}}\frac{\cZ^{\sigma^x}_{\Lambda'}}{\cZ^{\sigma}_{\Lambda'}}.\]
\end{lemma}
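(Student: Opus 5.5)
We compute the marginal directly. For $\sigma\in\Omega_{\Lambda\setminus\Lambda'}$,
\[\mu^\tau_{\Lambda\setminus\Lambda'}(\sigma)=\frac{1}{Z^\tau_\Lambda}\sum_{\eta\in\Omega_{\Lambda'}}e^{-\beta H^\tau_\Lambda(\eta\cdot\sigma)}.\]
The plan is to split the Hamiltonian into two parts. The first consists of plaquettes meeting $\Lambda'$, which together with the rest gives $H^{\sigma\cdot\tau}_{\Lambda'}(\eta)$. The second consists of plaquettes $x'+T_d$ with $x'\in\Lambda-T_d$ not meeting $\Lambda'$; call their contribution $E^\tau(\sigma)$, which does not depend on $\eta$. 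This split gives
\[\mu^\tau_{\Lambda\setminus\Lambda'}(\sigma)=\frac{e^{-\beta E^\tau(\sigma)}Z^{\sigma\cdot\tau}_{\Lambda'}}{Z^\tau_\Lambda}.\]
The same formula holds with $\tau^x$ in place of $\tau$. (The flip $\tau^x$ only makes sense for $x\notin\Lambda$, and the boundary condition is really in $\Omega_{\bbZ^d\setminus\Lambda}$; I will read the statement that way.)

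Next I compare the two energies. Flipping $\tau$ at $x$ only affects plaquettes containing $x$, i.e.\ $x'+T_d$ with $x'\in x-T_d$. Such a plaquette lies within $\ell^1$-distance $2$ of $x$. Since every point of $\Lambda\setminus\Lambda'$ is at distance at least $3$ from $x$, no such plaquette meets $\Lambda\setminus\Lambda'$. Therefore each affected plaquette either meets $\Lambda'$, so that it belongs to the $\Lambda'$-Hamiltonian, or meets neither $\Lambda$ part. In the latter case it is entirely in the boundary and is not counted in $H_\Lambda$ at all; the sum is over $x'\in\Lambda-T_d$, i.e.\ plaquettes meeting $\Lambda$. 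Hence $E^{\tau^x}(\sigma)=E^\tau(\sigma)$. The main point to check carefully is exactly this bookkeeping of which plaquettes touch $x$, and that the distance $3$ suffices, given that plaquette diameter is at most $2$ in $\ell^1$.

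It then follows that
\[\frac{\mu^\tau_{\Lambda\setminus\Lambda'}(\sigma)}{\mu^{\tau^x}_{\Lambda\setminus\Lambda'}(\sigma)}=\frac{Z^{\tau^x}_\Lambda}{Z^\tau_\Lambda}\cdot\frac{Z^{\sigma\cdot\tau}_{\Lambda'}}{Z^{(\sigma\cdot\tau)^x}_{\Lambda'}}.\]
The normalisations converting $Z$ to $\cZ$ are the factors $2^{-|\Lambda|}\cosh(\beta)^{-|\Lambda-T_d|}$, which depend only on the domain and not on the boundary condition. They therefore cancel in both ratios, so each $Z$-ratio equals the corresponding $\cZ$-ratio.

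Finally, I bound the second ratio below by the minimum over boundary conditions $\sigma'\in\Omega_{\bbZ^d\setminus\Lambda'}$ of $\cZ^{\sigma'}_{\Lambda'}/\cZ^{\sigma'^x}_{\Lambda'}$. Since $x\mapsto$ flip is an involution on $\Omega_{\bbZ^d\setminus\Lambda'}$, substituting $\sigma''=\sigma'^x$ turns this into $\min_{\sigma''}\cZ^{\sigma''^x}_{\Lambda'}/\cZ^{\sigma''}_{\Lambda'}$, which is the claimed bound.
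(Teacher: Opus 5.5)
Your proof is correct and follows the paper's argument essentially verbatim. The paper also splits $H^\tau_\Lambda$ into $H^{\tau\cdot\sigma}_{\Lambda'}(\eta)$ plus the sum over $y\in(\Lambda-T_d)\setminus(\Lambda'-T_d)$, notes that this remainder depends neither on $\eta$ nor on $\tau_x$, and obtains the same exact ratio formula before passing to $\cZ$. Your explicit distance-$3$ bookkeeping, your reading of the boundary condition as $\tau\in\Omega_{\bbZ^d\setminus\Lambda}$ (which is how the paper's proof reads it), and the final involution step for the minimum are all correct; the paper leaves these details implicit.
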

\begin{proof}
Fix $\tau\in\Omega_{\bbZ^d\setminus\Lambda}$. For $\sigma\in\Omega_{\Lambda\setminus\Lambda'}$ and $\eta'\in\Omega_{\Lambda'}$, we have
\begin{align*}
\mu^\tau_{\Lambda\setminus\Lambda'}(\sigma)&{}=\sum_{\eta\in\Omega_{\Lambda'}}\mu^\tau_\Lambda(\eta\cdot\sigma)=\sum_{\eta\in\Omega_{\Lambda'}}\frac{e^{-\beta H_\Lambda^\tau(\eta\cdot\sigma)}}{Z^\tau_\Lambda}= \frac{1}{Z^{\tau}_\Lambda}\sum_{\eta\in\Omega_{\Lambda'}}e^{-\beta H_{\Lambda'}^{\tau\cdot\sigma}(\eta)}e^{\beta\sum_{y\in(\Lambda-T_d)\setminus(\Lambda'-T_d)}[\tau\cdot\sigma\cdot\eta]_{y+T_d}}\\
&{}=\frac{Z_{\Lambda'}^{\tau\cdot\sigma}}{Z^{\tau}_\Lambda}e^{\beta\sum_{y\in(\Lambda-T_d)\setminus(\Lambda'-T_d)}[\tau^x\cdot\sigma\cdot \eta']_{y+T_d}}=\frac{Z^{\tau\cdot\sigma}_{\Lambda'}Z^{\tau^x}_\Lambda}{Z^\tau_\Lambda Z^{\tau^x\cdot\sigma}_{\Lambda'}}\mu^{\tau^x}_{\Lambda\setminus\Lambda'}(\sigma)=\frac{\cZ^{\tau\cdot\sigma}_{\Lambda'}\cZ^{\tau^x}_\Lambda}{\cZ^{\tau^x\cdot\sigma}_{\Lambda'}\cZ^\tau_\Lambda}\mu^{\tau^x}_{\Lambda\setminus\Lambda'}(\sigma),\end{align*}
where we used the fact that $\sum_{y\in(\Lambda-T_d)\setminus(\Lambda'-T_d)}[\tau\cdot\sigma\cdot\eta]$ does not depend on $\tau_x$ nor on $\eta$ because of the range of $y$ and the assumption on $x$.
\end{proof}

Fix \begin{align}
\label{eq:def:N}
n&{}=m^{a_d},&N&{}=(3d)^{d+1}(n+1)^d,
\end{align}
so that 
\begin{align}
\label{eq:nNconditions2}
    N^d/(3d^d)&{}\ge 3^dN^{d-1}d(n+1)^d\ge (n+1)^d(d+1)(N+2)^{d-1}\\
\nonumber&{}\ge (N+2)^{2d-1}(d+1)N^{-d}\ge (N+2)^{2d-1}(d+1)(6d)^{-(d+1)d}n^{-d^2}\\
\label{eq:nNconditions3}&{}\ge (N+1)^d(N+2)^{d-1}(d+1)4n^{-a_d},
\end{align}
thanks to \eqref{eq:nNconditions} in the last inequality. Let us consider the following block dynamics on a $d$-dimensional torus $\bbT=\bbT_L$ of side length $L>N+1$ which we first describe via its graphical representation. Endow each $x\in\bbT$ with a Poisson clock of unit rate which rings at times $P_x\subset(0,\infty)$. When the clock at $x$ rings, change the current configuration $\sigma$ to $\tau\cdot\sigma'$ with $\tau=\sigma_{\bbT\setminus (x-\Lambda_{d,N})}$ and $\sigma'$ chosen at random according to $\mu_{x-\Lambda_{d,N}}^{\tau}$; that is, we fully resample the configuration in a simplex region conditionally on the configuration outside of it. In other words, this is the Markov process with generator and Dirichlet form given by
\begin{align*}\cL_\bbT f&{}=\sum_{x\in\bbT}\left(\mu_{x-\Lambda_{d,N}}(f)-f\right),&\cD_\bbT(f)&{}=\sum_{x\in\bbT}\mu_\bbT\left(\var_{x-\Lambda_{d,N}}(f)\right).\end{align*}

\begin{figure}
    \centering
    \begin{tikzpicture}[x=-0.5cm,y=-0.5cm]
\foreach \x in {0,...,8} {
    \draw[gray] (\x, 0)--(0,\x);
    \draw[gray] (\x,0)--(\x,8-\x)--(0,8-\x);}
\foreach \x in {0,...,9} {
  	\fill (\x,-1) circle(3pt);
  	\fill (-1,\x) circle(3pt);
  	\fill (\x,9-\x) circle(3pt);}
  	\draw (0,0) node[yshift=0.15cm]{$y$};
  	\draw (6,-1) node[above]{$x$};
  	\draw[very thick,red] (5,0)node[black,yshift=0.15cm]{$z$}--(8,0)--(5,3)--cycle;
  	\end{tikzpicture}
  	\caption{Illustration of the contracting coupling in the proof of the upper bound of Theorem~\ref{th:main}. The simplex depicted is $y-\Lambda_{d,N}$ and the elements of $\partial\Lambda$ are drawn as dots. The simplex $z-\Lambda_{d,n}$ is thickened in red for one possible choice of $z$, given $x$ and $y$. In the figure, we take $d=2$, $N=8$, $n=3$, but bear in mind that  by \eqref{eq:def:N} in reality $n$ is much smaller than $N$.}
    \label{fig:block:dynamics}
\end{figure}

We next resort to the path coupling method (see \cite{Levin09}*{Chapter~14}). It is used to show that a Markov chain (the block dynamics we just defined) converges to its invariant measure exponentially fast. To be able to apply it, we need to couple two copies of the chain with different initial conditions in such a way that their $\ell^1$-distance (number of discrepancies) is contracted in expectation. By a well-known reduction, it suffices to treat initial conditions that only differ at a single site.

Consider a configuration $\sigma\in\Omega_\bbT$ and $x\in\bbT$. Recall that $\sigma^x$ is the configuration that only differs from $\sigma$ at $x$. We construct a coupling between the Markov process $(\sigma(t))_{t\ge 0}$ with initial state $\sigma$ and the one with initial state $\sigma^x$ denoted by $(\sigma'(t))_{t\ge 0}$ as follows. Let $(\xi_{y,t})_{y\in\bbT,t\in P_y}$ be a sequence of i.i.d.\ Bernoulli random variables with parameter $1-4n^{-a_d}$. We use the same Poisson clocks $(P_y)_{y\in\bbT}$ for both processes. Both processes remain constant outside the clock ring times $\bigcup_{y\in \bbT}P_y$. Assume the coupling is constructed until the clock ring time $t\in P_y$ for some $y\in\bbT$. Set $\partial\Lambda=y+((-\Lambda_{d,N}+(-T_d)+T_d)\setminus(-\Lambda_{d,N}))$, $\tau=\sigma_{\bbT\setminus(y-\Lambda_{d,N})}(t-)$, $\tau'=\sigma_{\bbT\setminus(y-\Lambda_{d,N})}'(t-)$. If $x\in\partial\Lambda$, fix $z\in\bbT$ so that $z-\Lambda_{d,n}\subset y-\Lambda_{d,N}$ and $x$ is at $\ell^1$-distance at least 3 from $(y-\Lambda_{d,N})\setminus(z-\Lambda_{d,n})$,
see Figure~\ref{fig:block:dynamics}. We construct $\sigma(t)$ and $\sigma'(t)$ by the following rules.
\begin{enumerate}
    \item If $x\in \bbT\setminus(y-\Lambda_{d,N}+(-T_d)+T_d)$ and $\tau^x=\tau'$, we define $\sigma(t)=\tau\cdot\eta$ and $\sigma'(t)=\tau'\cdot\eta$ with $\eta$ distributed according to $\mu^\tau_{y-\Lambda_{d,N}}=\mu^{\tau'}_{y-\Lambda_{d,N}}$. That is, if $x$ is far from the update block, it does not influence the update.
    \item If $x\in y-\Lambda_{d,N}$ and $\tau=\tau'$, we define $\sigma(t)=\sigma'(t)=\tau\cdot\eta$ with $\eta$ distributed according to $\mu^\tau_{y-\Lambda_{d,N}}=\mu^{\tau'}_{y-\Lambda_{d,N}}$. That is, if $x$ is inside the block and the boundary conditions coincide, the two processes coalesce.
    \item If $\tau'=\tau^x$, $x\in\partial\Lambda$ and $\xi_{y,t}=1$, we define $\sigma(t)=\tau\cdot\kappa\cdot\eta$ and $\sigma'(t)=\tau'\cdot\kappa\cdot\eta'$ with $\kappa$ distributed according to $\mu^\tau_{(y-\Lambda_{d,N})\setminus(z-\Lambda_{d,n})}$ and, conditionally on $\kappa$, $(\eta,\eta')$ distributed according to $\mu^{\tau\cdot\kappa}_{z-\Lambda_{d,n}}\otimes\mu^{\tau'\cdot\kappa}_{z-\Lambda_{d,n}}$. That is, if $x$ is on the boundary of the block, the boundary conditions coincide except at $x$, and we are lucky, we ensure that the two processes coincide outside the small simplex close to $x$.
    \item If $\tau'=\tau^x$, $x\in\partial\Lambda$ and $\xi_{y,t}=0$, we define $\sigma(t)=\tau\cdot\eta$ and $\sigma'(t)=\tau'\cdot\eta'$ with $\eta$ and $\eta'$ independent with distributions such that the total marginals with the previous case are $\mu^\tau_{y-\Lambda_{d,N}}$ and $\mu^{\tau'}_{y-\Lambda_{d,N}}$. That is, if $x$ is on the boundary of the block, the boundary conditions coincide except at $x$, and we are unlucky, we perform the update somehow.
    \item Otherwise, we define $\sigma(t)=\tau\cdot\eta$ and $\sigma'(t)=\tau'\cdot\eta'$ with $(\eta,\eta')$ with law $\mu^\tau_{y-\Lambda_{d,N}}\otimes\mu^{\tau'}_{y-\Lambda_{d,N}}$. That is, in all other cases, the two processes perform the block update independently.
\end{enumerate}
We note that this construction is possible, since Lemma~\ref{lem:measures} and \eqref{eq:Z:close:to:1} give
\begin{align*}\frac{\mathrm d\mu_{(y-\Lambda_{d,N})\setminus(z-\Lambda_{d,n})}^{\tau^x}}{\mathrm d\mu_{(y-\Lambda_{d,N})\setminus(z-\Lambda_{d,n})}^{\tau}}&{}\ge \frac{1-N^{-a_d}}{1+N^{-a_d}}\cdot\frac{1-n^{-a_d}}{1+n^{-a_d}}\overset{N\ge n}\ge\left(\frac{1-n^{-a_d}}{1+n^{-a_d}}\right)^2\\
&{}\ge(1-n^{-a_d})^4\ge  1-4n^{-a_d}{=\mathbb P(\xi_{y,t}=1)}.\end{align*}
We next claim that this coupling is contracting for the natural graph metric on $\Omega_\bbT$.
\begin{lemma}[Contraction]
\label{lem:contraction}
Under the above coupling, setting $\phi\colon[0,\infty]\colon t\mapsto \bbE\|\sigma(t)-\sigma'(t)\|_1/2$, we have
\[\phi'(0)\le -\frac{N^d}{3d^d}\phi(0).\]
\end{lemma}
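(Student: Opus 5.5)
Initially $\sigma(0)$ and $\sigma'(0)$ differ only at $x$, so $\phi(0)=1$. The plan is to compute $\phi'(0)$ as the sum, over $y\in\bbT$, of the expected change in the number of discrepancies caused by a ring at $y$, each ring having rate one. I will split the sum according to the position of $x$ relative to the block $y-\Lambda_{d,N}$, following the five rules of the coupling.

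\emph{Far sites.} If $x\notin y-\Lambda_{d,N}+(-T_d)+T_d$, rule 1 applies. Both processes put the same $\eta$ inside the block, so the discrepancy stays at $x$ and the contribution is $0$.

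\emph{Sites inside the block.} If $x\in y-\Lambda_{d,N}$, then the boundary conditions agree and rule 2 applies. The two processes coalesce, and the change is $-1$. There are $|\Lambda_{d,N}|=\binom{N+d}{d}\ge N^d/d!$ such $y$, giving a negative contribution of at least $N^d/d^d$ in absolute value.

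\emph{Boundary sites.} Next, take $x\in\partial\Lambda$. The number of such $y$ is at most $|\partial\Lambda|$, which is of order $(d+1)(N+2)^{d-1}$ (the faces of a simplex of side $N+2$). With probability $1-4n^{-a_d}$ we are in rule 3. There $\kappa$ is shared, so after the update discrepancies can only lie at $x$ and in $z-\Lambda_{d,n}$. The number of new discrepancies is therefore at most $|\Lambda_{d,n}|\le (n+1)^d$. With probability $4n^{-a_d}$ we are in rule 4, and the number of new discrepancies is at most $|\Lambda_{d,N}|\le (N+1)^d$. So the total positive contribution is at most
\[(d+1)(N+2)^{d-1}\left((n+1)^d+4n^{-a_d}(N+1)^d\right).\]

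\emph{Closing the estimate.} Rule 5 never occurs at time $0$, because $\tau'$ and $\tau$ always differ only possibly at $x$. Thus
\[\phi'(0)\le -\frac{N^d}{d^d}+(d+1)(N+2)^{d-1}(n+1)^d+4n^{-a_d}(d+1)(N+1)^d(N+2)^{d-1}.\]
By \eqref{eq:nNconditions2} the second term is at most $N^d/(3d^d)$, and by \eqref{eq:nNconditions3} so is the third. This gives $\phi'(0)\le -N^d/(3d^d)=-\frac{N^d}{3d^d}\phi(0)$.

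The main obstacle is justifying, in rule 3, that only sites of $z-\Lambda_{d,n}$ (plus $x$) can be discrepant. This requires that the coupling's marginals are correct, which is where Lemma~\ref{lem:measures} and \eqref{eq:Z:close:to:1} enter. It also requires counting the boundary $\partial\Lambda$ carefully, including the sites adjacent to the block through plaquettes, since these determine the $(d+1)(N+2)^{d-1}$ factor.
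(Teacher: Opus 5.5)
Your proposal is correct and follows the paper's proof essentially verbatim. You use the same case split over the first four coupling rules, with discrepancy changes $0$, $-1$, at most $|\Lambda_{d,n}|$ and at most $|\Lambda_{d,N}|$; the same bounds $|\Lambda_{d,N}|\ge (N/d)^d$ and $|\partial\Lambda|\le (d+1)(N+2)^{d-1}$; the same observation that rule 5 cannot occur at the first update; and the same closing via \eqref{eq:nNconditions2} and \eqref{eq:nNconditions3}.

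One small correction to your last paragraph. In rule 3, discrepancies are confined to $\{x\}\cup(z-\Lambda_{d,n})$ directly by construction, since the two updates are $\tau\cdot\kappa\cdot\eta$ and $\tau'\cdot\kappa\cdot\eta'$ with $\kappa$ shared. Lemma~\ref{lem:measures} and \eqref{eq:Z:close:to:1} are only needed for the coupling to exist, and the paper checks this before the lemma.
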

\begin{proof}
By construction, $\phi(0)=1$, since $(\sigma(0))^x=\sigma'(0)$. We consider each of the first four cases in the coupling separately (the fifth one requires two updates to occur, so it does not feature in the derivative at 0). Notice that, in these cases, $\phi$ increases by $0$; $-1$; at most $|\Lambda_{d,n}|$; at most $|\Lambda_{d,N}|$, respectively. Hence, taking into account the rate at which each updates of each type occur, we get
\begin{align*}
&\phi'(0)\\
&{}\le 0\cdot (|\bbT|-|\Lambda_{d,N}+(-T_d)+T_d|) - 1 \cdot|\Lambda_{d,N}|+|\Lambda_{d,n}|\cdot|\partial\Lambda|\cdot(1-4n^{-a_d})+|\Lambda_{d,N}|\cdot|\partial\Lambda|\cdot 4n^{-a_d}\\
&{}\le-(N/d)^d+(n+1)^d\cdot (d+1)(N+2)^{d-1}+(N+1)^d\cdot (d+1)(N+2)^{d-1}\cdot 4n^{-a_d}\\
&{}\le -N^d/(3d^d),
\end{align*}
recalling \eqref{eq:nNconditions2} and \eqref{eq:nNconditions3} in the last inequality.
\end{proof}

By the standard technique of path coupling (see \cite{Levin09}*{Theorem~14.6}, which applies similarly to the continuous time setting), Lemma~\ref{lem:contraction} implies that, for any probability measure $\nu$ on $\Omega_\bbT$ and $t\ge 0$
\begin{equation}
\label{eq:dTV}
\dtv\left(\nu P^\bbT_t,\mu_\bbT\right)\le W_1\left(\nu P_t^\bbT,\mu_\bbT\right)\le e^{-tN^d/(3d^d)}|\bbT|,\end{equation}
where $P_t^\bbT$ denotes the semi-group of $\cL_\bbT$ and $2W_1$ is the $\ell_1$-Wasserstein distance and $\dtv$ is the total variation distance.

We next seek to compare the invariant measure $\mu_\bbT$ on the torus to the semi-group $P^{(N)}_t$ of the block dynamics on $\bbZ^d$, whose Dirichlet form is given by
\begin{equation}
\label{eq:def:DN}
\cD^{(N)}(f)=\sum_{x\in\bbZ^d}\mu\left(\var_{x-\Lambda_{d,N}}(f)\right)\end{equation}
and whose relaxation time we denote by $\trel^{(N)}$ (recall \eqref{eq:def:trel}). This follows from the classical fact that, in a finite-range model, information cannot travel too fast, as we recall next.
\begin{lemma}[Exponential ergodicity]
\label{lem:decay}
Let $a$ be a positive integer, $f$ be a local function with $\supp f \subset[-a,a)^d$ and $\sup f-\inf f\le 1$. Then, for any $t\ge a$ large enough depending on $d$, if the size $L$ of $\bbT=\bbT_L$ satisfies $L/t\in[(5N)^{5d},(6N)^{5d}]$, we have
\[\left\|P_t^{(N)}f-\mu_\bbT(f)\right\|_\infty\le e^{-tN^d/(4d^d)}.\]
\end{lemma}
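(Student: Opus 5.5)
The idea is to compare the infinite-volume block dynamics $P^{(N)}_t$ with the torus block dynamics $P^{\bbT}_t$ through a finite speed of propagation argument, and then to use \eqref{eq:dTV} on the torus. Fix $f$ with $\supp f\subset[-a,a)^d$ and $\sup f-\inf f\le 1$; we may assume $f$ takes values in $[0,1]$. Identify the box $[-L/2,L/2)^d$ with $\bbT=\bbT_L$. Both processes are driven by the same Poisson clocks, indexed by sites of $\bbZ^d$ and of $\bbT$ respectively, and by the same resampling randomness, realised for instance as a uniform variable attached to each ring. We couple them so that the two processes use identical clock rings and identical update randomness at every site of the box, and we start both from the same configuration $\sigma$ restricted to the box.

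\emph{Step 1 (light cone).} Each block update at $y$ reads the configuration on $y-\Lambda_{d,N}+(-T_d)+T_d$, a set of diameter at most $N+2$, and writes to $y-\Lambda_{d,N}$. By the standard disagreement percolation argument, a discrepancy between the two processes can reach $[-a,a)^d$ by time $t$ only along a sequence of clock rings $t_1<\dots<t_j\le t$ at sites $y_1,\dots,y_j$ with consecutive sites within distance $2N+4$. Such a sequence has to start from the wrap-around region at distance about $L/2$ from the origin, so $j\ge L/(4(2N+4)) - a$. The expected number of such sequences is at most $(C_dN^d)^j t^j/j!\le (eC_dN^dt/j)^j$. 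Since $L/t\ge(5N)^{5d}$ and $t\ge a$, we have $j\ge t(5N)^{5d}/(20N)$, hence $eC_dN^dt/j\le e^{-1}$ for $t$ large depending on $d$. This gives
\[
\bigl|P^{(N)}_tf(\sigma)-P^{\bbT}_tf(\sigma_{\bbT})\bigr|\le e^{-j}\le e^{-t(5N)^{5d}/(40N)},
\]
which is far smaller than $e^{-tN^d/(4d^d)}$.

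\emph{Step 2 (torus mixing).} Applying \eqref{eq:dTV} with $\nu=\delta_{\sigma_\bbT}$ gives
\[
\bigl|P^\bbT_tf(\sigma_\bbT)-\mu_\bbT(f)\bigr|\le e^{-tN^d/(3d^d)}L^d\le e^{-tN^d/(3d^d)}\bigl((6N)^{5d}t\bigr)^d .
\]
For $t$ large depending on $d$ (and using $N\ge1$), the polynomial factor is absorbed, giving at most $\frac12 e^{-tN^d/(4d^d)}$. The margin is available because $\frac13-\frac14=\frac1{12}$, and we need $e^{tN^d/(12d^d)}\ge 2(6N)^{5d^2}t^d$. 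This requires $t$ large relative to $\ln N$, which is harmless since $tN^d/d^d$ grows faster than $\ln N$ once $t$ exceeds a constant depending on $d$. Combining with Step 1 yields the claim, where $\mu_\bbT(f)$ is understood with $f$ viewed on the torus via the identification, legitimate because $a\le t\ll L$.

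The main obstacle is Step 1. The block updates have range of order $N$, not $1$, so the path counting must account for the block size in both the number of neighbouring blocks ($\sim N^d$) and the jump length ($\sim N$). The scaling $L/t\ge(5N)^{5d}$ is exactly what makes the combinatorial factor $N^{d}\cdot N/L\cdot t$ small. I would also check that the upper constraint $L/t\le(6N)^{5d}$ is only used in Step 2 to control $|\bbT|$.
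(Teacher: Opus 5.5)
Your proposal is correct and follows essentially the same route as the paper. You couple the torus and $\bbZ^d$ block dynamics through shared clocks, show that a discrepancy can reach $\supp f$ only along a time-ordered chain of rings of length of order $L/N$ starting at the edge of the box (exponentially unlikely since $L/t\ge(5N)^{5d}$), and then apply \eqref{eq:dTV}, absorbing $L^d\le((6N)^{5d}t)^d$ into the gap between $\frac{1}{3d^d}$ and $\frac{1}{4d^d}$.

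The differences are only bookkeeping. The paper bounds each fixed chain with Bennett's inequality for a single Poisson variable and anchors chains at $\supp f$, which gives a $(2a)^d$ prefactor. You instead use the expected number $t^j/j!$ of ordered ring tuples. You should state explicitly the polynomial number of possible starting sites (at most $L^d$, or of order $(2a+2N)^d$ if you anchor at $\supp f$); your $e^{-j}$ bound absorbs this factor once $t$ is large.
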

\begin{proof}
We define $f_\bbT\colon \Omega_\bbT\to \bbR:\sigma^{\bbT}\mapsto f(\sigma)$, where $\sigma$ is any configuration in $\Omega$ such that $\sigma_x=\sigma^\bbT_x$ for $x\in[-L/2,L/2)^d\cap\bbZ^d\equiv\bbT$(we assume $L$ divisible by 2 for simplicity).

Consider a coupling of the block dynamics $(\sigma^\bbT(t))_{t\ge 0}$ on $\bbT$ and the one on $\bbZ^d$ denoted $(\sigma(t))_{t\ge 0}$ with consistent initial conditions $\sigma^\bbT(0)=\sigma(0)_{[-L/2,L/2)^d\cap\bbZ^d}$, constructed as follows. Let $(P_x)_{x\in\bbZ^d}$ be Poisson processes on $[0,\infty)$ with unit intensity. At times $t\in P_x$ for $x\in\bbT\equiv[-L/2,L/2)^d\cap\bbZ^d$, we update both $\sigma^\bbT(t-)$ and $\sigma(t-)$ to the same state in the block $x-\Lambda_{d,N}$, if the two processes have the same boundary condition on the boundary of this simplex (in the sense of Figure~\ref{fig:block:dynamics}). If the boundary conditions do not match, the updates are performed independently. The updates at sites in $\bbZ^d\setminus [-L/2,L/2)^d$ are performed only in $\sigma(t)$, governed by independent Poisson processes.

Clearly, if $f(\sigma(t))\neq f_\bbT(\sigma^\bbT(t))$ for some $t\ge 0$, then there exists a sequence $(x_i)_{i=0}^m$ of points in $[-L/2,L/2)^d\cap\bbZ^d$ and decreasing sequence of times $(t_i)_{i=0}^m$ such that: $t_0\le t$; $\supp f\cap(x_0-\Lambda_{d,N})\neq \varnothing$; $x_m-\Lambda_{d,N}+\Lambda_{d,N}-T_d+T_d\not \subset [-L/2,L/2)^d$; for all $i\in\{1,\dots,m\}$ we have $(x_{i-1}-\Lambda_{d,N}-T_d+T_d)\cap(x_i-\Lambda_{d,N})\neq\varnothing$; for all $i\in\{0,\dots,m\}$ we have $t_i\in P_{x_i}$. Note that any such sequence satisfies $m+2\ge (L/2-a)/(2N+1)\ge L/(5N)+1$. 

The probability that clocks ring in order along a given sequence $(x_i)_{i=0}^k$ vertices before time $t$ is exactly $\bbP(N_t\ge k+1)$, where $N_t$ denotes a Poisson random variable of mean $t$. In order to bound it, we recall the Bennett inequality: for 
$\alpha\ge 1$,
\[\bbP(N_t\ge t\alpha)\le e^{-t(\alpha\log(\alpha)-\alpha+1)},\]
which follows from Markov's inequality applied to the moment generating function $\bbE[e^{xN_t}]=e^{(e^x-1)t}$. Hence,
\begin{align}
\nonumber\left\|P^\bbT_tf_\bbT-P^{(N)}_tf\right\|_\infty&{}\le(2a)^d|\Lambda_{d,N}-\Lambda_{d,N}+T_d-T_d|^{L/(5N)}\bbP(N_t\ge L/(5N))\\
\nonumber&{}\le (2a)^d(N+2)^{2dL/(5N)}e^{-L/(5N)(\ln (L/(5Nt))-1)}\\
\label{eq:PtTZd}&{}\le (2a)^de^{-2L/(5N)}\le e^{-L/(5N)},
\end{align}
since $L\ge (5N)^{5d}t\ge 5e^3tN(N+2)^{2d}$ and $e^{L/(5N)}\ge e^{2dt}\ge e^{2da}\ge (2a)^d$. Then,
\begin{align*}
\left\|P^{(N)}_tf-\mu_\bbT(f)\right\|_\infty&{}\overset{\eqref{eq:PtTZd}}\le e^{-L/(5N)}+\left\|P_t^\bbT f-\mu_\bbT(f)\right\|_\infty\overset{\eqref{eq:dTV}}\le e^{-t(5N)^{5d-1}}+e^{-tN^d/(3d^d)}L^d\\
&{}\le e^{-tN^d}+e^{-t N^d/(3d^d)}(6N)^{5d^2}t^d\le e^{-tN^d/(4d^d)}.\qedhere
\end{align*}
\end{proof}

Thanks to the range of possible values of $L$ in Lemma~\ref{lem:decay}, we conclude that $\mu_\bbT(f)$ has a limit $\mu(f)$ (as the size $L$ of $\bbT$ diverges) and $P_t^{(N)} f$ converges exponentially fast to it in $\ell^\infty$ at rate $N^d/(4d^d)$. In particular, the infinite volume process is ergodic, there is a unique invariant measure and therefore a unique Gibbs measure (see \cite{Liggett05}*{Theorem~IV.2.15}). Moreover, this exponential ergodicity implies (see the remark at the end of \cite{Martinelli99}*{Section 3.5})
\begin{equation}
\label{eq:trelN:upper}
\trel^{(N)}\le 4d^d/N^d.\end{equation}

Furthermore, for any local function $f\colon\Omega\to\bbR$ we have
\begin{align}
\label{eq:trel:small:scales}\var (f)&{}\overset{\eqref{eq:def:trel}}\le \trel^{(N)}\cD^{(N)}(f)\overset{\eqref{eq:def:DN}}=\trel^{(N)}\sum_{x\in\bbZ^d}\mu\left(\var_{x-\Lambda_{d,N}}(f)\right)\\
\nonumber&{}\overset{\eqref{eq:def:trel}}\le \trel^{(N)}\sum_{x\in\bbZ^d}\sup_{\tau\in\Omega_{\bbZ^d\setminus(-\Lambda_{d,N})}}\trel^{-\Lambda_{d,N},\tau}\mu\left(\cD_{x-\Lambda_{d,N}}(f)\right)\\
\nonumber&{}\overset{\eqref{eq:def:D}}=\trel^{(N)}\sup_{\tau}\trel^{-\Lambda_{d,N},\tau}\sum_{x\in\bbZ^d}\mu\left(\sum_{y\in x+\Lambda_{d,N}}\mu_{x-\Lambda_{d,N}}(\var_y(f))\right)\\
&{}\overset{\eqref{eq:def:D}}=\trel^{(N)}\sup_{\tau}\trel^{-\Lambda_{d,N},\tau}|\Lambda_{d,N}|\cD(f).
\end{align}

Using \eqref{eq:def:trel}, \eqref{eq:trel:small:scales}, \eqref{eq:trelN:upper} and Lemma~\ref{lem:trivial}, we get
\[\trel\le \frac{4d^d}{N^d}(N+1)^de^{2(\beta+1)(N+4)^d}\le e^{(\beta+1)m^{C'}}\le Ce^{e^{C\beta}},\]
recalling \eqref{eq:def:N} and $m=\lceil 1/(1-\tanh(\beta))\rceil\ge 2$ and taking suitably large $C',C>0$ depending only on $d$ (and $a_d$).
\end{proof}

\section{Applications to geometric group theory}
\label{ss:applications}

We saw in Section~\ref{sec:intro} how modifying switches to affect lamps can be understood as a game, whose goal is to turn off all lamps by an appropriate sequence of switches. \emph{Combinatorial group theory}, insofar as it is concerned with computations in group presentations, generalises broadly this idea; see in particular the beautiful application by Conway and Lagarias to tiling problems~\cite{Conway90}. 
We start by reviewing the basics.

Consider a group presentation $G=\langle S\mid R\rangle$. This is a compact description of a group $G$ with the property that every element may be represented as a word over $S\cup S^{-1}$, and such that two such representations define the same group element if one word may be obtained from the other by repeated insertion of an word from $R\cup R^{-1}$, and free cancellation $(s\cdot s^{-1}=s^{-1}\cdot s=1)$. For example, $G=\langle x,y\mid x^{-1} y^{-1} x y\rangle$ defines the group $\bbZ^2$; every element may be uniquely represented as a word of the form $x^m y^n$ with $m,n\in\bbZ$.

Consider a word $w$ over $S\cup S^{-1}$ that represents the identity in $G$. The process of reducing $w$ to $1$ by repeated insertions of relators may be visualised in a \emph{van Kampen diagram}: this is a planar graph all of whose edges are oriented and labeled by $S$, all of whose faces read an element of $R\cup R^{-1}$ on their boundary, if edges in reversed orientation are interpreted as $S^{-1}$; and whose external boundary reads $w$. The \emph{weight} of $w$ is the minimal number of faces of a van Kampen diagram for $w$, or equivalently the minimal $n$ such that $w$ may be written in the free group $F_S$ in the form
\begin{equation}\label{eq:Dehn}
w=\prod_{i=1}^n u_i r_i u_i^{-1},\quad r_i\in R\cup R^{-1}.
\end{equation}
The \emph{Dehn function} of $G$ is the function $\Delta(n)$ counting the maximal weight of all words of length at most $n$ that represent the identity. For example, the Dehn function of $\Z^2$, with its presentation given above, is $\Delta(n)=\lfloor (n/4)^2\rfloor$; the weight of $x^{-n}y^{-n}x^n y^n$ is $n^2$ and maximises the weight among words of length at most $4n$.

\begin{figure}
    \centering
    \begin{tikzpicture}[scale=1.2]
  \foreach\i in {1,...,8} {
    \pgfmathparse{\i-1}\foreach\j in {0,...,\pgfmathresult} {
      \draw [yarrow] (\i,8-\j) -- (\i-1,8-\j);
      \draw [xarrow] (8-\j,\i) -- (8-\j,\i-1);
    }
  }
    
  \foreach\i in {0,...,8} {
    \coordinate (a0\i) at ($(-165:0.8) + (0,\i)$);
    \coordinate (b\i0) at ($(-105:0.8) + (\i,0)$);
  }
  \foreach\i/\j in {0/1,1/2,2/3,3/4,4/5,5/6,6/7,7/8} {
    \draw [yarrow] (a0\i) -- (a0\j);
    \draw [xarrow] (b\i0) -- (b\j0);
  }
  \foreach\i in {1,...,7} {
    \coordinate (b0\i) at ($(a0\i) + (0.56,0)$); \draw [aarrow] (a0\i) -- (b0\i);
    \coordinate (c\i0) at ($(b\i0) + (0,0.56)$); \draw [aarrow] (b\i0) -- (c\i0);
    \coordinate (d\i) at ($(\i,8-\i) + (-135:0.3)$); \draw[aarrow] (\i,8-\i) -- (d\i);
  }
  \foreach\i in {1,...,4} {
    \pgfmathparse{\i-1}\foreach\j in {0,...,\pgfmathresult} {
      \draw [yarrow] ($(d4) + (\i-4,-\j-1)$) -- +(0,1);
      \draw [xarrow] ($(d4) + (\i-5,-\j)$) -- +(1,0);
    }
  }
  \draw [xarrow] (b06) -- ($(d2) + (-1,0)$);
  \draw [xarrow] ($(d2) + (-1,0)$) -- +(1,0);
  \draw [yarrow] ($(d2) + (0,-1)$) -- +(0,1);
  \draw [yarrow] ($(d2) + (-1,-1)$) -- +(0,1);
  \draw [xarrow] ($(d2) + (-1,-1)$) -- +(1,0);

  \draw [xarrow] ($(d6) + (-1,0)$) -- +(1,0);
  \draw [xarrow] ($(d6) + (-1,-1)$) -- +(1,0);
  \draw [yarrow] ($(d6) + (-1,-1)$) -- +(0,1);
  \draw [yarrow] (c60) -- ($(d6) + (0,-1)$);
  \draw [yarrow] ($(d6) + (0,-1)$) -- (d6);
  
  \coordinate (y11) at ($(0.9,0.9)$);
  \coordinate (z11) at ($(y11) + (-135:0.3)$);
  \draw [aarrow] (a00) -- (b00) (a08) -- (0,8) (b80) -- (8,0) (y11) -- (z11);
  \draw [xarrow] (b07) -- (d1);
  \draw [xarrow] (b01) -- (z11);
  \draw [yarrow] (c70) -- (d7);
  \draw [yarrow] (c10) -- (z11);
  
  \foreach\i in {1,2,3} {
    \coordinate (e\i) at ($(d4) + (\i-4,0.3)$);
    \draw [aarrow] ($(d4) + (\i-4,0)$) -- (e\i);
    \coordinate (f\i) at ($(d4) + (0.3,\i-4)$);
    \draw [aarrow] ($(d4) + (0,\i-4)$) -- (f\i);
  }
  \draw [yarrow] (e2) -- ($(d2) + (0,-1)$);
  \coordinate (b17) at ($(d2) + (0,-1) + (0.3,0)$);
  \draw [aarrow] ($(d2) + (0,-1)$) -- (b17);
  \draw [xarrow] (b17) -- (d3);
  \draw [yarrow] (e3) -- (d3);
  \draw [aarrow] ($(d2) + (-1,0)$) -- +(0,0.3);
  \coordinate (b16) at ($(d2) + (-1,0.3)$);
  \draw [yarrow] (b16) -- (d1);
  \coordinate (b15) at ($(b05) + (0.7,0)$);
  \draw [xarrow] (b05) -- (b15);
  \draw [aarrow] (b15) -- ($(d2) + (-1,-1)$);
  \draw [yarrow] (e1) -- (b15);
  \coordinate (b13) at ($(b03) + (0.7,0)$);
  \draw [xarrow] (b03) -- (b13);
  \draw [aarrow] (b13) -- ($(d4) + (-3,-1)$);
  \coordinate (b12) at ($(d4) + (-3,-2)$);
  \draw [xarrow] (b02) -- (b12);
  \draw [yarrow] (y11) -- (b12);
  \coordinate (b12e) at ($(b12) + (0,0.3)$);
  \draw [aarrow] (b12e) -- (b12);
  \draw [yarrow] (b12e) -- (b13);
  \coordinate (x2) at ($(d4) + (-2,-2) + (-135:0.3)$);
  \draw [xarrow] ($(d4) + (-3,-2)$) -- (x2);
  \draw [aarrow] (x2) -- ($(d4) + (-2,-2)$);

  \draw [xarrow] (f2) -- ($(d6) + (-1,0)$);
  \coordinate (b71) at ($(d6) + (-1,0.3)$);
  \draw [aarrow] ($(d6) + (-1,0)$) -- (b71);
  \draw [yarrow] (b71) -- (d5);
  \draw [xarrow] (f3) -- (d5);
  \coordinate (b61) at ($(d6) + (0,-1) + (0.3,0)$);
  \draw [aarrow] ($(d6) + (0,-1)$) -- (b61);
  \draw [xarrow] ($(b61)$) -- (d7);
  \coordinate (b51) at ($(c50) + (0,0.7)$);
  \draw [yarrow] (c50) -- (b51);
  \draw [aarrow] (b51) -- ($(d6) + (-1,-1)$);
  \draw [xarrow] (f1) -- (b51);
  \coordinate (b31) at ($(c30) + (0,0.7)$);
  \draw [yarrow] (c30) -- (b31);
  \draw [aarrow] (b31) -- ($(d4) + (-1,-3)$);
  \coordinate (b21) at ($(d4) + (-2,-3)$);
  \draw [yarrow] (c20) -- (b21);
  \draw [xarrow] (y11) -- (b21);
  \coordinate (b21e) at ($(b21) + (0.3,0)$);
  \draw [aarrow] (b21e) -- (b21);
  \draw [xarrow] (b21e) -- (b31);
  \draw [yarrow] ($(d4) + (-2,-3)$) -- (x2);

  \closerel{(c10)}{(z11)}{(b01)}
  \closerel{(y11)}{(b12)}{(b02)}
  \closerel{(b12e)}{(b13)}{(b03)}
  \closerel{(e1)}{(b15)}{(b05)}
  \closerel{(b16)}{(d1)}{(b07)}
  \closerel{(e3)}{(d3)}{(b17)}

  \closerel{(c30)}{(b31)}{(b21e)}
  \closerel{(c50)}{(b51)}{(f1)}
  \closerel{(c70)}{(d7)}{(b61)}
  \closerel{(b71)}{(d5)}{(f3)}
  \closerel{(c20)}{(b21)}{(y11)}
  
  \straightrel{($(d2) + (-1,-1)$)}{(b06)}
  \straightrel{(c60)}{($(d6) + (-1,-1)$)}
  \straightrel{($(d4) + (-3,-1)$)}{(b04)}
  \straightrel{(e2)}{($(d2) + (-1,-1)$)}
  \straightrel{(c40)}{($(d4) + (-1,-3)$)}
  \straightrel{($(d6) + (-1,-1)$)}{(f2)}
  \straightrel{($(d4) + (-1,-3)$)}{($(d4) + (-2,-2)$)}
  \straightrel{($(d4) + (-2,-2)$)}{($(d4) + (-3,-1)$)}
\end{tikzpicture}
    \caption{A van Kampen diagram for the element $a x^8 
    a x^{-8} y^{8} a y^{-8}$ in the group $\Gamma$. Generators $x,y,a$ are respectively shown with single arrow, double arrow and double line.}
    \label{fig:vk}
\end{figure}

The Dehn function of a finitely presented group depends only mildly on the presentation, and is a crude measure of the complexity of the word problem (namely, the problem of testing whether two words define equal elements of the group). For example, some of the most computationally tractable groups, Gromov's \emph{word hyperbolic groups}, may be characterised by the fact that their Dehn function is bounded by a linear function. There exist finitely presented groups with unsolvable word problem, and therefore with uncomputable Dehn function.

There is nothing sacred about the Dehn function, and in fact many other invariants have been considered; see~\cite{Gromov93}*{\S5} for such a collection. An interesting invariant defined there is the \emph{filling length}, also considered by Gersten in~\cite{Gersten95}. The \emph{filling length} of a van Kampen diagram $\mathcal K$ with basepoint $*$ is the minimal $\lambda$ such that there exists a sequence $\gamma_0,\dots,\gamma_n$ of loops at $*$ in $\mathcal K$, such that
\begin{enumerate}
    \item all $\gamma_i$ have length at most $\lambda$;
    \item $\gamma_0$ is the trivial loop at $*$, and $\gamma_n$ is the boundary loop of $\mathcal K$;
    \item $\gamma_i$ and $\gamma_{i+1}$ differ by exactly one cell, which is outside $\gamma_i$ and inside $\gamma_{i+1}$ with winding number $1$
\end{enumerate}
(such a sequence of loops is called a \emph{combinatorial null-homotopy}). As before, one defines the \emph{filling length} of a word $w$ representing the identity as the minimal filling length of van Kampen diagrams for $w$, and the \emph{filling function} $\lambda(n)$ as the maximum of $\lambda(w)$ over all words $w$ of length at most $n$ that represent the identity.

We may refine it as follows: firstly, for a generator $s$, we let $|w|_s$ denote the number of $s^{\pm1}$ in a freely reduced word $w$; then, for a van Kampen diagram $\mathcal K$, we let $\lambda_s(\mathcal K)$ be the minimal number $\lambda$ such that there exists a combinatorial null-homotopy $(\gamma_0,\dots,\gamma_n)$ with $|\gamma_i|_s\le\lambda$ for all $i$, and as before $\lambda_s(w)$ as the minimum of $\lambda_s(\mathcal K)$ over van Kampen diagrams for $w$, and $\lambda_s(n)$ as the maximum of $\lambda_s(w)$ over all words $w$ of length at most $n$ that represent the identity.

Secondly, we denote by $\gamma_i^\circ$ the part of $\gamma_i$ that is not on the boundary of the van Kampen diagram (a ``chord''). For example, van Kampen diagrams in groups admitting a free subgroup of finite index are thin, so they admit null homotopies of bounded length. We let $\lambda^\circ_s(\mathcal K)$ be the minimum, over combinatorial null-homotopies, of $\max_i|\gamma_i^\circ|_s$, and define $\lambda^\circ_s(w)$ and $\lambda^\circ_s(n)$ as above.

We return to the finitely presented group $\Gamma$ of \eqref{eq:Baumslag}. Kassabov and Riley prove in~\cite{Kassabov12} that its Dehn function satisfies $\Delta(n)\precsim n^4$, and note that the actual value $\Delta(n)\approx n^2$ follows from~\cites{Bartholdi08,Drutu04}.

To better understand van Kampen diagrams for $\Gamma$, we note that every word $w$ over $\{a,x^{\pm1},y^{\pm1}\}$ that represents the identity has an associated lamp configuration: if $w=w_1\dots w_n$, start at the origin and read the letters $w_1,\dots,w_n$ in sequence, moving left/right/up/down when the letter is $x^{-1},x,y,y^{-1}$ respectively, and toggling the lamp at the current position when the letter is $a^{\pm1}$. After the last letter has been read, we have returned to the origin. The \emph{configuration diameter} of $w$ is the diameter of the associated lamp configuration, namely, the maximal distance between two ON lamps, in the $\ell^\infty$ metric from~\eqref{eq:ball}.

For example, the word $a x^{-2^n} a x^{2^n}y^{-2^n} a y^{2^n}$ represents the identity in $\Gamma$, and has configuration diameter $2^n$ with three ON lamps. Figure~\ref{fig:vk} shows a van Kampen diagram for this word with $n=3$.

We obtain the following consequence of Theorem~\ref{thm:lower}:
\begin{corollary}
  In the group $\Gamma$, the $a$-filling length function $\lambda^\circ_a(n)$ is bounded from below by $\log_2(n)$.
\end{corollary}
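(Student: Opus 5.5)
The plan is to exhibit, for each $k$, a word $w_k$ of length $O(2^k)$ representing the identity, and to show that any combinatorial null-homotopy of any van Kampen diagram for $w_k$ passes through a loop whose interior chord contains more than about $k$ letters $a^{\pm1}$. A natural candidate is the word $w_k=a\,x^{-2^k}a\,x^{2^k}y^{-2^k}a\,y^{2^k}$ (length $3+4\cdot2^k$), up to the orientation conventions of the paper. Its lamp configuration is a large plaquette, i.e.\ a lone ON lamp at the origin together with two ON lamps at distance $2^k$. A rescaled variant, with the plaquette of side $2^{k'}$ where $3d2^{k}\ll 2^{k'}$, can be used if needed. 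Then only the lamp at the origin lies in the ball $B_k=B(O,3d2^k)$, and the other two lamps are far outside it.

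The key step is to translate a combinatorial null-homotopy $(\gamma_0,\dots,\gamma_n)$ into a chain in the sense of Definition~\ref{def:chain}. Each loop $\gamma_i$ is based at $*$ and, read as a word in $\Gamma$, represents the identity. It therefore has an associated lamp configuration $W_i$, obtained by reading $\gamma_i$ as in the definition above. Passing from $\gamma_i$ to $\gamma_{i+1}$ attaches one cell. If the cell is a commutator $xyx^{-1}y^{-1}$ or $a^2$, the configuration does not change. If it is the relator $a\cdot x^{-1}ax\cdot y^{-1}ay$ (conjugated by the path to the cell), the configuration changes by a translate of the plaquette $T_2$. Hence $(W_i)$ is a chain, after reversing the orientation of coordinates if necessary so that the toggled triple is $x+T_2$. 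Moreover $W_0=\varnothing$ and $W_n$ is the configuration of $w_k$. Toggling by the fixed configuration $W_n$ together with $\{O\}$ gives a chain from $\{O\}$ to a configuration avoiding $B_k$. Reversing it, we apply Theorem~\ref{thm:lower} to get some $i$ with $|(W_i\symdiff W_n\symdiff\{O\})\cap B_k|>k$.

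It remains to bound the number of lamps of $W_i$ in $B_k$ by the number of $a$-letters of the chord $\gamma_i^\circ$. Lamps of $W_i$ can come only from $a$-letters of $\gamma_i$. These letters split into those on the chord $\gamma_i^\circ$ and those on boundary segments of $w_k$. The boundary part of $w_k$ contains only three $a$'s, so it contributes at most $3$ lamps, plus the $O(1)$ correction from $W_n\symdiff\{O\}$ inside $B_k$. Hence $|\gamma_i^\circ|_a\ge k-O(1)$. With $n=|w_k|\approx 4\cdot2^k$, this gives $\lambda_a^\circ(n)\ge\log_2 n-O(1)$.

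The main obstacle is the bookkeeping in two places. The first is matching orientation and coordinates so that relator cells act by genuine plaquettes $x+T_2$ rather than reflected ones; an affine change of lattice handles this. The second is the additive constants. To get exactly $\log_2(n)$ rather than $\log_2 n-C$, I would first try to choose the scale of the large plaquette, together with a shifted $k$, so that the constants are absorbed. Failing that, I would read the corollary as the asymptotic bound $\lambda_a^\circ(n)\succsim\log_2 n$.
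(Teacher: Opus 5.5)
Your proposal is correct and takes the paper's approach: the same word $a x^{-2^k}a x^{2^k}y^{-2^k}a y^{2^k}$, the observation that the loops of a combinatorial null-homotopy give a chain of admissible lamp configurations, and an application of Theorem~\ref{thm:lower}. The extra bookkeeping you supply is exactly what the paper leaves implicit: taking the large plaquette at scale $2^{k'}>3d\,2^k$ so that only the origin lies in $B_k$ (this step is genuinely needed, not optional), reflecting $-T_2$ to $T_2$, taking symmetric difference with $W_n\symdiff\{O\}$ so the chain starts at $\{O\}$, and discounting boundary $a$-letters by their positions. Like the paper's argument, it yields $\log_2 n-O(1)$, i.e.\ the bound in the usual $\succsim$ sense.
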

\begin{proof}
  Consider the word $w=a x^{-2^n} a x^{2^n}y^{-2^n} a y^{2^n}$ mentioned above, which has size $\mathcal O(2^n)$ and represents the identity in $\Gamma$. Let $\mathcal K$ be a van Kampen diagram for $w_n$, and let $(\gamma_0,\dots,\gamma_\Delta)$ be a combinatorial null-homotopy: a sequence of loops in $\mathcal K$, based at the start of $w$, such that $\gamma_0$ is the empty loop, $\gamma_\Delta$ in the peripheral loop reading $w$, and each $\gamma_i$ differs from $\gamma_{i-1}$ by enclosing a single extra cell $C_i$. If $u_i$ is the word read along $\gamma_i$ from the basepoint to the beginning of the relator $r_i$ around $C_i$, we obtain a corresponding expression of $w_n$ as a product of conjugates of relators $r=a x a x^{-1} y a y^{-1}$ and $s=[x,y]$,
  \[w_n=\prod_{i=1}^\Delta u_i r_i u_i^{-1},\quad r_i\in\{r^{\pm1},s^{\pm1}\}.\]
  Consider now a loop $\gamma_i$; the word read along it also represents the identity in $\Gamma$. Consider the associated lamp configuration: follow the $x^{\pm1},y^{\pm1}$ letters of $\gamma_i$, and flip the lamp at the current position when an $a$ is read. This lamp configuration $W_i$ is admissible, and the configurations $(W_0,\dots,W_\Delta)$ form a chain. The claim now follows from Theorem~\ref{thm:lower}.
\end{proof}

In an entirely similar manner, we deduce from Theorem~\ref{th:complexity:2} the following result.
\begin{corollary}
  There exist $\alpha_2>0$ and $n_2\in\N$ such that, for any $n\ge n_2$, there exists $w\in\{a,x^{\pm1},y^{\pm1}\}$ with the following properties:
  \begin{itemize}
      \item it represents the identity in $\Gamma$;
      \item it has configuration diameter at most $n$;
      \item it contains $L$ letters $a$ or $a^{-1}$;
      \item its $a$-filling length function $\lambda_a(w)$ is at least $L+n^{\alpha_2}$.\qed
   \end{itemize}
\end{corollary}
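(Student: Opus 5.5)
The plan is to translate Theorem~\ref{th:complexity:2} (with $d=2$) into a word in $\Gamma$, exactly as the previous corollary translated Theorem~\ref{thm:lower}. Fix $\alpha_2,n_2$ from that theorem and, for $n\ge n_2$, let $X\subseteq\Lambda_{2,n}$ be the admissible set it provides. First I will build a word $w$ whose associated lamp configuration is $X$. Order the points of $X$, say $X=\{p_1,\dots,p_L\}$, and let $w$ travel from the origin to $p_1$ by a path in $x^{\pm1},y^{\pm1}$, read $a$, travel to $p_2$, read $a$, and so on, finally returning to the origin. The configuration is $X$, so the configuration diameter is at most $n$ (or a constant multiple of $n$, since $\Lambda_{2,n}$ has $\ell^\infty$-diameter $n$), and $w$ contains exactly $L=|X|$ letters $a$. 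Since $X$ is admissible, $w$ represents the identity: in $\Gamma=S\rtimes\Z^2$ the element is $(X,0)$, and admissible configurations are exactly the trivial elements of $S$.

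Next comes the lower bound on $\lambda_a(w)$. Take any van Kampen diagram $\mathcal K$ for $w$ and any combinatorial null-homotopy $(\gamma_0,\dots,\gamma_\Delta)$. As in the proof of the previous corollary, each loop $\gamma_i$ yields a lamp configuration $W_i$. Because $\gamma_i$ and $\gamma_{i+1}$ differ by one cell, and cells are relators $r^{\pm1}$ (toggling a translate of the plaquette $T_2$, up to the orientation/reflection convention of the lamp picture) or the commutator $s$ (toggling nothing), the sequence $(W_\Delta,W_{\Delta-1},\dots,W_0)$ is a chain from $X$ to $\varnothing$. Applying Theorem~\ref{th:complexity:2} gives some $t$ with $|W_t|\ge|X|+n^{\alpha_2}=L+n^{\alpha_2}$. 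Finally, $|\gamma_t|_a\ge|W_t|$, since each ON lamp requires at least one occurrence of $a^{\pm1}$ at that position in the freely reduced word read along $\gamma_t$; free reduction only cancels adjacent $aa^{-1}$ pairs at a common position, which preserves parity. Taking the minimum over diagrams and null-homotopies gives $\lambda_a(w)\ge L+n^{\alpha_2}$.

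I expect the main obstacle to be the bookkeeping around conventions, not any new idea. First, the relator $a\cdot x^{-1}ax\cdot y^{-1}ay$ toggles $\{0,e_1,e_2\}$ or its negative depending on the direction convention; if it is $-T_2$, apply Theorem~\ref{th:complexity:2} to the reflected configuration $-X$, which is harmless. Second, since loops $\gamma_i$ need not be reduced words, $|\cdot|_a$ must be shown to dominate the number of ON lamps, which is exactly the parity argument above. Third, $a^2=1$ is itself a relator, and cells labelled $a^2$ toggle the same lamp twice; they contribute the empty set, which Definition~\ref{def:chain} allows.
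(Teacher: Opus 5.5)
Your proposal is correct and is the argument the paper intends. The paper gives no separate proof, only saying the corollary follows ``in an entirely similar manner'' from Theorem~\ref{th:complexity:2}, just as the previous corollary follows from Theorem~\ref{thm:lower}: realise the entangled admissible set $X\subseteq\Lambda_{2,n}$ as the lamp configuration of $w$, read a combinatorial null-homotopy backwards as a chain from $X$ to $\varnothing$, and conclude from $|\gamma_t|_a\ge|W_t|\ge|X|+n^{\alpha_2}$. Your extra bookkeeping fills in details the paper leaves implicit: the $-T_2$ orientation convention, the parity argument under free reduction, and the fact that $a^2$ and commutator cells contribute $\varnothing$ to the chain.
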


We note in passing that the homotopy in the van Kampen diagram contains strictly more information than the chain from Definition~\ref{def:chain}, since it specifies not only which lamps remain ON at an intermediate step of a turning-off, but also in which order and through which lamp switches they are visited. A tight reformulation of Theorem~\ref{thm:lower} in terms of group theory would be homological, rather than homotopical. In particular, a null-homotopy determines a chain, but not conversely.

\section{Applications to ergodic theory and dynamical systems}
\label{sec:Ledrappier}
We start with a brief introduction to the Ledrappier subshift from an ergodic theory perspective. Consider a measure-preserving action of a discrete group $G$ on a probability space $(X,\mu)$, and recall that it is called \emph{$r$-mixing} if for any measurable $A_1,\dots,A_r$ one has
\[\mu\left(\bigcap_{i=1}^r g_i\cdot A_i\right)\to\prod_{i=1}^r\mu(A_i)\text{ as }d(g_i,g_j)\to\infty\;\forall i\ne j;\]
and ``$2$-mixing'' is simply called ``mixing''. The question whether, for $\bbZ$-actions, mixing necessarily implies $r$-mixing for all $r$ is an old open problem in ergodic theory. Ledrappier gave in~\cite{Ledrappier78} an example of mixing $\bbZ^2$-dynamical system which is not $3$-mixing --- this is precisely the system of lamp configurations that are cycles, in our terminology; namely,
\[X=\left\{x\colon\bbZ^2\to\bbF_2\middle| x(m,n)+x(m+1,n)+x(m,n+1)=0\;\forall m,n\right\},\]
with $\mu$ its Haar measure. (To see that it is not $3$-mixing, consider $A_1=A_2=A_3=[x(0,0)=1]$, and $g_1=(0,0),g_2=(2^n,0),g_3=(0,2^n)$. Each of the events $g_i\cdot A_i$ has measure $1/2$, but their intersection is empty.)

In~\cite{Arenas-Carmona08}, Arenas-Carmona, Berend and Bergelson show that the Ledrappier system is ``almost $r$-mixing'' for all $r\ge2$. To explain that notion, let $d_{\mathrm{H}}(C,D)$ denote the Hausdorff distance between two elements or subsets $C,D$ of $\bbZ^2$, and let $d_{\mathrm{H}}(C,\mathscr D)$ denote the minimal distance between $C\subseteq\bbZ^2$ and any element of a collection $\mathscr D$ of subsets of $\bbZ^2$. For $r\ge2$ and $\mathscr M\subseteq G^r$, they define $X$ to be \emph{$r$-mixing modulo $\mathscr M$} if for any measurable $A_1,\dots,A_r$ one has
\[\mu\left(\bigcap_{i=1}^r g_i\cdot A_i\right)\to\prod_{i=1}^r\mu(A_i)\text{ as }d(g_i,g_j)\to\infty\;\forall i\ne j\text{ and }d_{\mathrm{H}}(\{g_1,\dots,g_r\},\mathscr M)\to\infty.\]
Recall from Definition~\ref{def:admissible} that \emph{admissible} configurations are finite sums of translates of the plaquette. In~\cite{Arenas-Carmona08}'s terminology, an admissible configuration with $r$ lamps is called a \emph{special $r$-gon}. Their main result is that, for all $r\ge3$, the Ledrappier subshift is $r$-mixing modulo special $r$-gons. In particular, the Ledrappier shift is $3$-mixing modulo large plaquettes.

This led to a study of the structure of special $r$-gons, in~\cite{Arenas-Carmona08}*{\S7}. The results in Section~\ref{ss:highbarrier} partly answer their questions, as we now show. We first recall:
\begin{theorem}[\cite{Arenas-Carmona08}*{Theorem~7.1}]
  If $X\subset \Z^{2}$ is an admissible set of cardinality $n$, then $X$ is a sum modulo $2$ of at most $n^3$ large plaquettes.
\end{theorem}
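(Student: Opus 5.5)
We argue by strong induction on the cardinality $n=|X|$, in the Laurent-polynomial language of Proposition~\ref{prop:polymom}. We prove the stronger statement that an admissible $X\subset\Z^2$ with $|X|=n$ is a sum modulo $2$ of at most $f(n)\le n^3$ large plaquettes. The case $n=0$ is trivial. For $0<n\le 3$, Lemma~\ref{le:small_admissible} with $d=2$ shows that $X$ is itself a single large plaquette. For the inductive step we want a reduction that replaces $X$ by an admissible set $X'$ with $|X'|<|X|$, at the cost of a controlled number of large plaquettes, roughly $O(n^2)$ per step. Summing these costs over at most $n$ steps then gives $n^3$.

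The reduction is the renormalisation $\renorm$ of \eqref{eq:renorm}, used exactly as in the proof of Lemma~\ref{le:long_sum}. First translate $X$ and divide by the largest common power of $2$ so that not all points of $X$ have both coordinates even. Dividing by a power of $2$ maps large plaquettes to large plaquettes, so the count is unaffected. Then choose $\gamma\in T_2$ as in Claim~\ref{claim:gamma}, and set $Y=\renorm(X-\gamma)$.

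By Lemma~\ref{lem:renorm}, $Y$ is admissible. Moreover $|Y|<|X|$, because each point of $Y$ is witnessed by a distinct point $\tau(y)\in X$, and some parity class $J$ of $X$ receives no witness. By induction, $Y$ is a sum of at most $f(|Y|)$ large plaquettes $2^{j}T_2+v$.

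Each such large plaquette lifts back to a large plaquette $2^{j+1}T_2+2v+\gamma$, whose renormalisation is exactly $2^jT_2+v$; this follows from Frobenius together with the identity in Lemma~\ref{le:rec}. Let $Z$ be the sum of these lifts. Then $X\symdiff Z$ is admissible and lies in the kernel of $\renorm$ after the shift by $\gamma$. In other words, every inverted plaquette $\gamma+2y-T_2$ contains an even number of its points.

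It remains to decompose such a kernel element $X''=X\symdiff Z$ cheaply. Inside each inverted plaquette $\gamma+2y-T_2$, pairs of points differ by a near vector $e_i$ or $e_1-e_2$. The plan is to cancel each such pair using a bounded number of unit plaquettes, which are large plaquettes with $j=0$, so as to reduce to a configuration supported on a coarser sublattice. This should use the fact that an element of $\ker\renorm$ is a sum of plaquettes of types \ref{item:intersection:a}--\ref{item:intersection:b} in Lemma~\ref{lem:intersection}. The term count is then $|X''|\le |X|+3f(|Y|)$ pieces, each handled by $O(1)$ plaquettes.

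\textbf{Main obstacle.} The hard part is the bookkeeping: showing that this cancellation does not blow up the cardinality. The cardinality of $X''$ is only bounded by $|X|+3f(|Y|)$, and a naive recursion $f(n)\le Cf(n-1)+Cn$ gives exponential, not cubic, growth. To prevent this, I would first try to avoid lifting altogether and instead pair points of $X$ directly. Admissibility forces the ``extremal'' points of the Newton polygon (edges with normals $-e_1$, $-e_2$, $e_1+e_2$) to come in configurations that can be matched by one large plaquette, namely the one spanned by an extreme vertex and its neighbours along two edges, at a side length $2^j$ determined by Lucas' theorem. Adding that plaquette should remove at least two points of $X$ and add at most one, so $|X|$ strictly decreases with cost $1$. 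The difficulty, which I expect to be the crux, is guaranteeing that such a cardinality-decreasing large plaquette always exists. Failing that, one should find one that decreases a secondary potential, such as the area of the Newton polygon, which is bounded by $O(n^2)$. Using at most $n$ strict decreases of $|X|$ and $O(n^2)$ potential steps between them yields the $n^3$ bound.
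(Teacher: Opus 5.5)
Your proposal does not prove the statement. Both routes stop exactly where the cubic count would have to come from, and the step you hope will close the argument is false in general.

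\textbf{The second route.} At every stage you want a large plaquette $L$ with $|L\cap X|\ge 2$, built from a corner of the Newton polygon with side length $2^j$ forced by Lucas' theorem. Such an $L$ need not exist. If it always did, iterating would write every admissible $X$ as a sum of at most $|X|$ large plaquettes, i.e.\ $h(n)\le n$, contradicting \eqref{eq:h}.

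Concretely, the $\rho$-separated admissible sets of Corollary~\ref{cons:set_exists} contain no two points differing by a long vector. So every large plaquette meets them in at most one point, and no large plaquette decreases $|X|$.

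Lucas' theorem only pins down the side length when $|X|=3$. For $(1+x_1+x_2)(1+x_1+x_1^2)=1+x_1^3+x_2+x_1x_2+x_1^2x_2$, the corner $(0,0)$ and its neighbours $(3,0)$, $(0,1)$ along the bottom and left edges do not form a large plaquette.

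The fallback potential fails too. The area of the Newton polygon is not bounded in terms of $n$: $2^kT_2$ has three points and area $2^{2k-1}$, and normalising by powers of $2$ does not help, e.g.\ for $2^kT_2\symdiff(T_2+v)$. You also give no reason why a potential-decreasing move exists without $|X|$ growing.

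\textbf{The first route.} This is the more natural route; the paper, which only quotes this theorem from \cite{Arenas-Carmona08}, remarks that $\renorm$ already appears in its original proof. But the obstacle you name there is not the real one.

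Every lift $2^{j+1}T_2+2v+\gamma$ lies in the coset $\gamma+2\Z^2$. So $X''=X\symdiff Z$ agrees with $X$ off that coset. Each point of $X''$ in $\gamma+2\Z^2$ must therefore be paired, inside its inverted plaquette (these are pairwise disjoint), with a point of $X$. Hence $|X''|\le 2|X|$, however large $f(|Y|)$ is.

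The actual gap comes next. Cancelling the pairs with unit plaquettes leaves an admissible set on $\gamma+(1,1)+2\Z^2$, that is, a rescaled admissible set $W$ that must itself be decomposed. Let $a_{00}$ and $a_{11}$ be the numbers of points of $X$ in $\gamma+2\Z^2$ and $\gamma+(1,1)+2\Z^2$. This gives $f(n)\le f(|Y|)+f(|W|)+n$ with $|Y|\le n-a_{11}$ and $|W|\le n-a_{00}$. That is a two-branch recursion, which by itself yields at best an exponential bound, and possibly no progress at all when $a_{00}=0$.

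To reach $n^3$ you need an extra estimate coupling the two branches. For instance, $|Y|+|W|\le n$ with $|Y|,|W|<n$ would suffice, by convexity of $t\mapsto t^3$. Nothing in the proposal provides such an estimate.

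Finally, Claim~\ref{claim:gamma} is only proved under the Case~3 hypotheses of Lemma~\ref{le:long_sum}. To get $|Y|<|X|$ it suffices to choose the shift so that $X$ meets $\gamma+(1,1)+2\Z^2$.
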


Let us concentrate on $d=2$. Let $h(n)$ denote the minimal number of large plaquettes sufficient to represent any admissible set of cardinality at most $n$; so $h(n)\le n^3$. In \cite{Arenas-Carmona08}*{Remark 7.2}, the authors ask for lower bounds on $h(n)$ and construct examples which, conjecturally, give $h(n)\ge 3n-8$. Our construction proves a stronger lower bound:
\begin{theorem}[The function $h(n)$ grows superlinearly] For some $\eta>0$ and any $n$ large enough, we have
  \begin{equation}\label{eq:h}
  h(n)\geq n^{1+\eta}.
  \end{equation}
\end{theorem}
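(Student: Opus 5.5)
We derive the bound from Theorem~\ref{th:complexity:2} with $d=2$. Let $X\subseteq\Lambda_{2,n}$ be the admissible set it provides. By Corollary~\ref{cons:set_exists}, $|X|\le\binom{n+1}{1}+1=n+2$. Suppose $X=P_1\symdiff\cdots\symdiff P_h$ with $h=h(|X|)$ large plaquettes $P_j$. The plan is to turn this representation into a chain from $X$ to $\varnothing$ whose intermediate configurations never exceed $|X|+O(h\log n)$ ON lamps. Theorem~\ref{th:complexity:2} forces some intermediate configuration to have at least $|X|+n^{\alpha_2}$ lamps. Hence $h\log n\gtrsim n^{\alpha_2}$, that is, $h(n+2)\ge n^{\alpha_2}/C\log n$.

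\textbf{Building the chain.} Remove the large plaquettes one at a time: $X_j=X\symdiff P_1\symdiff\cdots\symdiff P_j$, so $|X_j|\le|X|+3j$. Each step $X_{j-1}\to X_j$ must be realised by ordinary plaquette moves. A large plaquette $2^kT_2$ is the sum of the plaquettes of a Sierpiński pattern, and Theorem~\ref{thm:lower} and the remark after it indicate that it can be toggled while creating only $O(k)$ extra lamps at any time. This follows inductively, in the spirit of Figure~\ref{fig:Sierpinski}: toggling $2^{k}T_2$ amounts to toggling three copies of $2^{k-1}T_2$ in sequence. One must also check that the large plaquettes involved can be taken with $2^k\lesssim n$, so that $k=O(\log n)$; this should follow by bounding the diameter of an optimal representation, or can simply be built into the definition of $h$. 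Intermediate configurations then have at most $|X|+3h+O(\log n)$ lamps, which is much better than $h\log n$.

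\textbf{Getting a superlinear exponent.} The bound above only gives $h\ge n^{\alpha_2}/C$, which is sublinear. This is the main obstacle: we need an entangled configuration with \emph{few} lamps relative to its barrier, namely barrier at least $|X|^{1+\eta}$. I would therefore redo Corollary~\ref{cons:set_exists} and Theorem~\ref{th:complexity:2} with the barrier measured against $|X|$ rather than $n$. Theorem~\ref{th:complexity:2} actually gives the stronger conclusion $|X_t\setminus X|\ge n^{\alpha_2}+\tfrac32|X\setminus X_t|$, and the admissible separated subset found by Lemma~\ref{le:subset_adm} can be far smaller than $n$.

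\textbf{Amplification.} The approach I would try first is to take disjoint, widely separated translates of such a gadget. Alternatively, one can exploit that the ordering cost must be paid repeatedly: in the chain from large plaquettes, each $X_j$ differs from $X$ by many points, and the term $\tfrac32|X\setminus X_t|$ penalises partial progress. The goal is a counting argument showing that any ordering of $h$ large plaquettes passes through a configuration with at least $|X|+cn^{\alpha_2}$ excess lamps while $|X|\le n^{1-\alpha_2/2}$, which yields $h\ge n^{\alpha_2}\ge|X|^{1+\eta}$. If instead $|X|$ stays of order $n$, the approach fails, and one needs an entangled set of sublinear size. To obtain one, I would apply the greedy separated construction only inside a lower-dimensional sub-simplex, using the strict inequality slack in Lemma~\ref{le:size_rho}.
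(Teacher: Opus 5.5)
There is a genuine gap: you never obtain the superlinear exponent, which is the whole content of the statement, and the routes you sketch for it do not work. Your chain argument via Theorem~\ref{th:complexity:2} gives at best $h\ge cn^{\alpha_2}$ for a set of size up to $n+2$, and $\alpha_2$ is very small. Even that step uses an unjustified bound $2^k\le Cn$ on the scales of the large plaquettes. That bound cannot be ``built into the definition of $h$'', because $h$ minimises over all representations.

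Amplifying by widely separated translates cannot help inside the chain framework. An energy barrier is a maximum over time, not a sum over space. A union of far-apart copies can be cleared one copy at a time with essentially the barrier of a single copy, and the term $\tfrac32|X\setminus X_t|$ does not change this.

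Your fallback arithmetic is also wrong. $|X|\le n^{1-\alpha_2/2}$ together with $h\ge n^{\alpha_2}$ gives $h\ge|X|^{1+\eta}$ only if $\alpha_2\ge(1-\alpha_2/2)(1+\eta)$, i.e.\ $\alpha_2>2/3$. What you would really need is a configuration whose barrier exceeds $|X|^{1+\eta}$, and the paper's tools point the other way. Lemma~\ref{le:long_sum} applied to the $r$-separated admissible $X$ itself gives $|X|\ge 1+2\lambda_2^{1-r}$. This is far larger than the barrier of order $\lambda_2^{1-r/3}$ that Theorem~\ref{th:complexity:2} certifies. A one-dimensional sub-simplex does not help either: there admissible sets are just sets of even size, so nothing entangled lives there.

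The missing idea is to drop chains altogether and use the fact that a representation by large plaquettes, unlike a chain, localises and is additive over space. Here is the argument.
\begin{itemize}
\item Take the $r$-separated admissible set $X=\{x_1,\dots,x_m\}$ from Corollary~\ref{cons:set_exists}, with $r\ge c\ln n$ and $|X|\le n$. Repeatedly replace it by $X\sqcup(X+v)$, with $v$ at $\rho$-distance at least $r$ from $X-X$, until $m\ge n/2$. Amplification works here precisely because the count is additive.
\item Given $X=P_1\symdiff\cdots\symdiff P_h$, let $M_i$ be the set of $P_j$ having a vertex at $\rho$-distance $<r/3$ from $x_i$. Large plaquettes have $\rho$-diameter $1$, so the $M_i$ are pairwise disjoint once $r\ge3$.
\item The sum $X_i$ of the plaquettes in $M_i$ is admissible. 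It agrees with $X$ on the $\rho$-ball of radius $r/3$ around $x_i$, since every large plaquette meeting that ball lies in $M_i$. Hence $x_i\in X_i$, and all other points of $X_i$ are at $\rho$-distance at least $r/3$ from $x_i$.
\item Lemma~\ref{le:long_sum} then gives $|X_i|\ge 2\lambda_2^{1-r/3}\ge n^{\eta'}$, so $|M_i|\ge|X_i|/3$.
\item Summing over the $m\ge n/2$ disjoint families gives $h\ge m\,n^{\eta'}/3\ge n^{1+\eta}$.
\end{itemize}
This is the paper's proof. It applies Lemma~\ref{le:long_sum} at every point of $X$ at once. The proof of Theorem~\ref{th:complexity:2} can only use it at the first point where the chain behaves unusually, and that is exactly where your route loses the factor $m$.
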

\begin{proof}
By Corollary~\ref{cons:set_exists}, there exists a nonempty admissible configuration
\[
X=\{x_1,x_2,\dots,x_m\}
\]
of at most $n$ points such that any two distinct points have $\rho$-distance at least $r$, where $r\geq c'\ln(n-2)\ge 15+c\ln n$ for some positive constants $c,c'$, as long as $n$ is large enough. Up to repeatedly replacing $X$ by $X\sqcup(X+v)$ for some $v\in\Z^2$ at $\rho$-distance at least $r$ from $X-X$, we may assume $m\ge n/2$.

Since $X$ is admissible, write it as the sum of at most $h(n)$ large plaquettes. For every $i=1,\dots,m$, let $M_i$ be the set of those large plaquettes in the sum that have at least one vertex at $\rho$-distance $<r/3$ from $x_i$. If $r\geq3$, then the sets $M_i$ are pairwise disjoint.

Let $X_i$ be the sum modulo $2$ of all large plaquettes from $M_i$. All other points of $X_i$ are at $\rho$-distance at least $r/3$ from $x_i$.  Applying Lemma~\ref{le:long_sum} to $X_i$ and the point $x_i$, we obtain
\[
|X_i|\geq \frac34\left(\frac32\right)^{r/3}
\geq 4 n^{\eta},
\qquad
\eta\coloneqq\frac{c}{3}\ln\left(\frac32\right)>0.
\]
This implies
\[
|M_i|\geq 2 n^{\eta}.
\]
Summing over $i=1,\dots,m$ and using the disjointness of the sets $M_i$ gives $h(n)\ge 2m\,n^\eta\ge n^{1+\eta}$, as claimed.
\end{proof}

\section*{Acknowledgments}
L.\ B.\ and I.\ M.\ gratefully acknowledge support from the SNSF Advanced Grant TMAG-2\_216487/1.

We thank Paul Chleboun, Fabio Martinelli and Marius Tiba for helpful discussions, as well as Guillaume Aubrun for sparking this collaboration. For open access purposes, the authors have applied a CC BY public copyright license to any author-accepted manuscript version arising from this submission.

\bibliographystyle{plain}
\bibliography{Bib}
\end{document}